\newtheorem{theorem}{Theorem}
\newtheorem{conj}[theorem]{Conjecture}
\newtheorem{lemma}[theorem]{Lemma}
\theoremstyle{definition}
\theoremstyle{remark}
\newtheorem{rem}{Remark}
\numberwithin{equation}{section}
\numberwithin{theorem}{section}
\numberwithin{defn}{section}
\newcommand{\tr}{\mathrm{tr}}
\newcommand{\diag}{\mathrm{diag}}
\begin{document}
\title[Proofs of Mizuno's Conjectures on Nahm Sums]{Proofs of Mizuno's Conjectures on Generalized Rank Two Nahm Sums}

\author{Boxue Wang and Liuquan Wang}
\address{School of Mathematics and Statistics, Wuhan University, Wuhan 430072, Hubei, People's Republic of China}
\email{boxwang@whu.edu.cn}
\address{School of Mathematics and Statistics, Wuhan University, Wuhan 430072, Hubei, People's Republic of China}
\email{wanglq@whu.edu.cn;mathlqwang@163.com}

\subjclass[2010]{11P84, 33D15, 33D60, 11F03}

\keywords{Rogers-Ramanujan type identities; Nahm sums; sum-product identities; vector-valued modular forms}

\begin{abstract}
 Recently, Mizuno studied generalized Nahm sums associated with symmetrizable matrices. He provided 14 sets of candidates of modular Nahm sums in rank two and justified four of them. We prove the modularity for eight other sets of candidates and present conjectural formulas for the remaining two sets of candidates. This is achieved by finding Rogers-Ramanujan type identities associated with these Nahm sums. We also prove Mizuno's conjectural modular transformation formula for a vector-valued function consists of Nahm sums. Meanwhile, we find some new non-modular identities for some other Nahm sums associated with the matrices in Mizuno's candidates.
\end{abstract}

\maketitle

\section{Introduction}\label{sec-intro}
An important problem in the theory of $q$-series and modular forms is to understand the modularity of certain $q$-hypergeometric series. In a series of papers, Nahm \cite{Nahm1994,Nahmconf,Nahm2007} considered the particular series:
\begin{align}\label{eq-Nahm}
    f_{A,b,c}(q):=\sum_{n \in \mathbb{N}^r}\frac{q^{\frac{1}{2}n^\mathrm{T}An+n^\mathrm{T}b+c}}{(q;q)_{n_1}...(q;q)_{n_r}}.
\end{align}
where $A\in \mathbb{Q}^{r\times r}$ is a positive definite matrix, $b \in \mathbb{Q}^r$ is a rational (column) vector, $c \in \mathbb{Q}$ and the shifted $q$-factorial is defined by
\begin{align}
    (a;q)_0:=1 , \quad(a;q)_n:=\prod_{k=0}^{n-1} (1-aq^k).
\end{align}
The series $f_{A,b,c}(q)$ is usually referred as a rank $r$ Nahm sum. Nahm proposed the following problem: determine all triples $(A,b,c)$ such that $f_{A,b,c}(q)$ is modular. We shall call such $(A,b,c)$ as a \textit{modular triple}. 

Nahm's motivation comes from physics and such modular Nahm sums are expected to be characters of rational conformal field theories. He made a conjecture providing a criterion for the matrix $A$ such that there exist $b$ and $c$ so that $f_{A,b,c}(q)$ is modular. 

Zagier \cite{Zagier} studied Nahm's problem systematically. He confirmed Nahm's conjecture in the rank one case by showing that there exist exactly seven modular triples:
\begin{align}
&\left(1/2,0,-1/40\right), \quad \left(1/2, 1/2, 1/40\right), \quad \left(1,0,-1/48\right), \quad \left(1,1/2,1/24\right), \nonumber \\
&\left(1,-1/2,1/24\right), \quad \left(2,0,-1/60\right), \quad \left(2,1,11/60\right).
\end{align}
In particular, the last two triples correspond to the famous Rogers-Ramanujan identities:
\begin{align}\label{RR}
    \sum_{n\ge 0}\frac{q^{n^2}}{(q;q)_n} = \frac{1}{(q,q^4;q^5)_\infty} , \quad 
    \sum_{n\ge 0}\frac{q^{n^2+n}}{(q;q)_n} = \frac{1}{(q^2,q^3;q^5)_\infty}.
\end{align}
Here and throughout this paper, we use standard $q$-series notation:
\begin{align}
 &(a;q)_\infty:=\prod_{k=0}^{\infty} (1-aq^k), \quad |q|<1,
    \\
&(a_1,\dots,a_m;q)_n:=(a_1;q)_n \cdots (a_m;q)_n, \quad n \in \mathbb{N} \cup \{\infty\} .
\end{align}

When the rank $r\geq 2$, Nahm's conjecture is known to be false in general. Vlasenko and Zwegers \cite{VZ} provided a counterexample when $r=2$. Nevertheless, Calegari, Garoufalidis and Zagier \cite{CGZ} proved that one direction of Nahm's conjecture is true. Zagier \cite{Zagier} provided a number of possible rank two and rank three modular triples. They have all been confirmed by Zagier \cite{Zagier}, Vlasenko and Zwegers \cite{VZ}, Cherednik and Feigin \cite{Feigin}, Wang \cite{Wang-rank2,Wang-rank3}, Cao, Rosengren and Wang \cite{CRW}, etc.

Recently, Mizuno \cite{Mizuno} considered generalized Nahm sums associated with symmetrizable matrices given in the following form:
\begin{align}
   \widetilde{f}_{A,b,c,d}(q):= \sum_{n \in \mathbb{N}^r} \frac{q^{\frac{1}{2}n^\mathrm{T}ADn+n^\mathrm{T}b+c}}{(q^{d_1};q^{d_1})_{n_1}...(q^{d_N};q^{d_N})_\infty}.
\end{align}
Here compared with the original Nahm sum defined in \eqref{eq-Nahm}, we have a new input $d=(d_1,\dots,d_r)\in \mathbb{Z}_{>0}^r$,  $A \in \mathbb{Q}^{r \times r} $ is a symmetrizable matrix with the symmetrizer $D := \mathrm{diag}(d_1,\dots, d_r)$ such that $AD$ is symmetric positive definite, $b \in \mathbb{Q}^r$ is a vector and  $c \in \mathbb{Q}$ is a scalar.  Such generalized Nahm sums appeared frequently in the literature and have applications in partition identities or affine Lie algebras. For example, one of Capparelli's partition identities \cite{Capparelli}, which is discovered by the theory of affine Lie algebras, states that
\begin{align}\label{eq-Capparelli}
\sum_{n_1,n_2\geq 0} \frac{q^{2n_1^2+6n_1n_2+6n_2^2}}{(q;q)_{n_1}(q^3;q^3)_{n_2}}=(-q^2,-q^3,-q^4,-q^6;q^6)_\infty.
\end{align}
This identity implies that the generalized Nahm sum $\widetilde{f}_{A,b,c,d}(q)$ associated with $A=\left(\begin{smallmatrix}
4 & 2 \\ 6 & 4
\end{smallmatrix}\right)$, $b=(0,0)^\mathrm{T}$, $c=-1/24$ and $d=(1,3)$ is modular. For convenience, when $\widetilde{f}_{A,b,c,d}(q)$ is modular we call $(A,b,c,d)$ a \textit{modular quadruple}.

Mizuno \cite{Mizuno} found that the theory of Nahm sums for symmetric matrices can be analogously applied to Nahm sums for symmetrizable matrices. For example, it seems that there are some duality principles in both cases. Zagier \cite{Zagier} conjectured the following duality between modular triples: when $(A,b,c)$ is a rank $r$ modular triple, then it is likely that
\begin{align}
(A^\star, b^\star, c^\star)=(A^{-1},A^{-1}b,\frac{1}{2}b^\mathrm{T} A^{-1}b-\frac{r}{24}-c)
\end{align}
is also a rank $r$ modular triple.  Similarly, Mizuno conjectured the following duality between modular quadruples: suppose $(A,b,c,d)$ is a modular quadruple, then $(A^*,b^*,c^*,d^*)$ is also modular quadruple where
\begin{align}
    A^*=A^{-1}, \quad b^*=A^{-1}b, \quad c^*=\frac{1}{2}b^\mathrm{T} (AD)^{-1}b-\frac{\tr D}{24}-c, \quad d^*=d.
\end{align}

Based on a numerical method from \cite{Zagier}, Mizuno \cite{Mizuno}  searched for modular quadruples $(A,b,c,d)$ when the rank $r=2,3$. He provided two lists of candidates of modular quadruples. They consist of 14 and 34 different matrices for the rank two and three cases, respectively.
This paper is devoted to  justify the modularity of Mizuno's candidates in the rank two case. The rank three case will be discussed in a forthcoming paper.

For convenience, we will call the 14 sets of candidates as Examples 1-14 according to their order of appearances in \cite[Table 1]{Mizuno}. Each example consists of a unique matrix $A$ and a vector $d$ but several choices of vectors $b$.  As a consequence of Mizuno's duality conjecture, we can classify these examples into seven groups by grouping $A$ with $A^*$ together.

Among Mizuno's 14 rank  two modular examples, four of them have been confirmed by Mizuno \cite{Mizuno} or some known identities in the literature. For instance, the example with $A=\left(\begin{smallmatrix}
4 & 2 \\ 6 & 4
\end{smallmatrix}\right)$ are justified by Capparelli's identity \eqref{eq-Capparelli}.  The examples with $A=\left(\begin{smallmatrix}
3 & 2 \\ 4 & 4
\end{smallmatrix}\right)$ and $\left(\begin{smallmatrix}
3/2 & 1/2 \\ 1 & 1
\end{smallmatrix}\right)$  were proved by Mizuno \cite{Mizuno} after finding corresponding Rogers-Ramanujan type identities. 

Mizuno also gave several comments for some of the remaining examples. For instance, though Mizuno did not justify the example with
\begin{align}\label{exam-2}
        A= \begin{pmatrix}
            1 & 1/2\\
            1 & 1
        \end{pmatrix}, \quad
        b \in \bigg\{
        \begin{pmatrix}
            0 \\ 0
        \end{pmatrix},
        \begin{pmatrix}
            0 \\ 1
        \end{pmatrix},
        \begin{pmatrix}
            1 \\ 1
        \end{pmatrix}
        \bigg\}, \quad
        d=(1,2), 
    \end{align}
he investigated the corresponding Nahm sums with certain restrictions. Specifically speaking, for $\sigma \in \{0,1\}$ we define
\begin{align}
F_\sigma(u,v;q):=\sum_{\begin{smallmatrix}
n_1\equiv \sigma \!\! \pmod{2} \\ n_1,n_2\geq 0 
\end{smallmatrix}} \frac{q^{\frac{1}{2}n_1^2+n_1n_2+n_2^2}u^{n_1}v^{n_2}}{(q;q)_{n_1}(q^2;q^2)_{n_2}}. \label{Fc-defn}
\end{align}
Let $q=e^{2\pi i\tau}$ where $\mathrm{Im} \tau>0$ and we denote $\zeta_N=e^{2\pi i/N}$ throughout this paper.  Mizuno proposed the following 
\begin{conj}\label{conj-Mizuno}
(Cf.\ \cite[Eq.\ (45)]{Mizuno}.) The following modular transformation formula holds:
\begin{align}\label{eq-conj-tran}
   U\left(-\frac{1}{\tau}\right)=\begin{pmatrix}
M & M \\ M & -M
    \end{pmatrix} U\left(\frac{\tau}{2}\right)
\end{align}
where
\begin{align}
U(\tau)=&\Big(q^{-3/56}F_0(1,1;q),q^{1/56}F_1(1,q;q),q^{9/56}F_1(q,q;q), \\
&q^{-3/56}F_1(1,1;q),q^{1/56}F_0(1,q;q),q^{9/56}F_0(q,q;q)  \Big)^\mathrm{T} \nonumber
\end{align}
and 
\begin{align}\label{eq-M-defn}
M=\begin{pmatrix}
\alpha_3 & \alpha_2 & \alpha_1 \\
\alpha_2 & -\alpha_1 & -\alpha_3 \\
\alpha_1 & -\alpha_3 & \alpha_2
\end{pmatrix}, \quad \alpha_k=\sqrt{\frac{2}{7}} \sin \frac{k\pi}{7}.
\end{align}
\end{conj}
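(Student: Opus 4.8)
The plan is to reduce the claimed six-dimensional $S$-transformation to the classical transformation laws of one-variable theta and eta functions, by first putting each component of $U(\tau)$ into closed form. The guiding observation is that the entries $\alpha_k=\sqrt{2/7}\sin(k\pi/7)$ of $M$ in \eqref{eq-M-defn} are precisely the entries of the modular $S$-matrix attached to the modulus $7$ Andrews--Gordon (Rogers--Ramanujan) characters; this predicts in advance that the functions $F_\sigma(u,v;q)$ occurring in $U$ should admit Rogers--Ramanujan type product/theta evaluations of modulus $7$. Accordingly, the first and most important step is to \emph{find and prove} such identities: for each specialization $(u,v)\in\{(1,1),(1,q),(q,q)\}$ and each $\sigma\in\{0,1\}$, to express $F_\sigma(u,v;q)$ as an explicit eta/theta quotient. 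A natural starting point is to rewrite the quadratic form as $\tfrac12 n_1^2+n_1n_2+n_2^2=\tfrac12(n_1+n_2)^2+\tfrac12 n_2^2$, substitute $m=n_1+n_2$, and collapse the inner sum (which is a rank one $A=1$ Nahm sum) via a Bailey-pair or functional-equation argument, reducing the double sum to a single modulus $7$ Andrews--Gordon sum whose product side is known.

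The second step accounts for the block structure $\left(\begin{smallmatrix} M & M \\ M & -M\end{smallmatrix}\right)$, which factors as $\left(\begin{smallmatrix}1&1\\1&-1\end{smallmatrix}\right)\otimes M$. This Hadamard factor is forced purely by the parity restriction $n_1\equiv\sigma$: writing $\mathbf{1}[n_1\equiv\sigma \bmod 2]=\tfrac12(1+(-1)^{n_1-\sigma})$ exhibits $F_0$ and $F_1$ as the sum and difference of the unrestricted sum and the sum with $u\mapsto -u$, so that the pairing of the top three and bottom three components of $U$ (which differ exactly by the swap $\sigma\mapsto 1-\sigma$ at fixed $(u,v)$ and prefactor) is governed by $\left(\begin{smallmatrix}1&1\\1&-1\end{smallmatrix}\right)$. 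The same decomposition explains the halving $\tau\mapsto\tau/2$: the mixed moduli $q$ and $q^2$ in the denominators $(q;q)_{n_1}(q^2;q^2)_{n_2}$ make the natural modular variable live at level $2$, so that applying $\tau\mapsto-1/\tau$ to the level $2$ building blocks lands the image at $\tau/2$ rather than at $\tau$.

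With the closed forms in hand, the third step is to apply $\eta(-1/\tau)=\sqrt{-i\tau}\,\eta(\tau)$ together with the Jacobi theta $S$-transformation. Since the numerators of the Andrews--Gordon characters are \emph{antisymmetric} (odd) theta functions, their $S$-transformation produces level $7$ Gauss sums whose evaluation yields $\sin(k\pi/7)$ rather than $\cos(k\pi/7)$, giving exactly the $\alpha_k=\sqrt{2/7}\sin(k\pi/7)$. Matching the resulting $3\times3$ coefficient matrix against $M$ — after the correct reordering of the three characters and the phase bookkeeping coming from the prefactors $q^{-3/56},q^{1/56},q^{9/56}$ that fixes the signs in $M$ — together with the Hadamard factor from Step 2 completes the identification and establishes \eqref{eq-conj-tran}.

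The main obstacle is Step 1: discovering and rigorously proving the six Rogers--Ramanujan type evaluations, which is where essentially all of the difficulty resides and which is the heart of this paper's method. Once those identities are secured, the transformation itself is a routine, if bookkeeping-heavy, consequence of the theta and eta transformation formulas; the only delicate points there are choosing the ordering of the modulus $7$ characters and tracking the fractional-power phases so that the precise signs appearing in $M$ emerge correctly.
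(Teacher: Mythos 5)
Your proposal follows essentially the same route as the paper: the paper first proves exactly the six product evaluations you call for (Theorem \ref{thm-parity}, obtained by summing over $n_2$ with Euler's identity and matching the resulting single sums against Slater's list, rather than via your $m=n_1+n_2$/Bailey-pair reduction), and then derives the transformation from the classical theta and Weber--eta transformation laws, with the $\left(\begin{smallmatrix}1&1\\1&-1\end{smallmatrix}\right)$ block structure coming from the parity decomposition $F_0\pm F_1$, just as you describe. The one ingredient you gloss over is that identifying the combinations $U_j+U_{j+3}$ with single theta quotients (needed to match the transformed side against $M\,U(\tau/2)$) rests on the unrestricted identities \eqref{t1-3-1}--\eqref{t1-3-3} of Li and Wang, which the paper quotes; this is a technical step inside the same architecture rather than a different method.
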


In this paper, we will discuss Mizuno's modular examples one by one. We will prove eight of them and provide conjectural formulas for the rest. For example, we find modular product representations for each of the components in $U(\tau)$.
\begin{theorem}\label{thm-parity}
We have
\begin{align}
F_0(1,1;q)&=\frac{(-q;q^2)_\infty(q^{12},q^{16},q^{28};q^{28})_\infty}{(q^2;q^2)_\infty},   \label{r1} \\
F_1(1,1;q)&=q^{\frac{1}{2}}\frac{(-q^2;q^2)_\infty(-q,q^6,-q^7;-q^7)_\infty}{(q^2;q^2)_\infty}, \label{r2} \\
F_0(1,q;q)&=\frac{(-q^2;q^2)_\infty(-q^3,q^4,-q^7;-q^7)_\infty}{(q^2;q^2)_\infty}, \label{r3} \\
F_1(1,q;q)&=q^{\frac{1}{2}}\frac{(-q;q^2)_\infty(q^8,q^{20},q^{28};q^{28})_\infty}{(q^2;q^2)_\infty}, \label{r4} \\
F_0(q,q;q)&=
\frac{(-q^2;q^2)_\infty(q^2,-q^5,-q^7;-q^7)_\infty}{(q^2;q^2)_\infty}, \label{r5} \\
F_1(q,q;q)&=
q^{\frac{3}{2}}\frac{(-q;q^2)_\infty(q^4,q^{24},q^{28};q^{28})_\infty}{(q^2;q^2)_\infty}.\label{r6}
\end{align}
\end{theorem}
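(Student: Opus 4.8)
The plan is to collapse each double sum to a single-fold series by performing the $n_2$-summation in closed form, and then to recognise the resulting one-dimensional sum as a Rogers--Ramanujan--Slater identity. Isolating the $n_2$-dependence of the summand, the inner sum is
\begin{equation}
\sum_{n_2\ge 0}\frac{q^{n_2^2+n_1n_2}v^{n_2}}{(q^2;q^2)_{n_2}}=\sum_{n_2\ge 0}\frac{(q^2)^{\binom{n_2}{2}}\big(q^{\,n_1+1}v\big)^{n_2}}{(q^2;q^2)_{n_2}}=\big(-q^{\,n_1+1}v;q^2\big)_\infty,
\end{equation}
where the middle step uses $q^{n_2^2}=q^{n_2}(q^2)^{\binom{n_2}{2}}$ and the last is Euler's identity $\sum_{n\ge0}a^{\binom{n}{2}}w^n/(a;a)_n=(-w;a)_\infty$ with $a=q^2$. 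Consequently
\begin{equation}
F_\sigma(u,v;q)=\sum_{\substack{n_1\ge 0\\ n_1\equiv\sigma\!\!\pmod 2}}\frac{q^{\frac12 n_1^2}u^{n_1}}{(q;q)_{n_1}}\big(-q^{\,n_1+1}v;q^2\big)_\infty .
\end{equation}

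Next I would split according to the parity constraint. For $\sigma=0$ put $n_1=2m$ and factor $(-q^{2m+1}v;q^2)_\infty=(-qv;q^2)_\infty/(-qv;q^2)_m$; for $\sigma=1$ put $n_1=2m+1$, extract the prefactor $q^{1/2}u$, and factor $(-q^{2m+2}v;q^2)_\infty=(-q^2v;q^2)_\infty/(-q^2v;q^2)_m$. Using $(q;q)_{2m}=(q;q^2)_m(q^2;q^2)_m$ together with $(q;q^2)_m(-q;q^2)_m=(q^2;q^4)_m$ and $(q^2;q^2)_m(-q^2;q^2)_m=(q^4;q^4)_m$, each specialisation $(u,v)\in\{(1,1),(1,q),(q,q)\}$ collapses to a single clean series. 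For example,
\begin{equation}
F_0(1,1;q)=(-q;q^2)_\infty\sum_{m\ge0}\frac{q^{2m^2}}{(q^2;q^2)_m(q^2;q^4)_m},
\end{equation}
which, after the substitution $Q=q^2$, becomes $(-q;q^2)_\infty\sum_{m\ge0}Q^{m^2}/\big((Q;Q)_m(Q;Q^2)_m\big)$; the other five cases reduce similarly, the $\sigma=1$ ones carrying the expected $q^{1/2}$ (resp.\ $q^{3/2}$) factor.

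It then remains to prove the six resulting one-dimensional identities, each asserting that one of these sums equals a theta quotient of modulus $7$, $14$, or $28$ (the moduli forced by the $\alpha_k=\sqrt{2/7}\sin(k\pi/7)$ appearing in Conjecture \ref{conj-Mizuno}). These are of Rogers--Selberg type: several match entries in Slater's list directly, and the remaining product forms with base $-q^7$ are obtained from the $q^{28}$-based ones, or from the underlying bilateral theta series, by the Jacobi triple product identity. Where a clean citation is unavailable I would construct an explicit Bailey pair, insert it into the Bailey chain, and evaluate the limiting sum by the Jacobi triple product to produce exactly the product on the right-hand side of \eqref{r1}--\eqref{r6}.

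The main obstacle is this last step: pinning down, for each of the six series, the correct Rogers--Selberg companion and the correct theta evaluation, since the products are presented in several different bases ($q^{28}$ versus $-q^7$) that must be reconciled. I would first confirm every identity numerically to high order to fix the exact form, then locate or build the Bailey pair, and finally carry out the Jacobi triple product computation; the bookkeeping converting between the $-q^7$ and $q^{28}$ product shapes is where the care is needed, whereas the reduction carried out in the first two paragraphs is routine.
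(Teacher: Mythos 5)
Your proposal follows essentially the same route as the paper: perform the $n_2$-summation via Euler's identity, split on the parity of $n_1$, and reduce each of the six specializations to a single-fold series (your reduction of $F_0(1,1;q)$ matches the paper's exactly). The paper finishes precisely where you hedge: all six resulting sums are Slater entries --- \eqref{S61}, \eqref{S31}, \eqref{S33}, \eqref{S60}, \eqref{S32}, \eqref{S59} for \eqref{r1}--\eqref{r6} respectively, applied after the substitution $q\mapsto q^2$ (for the $q^{28}$-base products) or $q\mapsto -q$ (for the $-q^7$-base products) --- so no Bailey-pair construction or separate triple-product manipulation is needed.
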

Based on this theorem, we confirm the conjectural formula \eqref{eq-conj-tran}.
\begin{theorem}\label{thm-conj}
Conjecture \ref{conj-Mizuno} holds. Moreover, we have
\begin{align}
U(\tau+1)=\Lambda^4 U(\tau)  , \quad U\left(\frac{\tau}{2\tau+1}\right)=  \begin{pmatrix}
0 & P^3WP \\ P^3W^\mathrm{T}P & 0
\end{pmatrix}U(\tau)
\end{align}
where 
\begin{align}
& P=\diag\left(\zeta_{112}^{-3},\zeta_{112},\zeta_{112}^9 \right),  \quad \Lambda=\begin{pmatrix}
    P & 0 \\ 0 & P
\end{pmatrix},\\
&W=\sqrt{2}\begin{pmatrix}
        \alpha_1 & \alpha_3 & \alpha_2 \\
        -\alpha_3 & \alpha_2 & -\alpha_1 \\
     \alpha_2 &  \alpha_1 & -\alpha_3
    \end{pmatrix}. 
\end{align}
As a consequence, $U(\tau)$ is a vector-valued modular function on $\Gamma_0(2)$.
\end{theorem}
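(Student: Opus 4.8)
The plan is to deduce all three transformation formulas from the product representations of Theorem~\ref{thm-parity}, reducing each statement to the classical transformation laws of the Dedekind eta function $\eta(\tau)=q^{1/24}(q;q)_\infty$ and of theta functions of ``level $7$.'' First I would substitute \eqref{r1}--\eqref{r6} into the definition of $U(\tau)$. Each of the six entries then factors as $q^{\bullet}$ times a weight-$0$ eta quotient built from $(-q;q^2)_\infty$, $(-q^2;q^2)_\infty$ and $(q^2;q^2)_\infty$, times a single Jacobi triple product; by the Jacobi triple product identity the three products with base $q^{28}$ and the three with base $-q^7$ all become theta series of the shape $\sum_{n}(-1)^n q^{14n^2+bn}$, whose residues $b$ are attached to the modulus $7$. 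The purpose of this rewriting is that these six theta series are, up to the common factor $(q^2;q^2)_\infty^{-1}$, the components of one vector-valued theta function whose inversion matrix is the sine matrix with entries $\alpha_k=\sqrt{2/7}\,\sin(k\pi/7)$. Since in each entry the weight $\tfrac12$ of the theta factor is cancelled by that of $(q^2;q^2)_\infty$, the vector $U$ has weight $0$, consistent with the absence of an automorphy factor in \eqref{eq-conj-tran}.

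The heart of the argument, and the step I expect to be hardest, is Conjecture~\ref{conj-Mizuno}, i.e. the $S$-type formula \eqref{eq-conj-tran}. Here I would apply the inversion law $\eta(-1/\tau)=\sqrt{-i\tau}\,\eta(\tau)$ together with the theta transformation formula (equivalently, Poisson summation, i.e. a finite Gauss sum) to each entry of $U(-1/\tau)$. Two features must be produced exactly. First, the appearance of $\tau/2$ on the right-hand side: the denominator $(q^2;q^2)_\infty=q^{-1/12}\eta(2\tau)$ transforms under $\tau\mapsto-1/\tau$ into $\eta(-2/\tau)=\eta\!\left(-1/(\tau/2)\right)=\sqrt{-i\tau/2}\,\eta(\tau/2)$, so the natural variable after inversion is $\tau/2$. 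Second, the coefficients: the $S$-matrix of the level-$7$ theta functions is precisely the array of the $\alpha_k$, which is how $M$ in \eqref{eq-M-defn} arises, while the splitting of each defining sum according to the parity $n_1\equiv\sigma\pmod 2$ (the $F_0$/$F_1$ decomposition) produces the two-block pattern $\left(\begin{smallmatrix}M&M\\M&-M\end{smallmatrix}\right)$, the relative sign coming from the half-period factor distinguishing even from odd $n_1$. The delicate part is bookkeeping all the root-of-unity phases and Gauss-sum normalizations so that the entries line up with $M$ on the nose; I would organize this by treating the six entries as a single theta vector and invoking one uniform theta transformation law, then reading off the matrix.

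The remaining two formulas in the ``Moreover'' part are comparatively mechanical. For $U(\tau+1)=\Lambda^4U(\tau)$ I would track, entry by entry, the fractional power of $q$ (including the prefactors $q^{-3/56},q^{1/56},q^{9/56}$ and the internal $q^{1/2},q^{3/2}$ in the $F_1$-entries); since every Jacobi triple product and eta quotient is an integer power series, $\tau\mapsto\tau+1$ multiplies each coordinate by a root of unity, and these assemble into the diagonal matrix $\Lambda^4$. For $U(\tfrac{\tau}{2\tau+1})$ I would combine \eqref{eq-conj-tran} with this $T$-transformation, using that $V=\left(\begin{smallmatrix}1&0\\2&1\end{smallmatrix}\right)\colon\tau\mapsto\tfrac{\tau}{2\tau+1}$ lies in $\Gamma_0(2)$ and that $V=ST^{-2}S^{-1}$ in $\mathrm{PSL}_2(\mathbb Z)$, while keeping careful track of the half-argument $\tau\mapsto\tau/2$ that is built into \eqref{eq-conj-tran}. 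This half-argument interchanges the $F_0$- and $F_1$-led triples of $U$, which is what renders the resulting transformation matrix anti-block-diagonal; identifying it with $\left(\begin{smallmatrix}0&P^3WP\\P^3W^{\mathrm T}P&0\end{smallmatrix}\right)$ is then a finite linear-algebra check, using the symmetry $M=M^{\mathrm T}$, the involution relation $M^2=\tfrac12 I$ (which follows from $\sum_{k=1}^{3}\sin^2(k\pi/7)=\tfrac74$ and the product-to-sum identities for $\sin(k\pi/7)$), and the definitions of $P$ and $W$.

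Finally, to conclude that $U$ is a vector-valued modular function on $\Gamma_0(2)$, I would invoke that $\Gamma_0(2)$ is generated, together with $-I$, by $T=\left(\begin{smallmatrix}1&1\\0&1\end{smallmatrix}\right)$ and $V=\left(\begin{smallmatrix}1&0\\2&1\end{smallmatrix}\right)$; indeed the elliptic element $VT^{-1}=\left(\begin{smallmatrix}1&-1\\2&-1\end{smallmatrix}\right)$ already lies in $\langle T,V\rangle$. Having exhibited explicit transformation matrices for $U$ under both $T$ and $V$ with trivial automorphy factor, the assignment $\gamma\mapsto(\text{matrix})$ extends to a representation of $\Gamma_0(2)$ on the six-dimensional space spanned by the coordinates of $U$, so $U$ is vector-valued modular of weight $0$ on $\Gamma_0(2)$. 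The only thing left to check is consistency of the two matrices with the defining relations of $\Gamma_0(2)$, which is automatic once the two generating transformations have been established.
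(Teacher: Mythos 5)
Your outline correctly converts the six entries of $U$ into eta quotients times level-$7$ theta series via Theorem~\ref{thm-parity}, and correctly identifies theta inversion as the engine, but it omits the ingredient that actually makes the right-hand side of \eqref{eq-conj-tran} appear, and this is a genuine gap rather than bookkeeping. After inversion, each of $U_1,U_2,U_3$ becomes a combination $\sum_k M_{jk}\,(\text{eta quotient})\,g_{2k-1,7}(\cdot)$ of \emph{single} theta quotients; to match \eqref{eq-conj-tran} one must know that each such single quotient equals the \emph{sum} $U_k+U_{k+3}$ at the half argument. By the definition of $U$ this sum is the unrestricted Nahm sum $F=F_0+F_1$ of \eqref{exam2-F-defn}, so what is needed is precisely the product evaluations \eqref{t1-3-1}--\eqref{t1-3-3} of Li--Wang (the identities that prove Example 3's modularity). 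This is a nontrivial set of $q$-series identities: the two summands $U_k$ and $U_{k+3}$ have different prefactors, $(-q;q^2)_\infty$ versus $(-q^2;q^2)_\infty$, and theta factors with different bases ($q^{28}$ versus $-q^7$), so their sum collapsing to one theta quotient is a $2$-dissection identity, not a consequence of parity splitting or of ``half-period factors'' as your sketch suggests. Your proposal never invokes \eqref{t1-3-1}--\eqref{t1-3-3} nor proves an equivalent statement, so the block pattern $\left(\begin{smallmatrix}M&M\\M&-M\end{smallmatrix}\right)$ is not established. A second, related omission: treating the six entries as ``one uniform theta vector'' would require inverting $U_4,U_5,U_6$ directly, whose theta factors have half-integer characteristics; the paper never does this, but instead inverts the sum identities a second time and subtracts the first block row from the result --- that subtraction is where the minus sign in the lower block actually comes from.

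By contrast, your treatment of the ``Moreover'' statements is reasonable in outline and genuinely different from the paper's. The paper introduces the companion vector $V$ (the $q\mapsto-q$ version of $U$), proves its transformations on $\Gamma_0(4)$ (Theorem~\ref{thm-vector}), and transfers them to $U$ through $U(\tau)=\Lambda^{-1}V\left(\tau+\tfrac12\right)$; you instead compose \eqref{eq-conj-tran} with the translation law, which is workable once one observes that \eqref{eq-conj-tran}, after replacing $\tau$ by $2\tau$, is a transformation under the Fricke-type involution $\tau\mapsto-1/(2\tau)$, and that $\tau\mapsto\tau/(2\tau+1)$ is the conjugate of a translation by this involution. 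One warning if you carry this out: whether the composed matrix comes out anti-block-diagonal, as claimed, hinges on the two $3\times3$ diagonal blocks of the translation matrix differing by a relative sign --- exactly what your tracking of the internal $q^{1/2},q^{3/2}$ in the $F_1$-entries must produce; if the two blocks were equal, the composition would be block-diagonal instead. So the concluding linear algebra is a nontrivial consistency check on the phases, not a formality. None of this, however, repairs the main issue: without \eqref{t1-3-1}--\eqref{t1-3-3} (or a direct proof of the equivalent theta identity) the central claim, Conjecture~\ref{conj-Mizuno}, remains unproven in your approach.
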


To summarize, among the 14 rank two modular examples found by Mizuno, only two of them are left open. They are associated with the matrices $A=\left(\begin{smallmatrix}
2 & 1  \\ 3 & 2
\end{smallmatrix}\right)$ and its inverse $A^{-1}=\left(\begin{smallmatrix} 2 & -1 \\ -3 & 2 \end{smallmatrix}\right)$. As pointed out by Mizuno, the first one corresponds to the open conjectures of Kanade and Russell \cite{K2015,Kursungoz-AC}. Though we were not able to prove them, we provide equivalent conjectural product representations for these Nahm sums. We also find some conjectural identities for the example associated with $A^{-1}$, which justify its modularity.

Furthermore, we searched for identities of the form
\begin{align}\label{id-form}
\widetilde{f}_{A,b,0,d}(q)=q^\delta \prod\limits_{n=1}^\infty (1-q^n)^{a_n},
\end{align}
where $\delta$ is a rational scalar, $\{a_n\}$ is some bounded sequence of integers with $(A,d)$ from Mizuno's rank two examples and we allow $b=(b_1,b_2)^\mathrm{T}$ to change. With the help of Maple we discover some new identities. Surprisingly, the duality predicated by Mizuno still exists. That is, once we find a product representation for $\widetilde{f}_{A,b,0,d}$ in the form \eqref{id-form}, then we have a similar dual identity for $\widetilde{f}_{A^*,b^*,0,d^*}$. 
For example, for the following choices of $(A,b,d)$ where $A$ and $d$ are the same with Example 1:
\begin{align}\label{new-exam-1}
             A = \begin{pmatrix}
        2 & 1\\
        2 & 2
\end{pmatrix}, \quad b=\begin{pmatrix}
    (a-3)/2 \\ a
\end{pmatrix}, \quad d=(1,2),
\end{align}
we find that (see Theorem \ref{thm-new-exam1})
\begin{align}
\sum_{i,j\ge 0}\frac{q^{2i^2+4ij+4j^2+(a-3)i+2aj}}{(q^2;q^2)_i(q^4;q^4)_j}
=(1+q^{a+1}+q^{a-1})(-q^{a+3};q^2)_\infty.
\end{align}
As its dual, for
\begin{align}\label{new-exam-1-dual}
    A=\begin{pmatrix}
            1 & -1/2\\
            -1 & 1
        \end{pmatrix}, \quad b=\begin{pmatrix} -3/2 \\ (a+3)/2 \end{pmatrix}, \quad d=(1,2), \quad a\in \mathbb{Q}
\end{align}
we prove the following identity (see Theorem \ref{thm-new-exam2}):
\begin{align}
\sum_{i,j\geq 0} \frac{q^{i^2-2ij+2j^2-3i+(a+3)j}}{(q^2;q^2)_i(q^4;q^4)_j}=2q^{-2}(1+q^{a+1}+q^2)(-q^2,-q^{a+3};q^2)_\infty.
\end{align}

The rest of this paper is organized as follows. In Section \ref{sec-pre} we collect some auxiliary results needed in the proofs. In Section \ref{sec-exam} we discuss Mizuno's rank two modular examples one by one and provide proofs or conjectural formulas for them. In the same time, we will present some new identities in the form \eqref{id-form} which are companions to Mizuno's examples.

\section{ Preliminaries}\label{sec-pre}
We need Euler’s $q$-exponential identities \cite[Corollary 2.2]{Andrews}
\begin{align}\label{euler}
\sum_{n=0}^{\infty}\frac{z^n}{(q;q)_n}
=
\frac{1}{(z;q)_{\infty}}, \quad|z|<1, \quad
\sum_{n=0}^{\infty}\frac{q^{(^n_2)} z^n}{(q;q)_n}
=
(-z;q)_{\infty}
\end{align}
and the Jacobi triple product identity \cite[Theorem 2.8]{Andrews}
\begin{align}\label{Jacobi}
(q,z,q/z;q)_\infty=\sum_{n=-\infty}^\infty (-1)^nq^{\binom{n}{2}}z^n.
\end{align}

For Example 4 we need to evaluate contour integrals of some infinite products. For this we rely on the following result found from the book of  Gasper and Rahman \cite{GR-book}. Note that the symbol ``idem $(c_1;c_2,\dots,c_C)$'' after an expression stands for the sum of the $(C-1)$ expressions obtained from the preceding expression by interchanging $c_1$ with each $c_k$, $k=2,3,\dots,C$.

\begin{lemma}\label{lem-integral}
(Cf.\ \cite[Eq.\ (4.10.6)]{GR-book})
Suppose that
$$P(z):=\frac{(a_1z,\dots,a_Az,b_1/z,\dots,b_B/z;q)_\infty}{(c_1z,\dots,c_Cz,d_1/z,\dots,d_D/z;q)_\infty}$$
has only simple poles. We have
\begin{align}\label{eq-integral}
\oint P(z)\frac{dz}{2\pi iz}=& \frac{(b_1c_1,\dots,b_Bc_1,a_1/c_1,\dots,a_A/c_1;q)_\infty }{(q,d_1c_1,\dots,d_Dc_1,c_2/c_1,\dots,c_C/c_1;q)_\infty} \nonumber \\
& \times \sum_{n=0}^\infty \frac{(d_1c_1,\dots,d_Dc_1,qc_1/a_1,\dots,qc_1/a_A;q)_n}{(q,b_1c_1,\dots,b_Bc_1,qc_1/c_2,\dots,qc_1/c_C;q)_n} \nonumber \\
&\times \Big(-c_1q^{(n+1)/2}\Big)^{n(C-A)}\Big(\frac{a_1\cdots a_A}{c_1\cdots c_C} \Big)^n +\text{idem} ~(c_1;c_2,\dots,c_C)
\end{align}
when $C>A$, or if $C=A$ and
\begin{align}\label{cond}
\left|\frac{a_1\cdots a_A }{c_1\cdots c_C}\right|<1.
\end{align}
Here the integration is over a positively oriented contour so that the poles of
$$(c_1z,\dots,c_Cz;q)_\infty^{-1}$$
lie outside the contour, and the origin and poles of $(d_1/z,\dots,d_D/z;q)_\infty^{-1}$ lie inside the contour.
\end{lemma}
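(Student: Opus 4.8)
The plan is to evaluate the integral by the residue theorem, reading the answer off from the poles lying \emph{outside} the prescribed contour rather than from those inside it. Writing the integrand as $P(z)/z$, the hypothesis that $P$ has only simple poles means $P(z)/z$ has simple poles at the origin, at $z=d_jq^n$ (the poles of $1/(d_j/z;q)_\infty$, which lie inside the contour), and at $z=q^{-m}/c_k$ for $m\geq 0$ (the poles of $1/(c_kz;q)_\infty$, which lie outside). First I would deform the given contour outward to a circle $|z|=R_m$ whose radius is chosen to pass between consecutive poles, and let $R_m\to\infty$. The condition $C>A$, or $C=A$ together with $|a_1\cdots a_A/(c_1\cdots c_C)|<1$, is precisely what forces $P(z)\to 0$ on these circles: the growing $q$-products $(a_1z,\dots,a_Az;q)_\infty$ in the numerator are outpaced by $(c_1z,\dots,c_Cz;q)_\infty$ in the denominator. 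Consequently the integral over the large circle vanishes in the limit, so that $\oint P(z)\,dz/(2\pi iz)$ equals minus the sum of the residues of $P(z)/z$ at the outside poles $z=q^{-m}/c_k$; this overall minus sign will cancel against the factor $-c_kq^m$ produced when one differentiates the vanishing factor of $(c_kz;q)_\infty$, leaving the positive expression claimed.

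Next I would compute these residues. At the leading pole $z=1/c_1$, differentiating the vanishing factor of $(c_1z;q)_\infty$ contributes the $(q;q)_\infty$ appearing in the denominator, while evaluating every remaining factor of $P$ at $z=1/c_1$ produces exactly the product prefactor displayed in \eqref{eq-integral}: each $(a_kz;q)_\infty\mapsto(a_k/c_1;q)_\infty$, each $(b_k/z;q)_\infty\mapsto(b_kc_1;q)_\infty$, each $(d_k/z;q)_\infty\mapsto(d_kc_1;q)_\infty$, and each $(c_kz;q)_\infty$ with $k\neq 1$ gives $(c_k/c_1;q)_\infty$. For the pole at $z=q^{-m}/c_1$ I would reduce each $q$-product back to its value at $z=1/c_1$ using $(xq^{-m};q)_\infty=(xq^{-m};q)_m\,(x;q)_\infty$ and then rewrite $(xq^{-m};q)_m$ as a quotient of $q$-shifted factorials. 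This turns the $m$-th residue into the prefactor times the $n=m$ term of the series in \eqref{eq-integral}; the mismatch between the $A$ numerator and $C$ denominator $q$-products is exactly what generates the factor $(-c_1q^{(n+1)/2})^{n(C-A)}(a_1\cdots a_A/(c_1\cdots c_C))^n$. Summing over $m=n\geq 0$ yields the stated basic hypergeometric series, and carrying out the identical computation at the poles $z=q^{-m}/c_k$ for $k=2,\dots,C$ reproduces the ``idem $(c_1;c_2,\dots,c_C)$'' terms.

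The main obstacle is the rigorous justification of the vanishing of the large-circle integrals, which requires control of the asymptotics of $(az;q)_\infty$ as $|z|\to\infty$ along radii that avoid the poles. This is the single place where the hypotheses $C>A$ or $|a_1\cdots a_A/(c_1\cdots c_C)|<1$ are genuinely consumed: without them the residue series fails to converge and the contour cannot be pushed to infinity at all, so the identity simply breaks down. The remaining difficulty is purely computational, namely the $q$-factorial bookkeeping needed to normalize the $m$-th residue against the leading one and to extract the precise power $(-c_1q^{(n+1)/2})^{n(C-A)}$; this is routine once the shift identity for $(xq^{-m};q)_m$ is in place, but it must be carried out carefully to match the stated form.
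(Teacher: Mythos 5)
The paper does not prove this lemma at all: it is imported verbatim from Gasper and Rahman \cite[Eq.\ (4.10.6)]{GR-book}, so the only meaningful comparison is with the derivation in \S 4.10 of that book, and your sketch is in substance exactly that derivation. Your plan is sound: under the stated hypotheses the contour can be pushed to infinity through circles $|z|=R_m$ threaded between consecutive poles, the integral then equals minus the sum of residues at the outside poles $z=q^{-n}/c_k$, and the reduction of the $n$-th residue to the leading one via $(xq^{-n};q)_\infty=(xq^{-n};q)_n\,(x;q)_\infty$ together with $(xq^{-n};q)_n=(q/x;q)_n(-x)^nq^{-n(n+1)/2}$ does reproduce both the displayed series and the factor $\left(-c_1q^{(n+1)/2}\right)^{n(C-A)}\left(a_1\cdots a_A/(c_1\cdots c_C)\right)^n$, since each of the $A$ numerator factors contributes $(-a_k/c_1)^nq^{-n(n+1)/2}$ while each of the $C$ denominator factors contributes the reciprocal of such a term; the sign bookkeeping at the simple zero of $(c_1z;q)_\infty$ also works out as you say. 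Two caveats are worth recording. First, your assertion that $P(z)/z$ has a \emph{simple pole at the origin} is wrong as stated: the poles $z=d_jq^m$ accumulate at $0$, so the origin is not an isolated singularity at all; this is harmless for your argument precisely because you deform outward and never touch the inside singularities, but it should not be phrased as a residue datum. Second, the large-circle estimate, which you correctly identify as the only genuinely analytic step and the unique place the hypotheses are consumed, is acknowledged rather than executed; it follows from the standard asymptotic $|(cz;q)_\infty|\asymp |cz|^m q^{-m(m-1)/2}$ on $|z|=R_m\asymp q^{-m}$, whence $\sup_{|z|=R_m}|P(z)|=O(\theta^m)$ with $\theta=\left|a_1\cdots a_A/(c_1\cdots c_C)\right|<1$ when $C=A$, and with superexponential decay when $C>A$, while the $b$- and $d$-factors tend to $1$. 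With those two points repaired, your reconstruction is a complete and correct proof of the lemma, matching the source the paper cites.
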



Let $J_{a,m}:=(q^a,q^{m-a},q^m;q^m)_\infty$ and $J_m:=(q^m;q^m)_\infty$. We will reduce some double sums to single sums, and most of these single sums appeared in Slater's list \cite{Slater}  of Rogers-Ramanujan type identities. For convenience, we label the $n$-th identity in Slater's list as (S. $n$). Below we list the identities we need:
\begin{align}
&\sum_{n=0}^\infty \frac{q^{n(n+1)/2}(a;q)_n}{(q;q)_n}=(aq;q^2)_\infty (-q;q)_\infty,  \quad \text{(\cite[p.\ 21, Cor.\ 2.7]{Andrews})} \label{Lebesgue} \\
&\sum_{n=0}^\infty \frac{q^{n^2+n}(-q;q)_{n+1}}{(q^2;q^2)_n}=\frac{J_5}{J_{1,5}}, \quad \text{(\cite[Eq.\ (2.5.4)]{MSZ})} \label{RR-variant} \\
&\sum_{n=0}^\infty\frac{q^{n(n+1)}(-q;q^2)_n}{(q^2;q^2)_n}
=
\frac{1}{(q^2,q^3,q^7;q^8)_\infty}, \quad \text{(\cite[Eq.\ (2.24)]{Go})}  \label{Go2.24}\\
&\sum_{n\ge 0}\frac{q^{n(n+2)}}{(q^4;q^4)_n}
    =
    \frac{J_{1,5}}{J_{1,4}} ,  \quad   \text{(S.\ 16)}
    \label{S16}
    \\
&\sum_{n\ge 0}\frac{q^{n^2}}{(q^4;q^4)_n}
    =
     \frac{J_{2,5}}{J_{1,4}} ,   \quad  \text{(S.\ 20)}
     \label{S20} \\
&\sum_{n=0}^\infty \frac{q^{2n^2+2n}}{(-q;q)_{2n+1}(q^2;q^2)_n}=\frac{J_{1,7}}{J_2}, \quad \text{(S. 31)}  \label{S31} \\
&\sum_{n=0}^\infty \frac{q^{2n^2+2n}}{(-q;q)_{2n}(q^2;q^2)_n}=\frac{J_{2,7}}{J_2}, \quad \text{(Rogers \cite[p.\ 342]{Rogers1894}, S. 32)} \label{S32} \\
&\sum_{n=0}^\infty \frac{q^{2n^2}}{(-q;q)_{2n}(q^2;q^2)_n}=\frac{J_{3,7}}{J_2}, \quad \text{(S. 33)} \label{S33} \\
&\sum_{n=0}^\infty\frac{(-q;q^2)_nq^{n^2}}{(q^2;q^2)_n}=\frac{1}{(q,q^4,q^7;q^8)_\infty}, \quad \text{(S. 36)} \label{S36}\\
&\sum_{n=0}^\infty \frac{q^{n^2}(-q^3;q^6)_n}{(q^2;q^2)_{2n}}=\frac{J_2J_{2,24}J_{10,24}}{J_1J_{24}J_{4,24}}, \quad \text{(Ramanujan \cite[Entry 5.3.8]{RamaLost2})} \label{Rama538}  \\
&\sum_{n=0}^\infty \frac{q^{n(n+1)/2}(-q^3;q^3)_n}{(q;q)_{2n+1}}=\frac{J_{12}^5J_{2,12}}{J_{1,12}^2J_{3,12}^2J_{5,12}^2}, \quad \text{(\cite[Eq.\ (1.24)]{MS})} \label{MSZ124} \\
&\sum_{n=0}^\infty \frac{q^{n^2+2n}}{(q;q)_n(q;q^2)_{n+1}}=\frac{J_{2,14}}{J_1}, \quad \text{(Rogers \cite[p.\ 329 (1)]{Rogers1917}, S. 59)} \label{S59} \\
&\sum_{n=0}^\infty \frac{q^{n^2+n}}{(q;q)_n(q;q^2)_{n+1}}=\frac{J_{4,14}}{J_1}, \quad \text{(S. 60)} \label{S60} \\
&\sum_{n=0}^\infty \frac{q^{n^2}}{(q;q^2)_n(q;q)_n}=\frac{J_{6,14}}{J_1},  \quad \text{(S. 61)}\label{S61} \\
&\sum_{n=0}^\infty \frac{q^{(n^2+n)/2}}{(q;q)_n(q;q^2)_{n+1}}=\frac{J_2J_{14}^3}{J_1J_{1,14}J_{4,14}J_{6,14}}, \quad \text{(S. 80)} \label{S80} \\
&\sum_{n=0}^\infty \frac{q^{(n^2+n)/2}}{(q;q^2)_n(q;q)_n}=\frac{J_2J_{14}^3}{J_1J_{2,14}J_{3,14}J_{4,14}}, \quad \text{(S. 81)}  \label{S81}\\
&\sum_{n=0}^\infty \frac{q^{(n^2+3n)/2}}{(q;q)_n(q;q^2)_{n+1}}=\frac{J_2J_{14}^3}{J_1J_{2,14}J_{5,14}J_{6,14}}, \quad \text{(S. 82)} \label{S82} \\
&\sum_{n=0}^\infty \frac{q^{n^2}}{(q;q^2)_n(q^4;q^4)_n}=\frac{J_2J_{14}J_{3,28}J_{11,28}}{J_1J_{28}J_{4,28}J_{12,28}}, \quad \text{(S. 117)}  \label{S117} \\
&\sum_{n=0}^\infty \frac{q^{n^2+2n}}{(q;q^2)_n(q^4;q^4)_n}=\frac{J_2J_{1,14}J_{12,28}}{J_1J_4J_{28}}, \quad \text{(S. 118)} \label{S118} \\ 
&\sum_{n=0}^\infty \frac{q^{n^2+2n}}{(q;q)_{2n+1}(-q^2;q^2)_n}=\frac{J_2J_{4,28}J_{5,14}}{J_1J_4J_{28}}. \quad \text{(S. 119)} \label{S119} 
\end{align}
In our second proof of Example 6, we need to use the following two identities.
\begin{lemma}\label{lem-RR-id}
We have
\begin{align}
&\sum_{n=0}^\infty \frac{q^{3n^2}(-q;q^2)_{3n}}{(q^6;q^6)_{2n}}=\frac{J_{24}^3}{J_{3,24}J_{4,24}J_{9,24}}, \label{lem-W-1} \\
&\sum_{n=0}^\infty \frac{q^{3n(n+1)/2}(-q;q)_{3n+1}}{(q^3;q^3)_{2n+1}}=\frac{J_{12}^3J_{2,12}}{J_{1,12}J_{3,12}^2J_{5,12}}. \label{lem-W-2}
\end{align}
\end{lemma}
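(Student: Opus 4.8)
The plan is to prove the two identities \eqref{lem-W-1} and \eqref{lem-W-2} separately, since their left-hand sides differ in base and in parity, but using one common philosophy. In each summand the numerator $q$-Pochhammer has length roughly $3n$ in base $q$ or $q^2$, while the denominator has length roughly $2n$ in the \emph{tripled} base $q^3$ or $q^6$; the base always differing by a factor of $3$ signals a cubic structure, so my first move would be to expose it explicitly. For \eqref{lem-W-1} I would write $(-q;q^2)_{3n}=(-q^3;q^6)_n\prod_{\substack{1\le m\le 6n-1\\ m\equiv\pm1\,(\mathrm{mod}\,3)}}(1+q^m)$, peeling off the factors whose exponents are divisible by $3$ from the rest; combined with $q^{3n^2}$ and $(q^6;q^6)_{2n}$ this should recast the sum in the base $Q=q^3$ with an explicit residual theta-type factor.

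Next I would try to match the rearranged sum against a known single-sum identity from the assembled list. On the product side the Jacobi triple product \eqref{Jacobi} already simplifies the right-hand side of \eqref{lem-W-1}: grouping residues modulo $24$ gives $J_{24}^3/(J_{3,24}J_{4,24}J_{9,24})=1/((q^3;q^6)_\infty(q^4,q^{20};q^{24})_\infty)$, i.e.\ a cubic factor $1/(q^3;q^6)_\infty$ times a single theta quotient. Seeing this factor on the product side strongly suggests that after the rearrangement the left-hand side factors through the same piece, leaving a sum that should coincide with a base-tripled avatar of \eqref{S36} or \eqref{Go2.24} (both have exactly the shape $\sum(-q;q^2)_nq^{\mathrm{quad}}/(q^2;q^2)_n$), or with \eqref{Rama538}, whose modulus $24$ and factor $(-q^3;q^6)_n$ already match. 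The task then reduces to a theta-function identity, which I would verify with \eqref{Jacobi} and the Euler identities \eqref{euler}.

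For \eqref{lem-W-2} I would proceed analogously but exploit its very close kinship with \eqref{MSZ124}. The two product sides are related by the simple factor $J_{12}^2/(J_{1,12}J_{5,12})=1/(q,q^5,q^7,q^{11};q^{12})_\infty=(q^2;q^2)_\infty(q^3;q^3)_\infty/((q;q)_\infty(q^6;q^6)_\infty)$, and on the series side \eqref{lem-W-2} is exactly \eqref{MSZ124} with the roles of the bases $q$ and $q^3$ interchanged: it has $(-q;q)_{3n+1}$ against $(q^3;q^3)_{2n+1}$ where \eqref{MSZ124} has $(-q^3;q^3)_n$ against $(q;q)_{2n+1}$. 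Hence the natural route is to derive \eqref{lem-W-2} from \eqref{MSZ124} by a cubic transformation of the underlying basic hypergeometric series, using $(-q;q)_{3n+1}=(q^2;q^2)_{3n+1}/(q;q)_{3n+1}$ to reconcile the Pochhammers. Should a direct transformation prove elusive, the robust fallback for both identities is the Bailey-pair method: construct an explicit Bailey pair relative to base $q^3$ whose $\alpha_n$ is supported on an arithmetic progression, so that \eqref{Jacobi} converts $\sum\alpha_n$ into the desired theta quotient, insert it into Bailey's lemma, and read off \eqref{lem-W-1}--\eqref{lem-W-2}.

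The main obstacle, I expect, is precisely this cubic, mismatched-base structure: unlike a textbook Rogers--Ramanujan sum, the numerator Pochhammer and the denominator live in different bases whose lengths grow at different rates, so no single substitution $q\mapsto q^3$ reduces either sum outright. Finding the correct intermediate object---either the explicit cubic transformation linking the sum to \eqref{S36}/\eqref{Go2.24}/\eqref{Rama538}/\eqref{MSZ124}, or the right base-$q^3$ Bailey pair---is the crux. Once it is in hand, the remaining product-side manipulations are routine applications of \eqref{Jacobi} and \eqref{euler}, and as a safeguard one can confirm the identities by checking agreement of the $q$-expansions of both sides to sufficiently high order.
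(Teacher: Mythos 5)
Your peripheral computations are correct (the rewriting of the product side of \eqref{lem-W-1} as $1/\bigl((q^3;q^6)_\infty(q^4,q^{20};q^{24})_\infty\bigr)$, and of $J_{12}^2/(J_{1,12}J_{5,12})$), but none of the routes you sketch actually closes the proof, and in each the central step is missing or demonstrably fails. (a) Peeling $(-q^3;q^6)_n$ out of $(-q;q^2)_{3n}=(-q;q^6)_n(-q^3;q^6)_n(-q^5;q^6)_n$ leaves the $n$-dependent factor $(-q;q^6)_n(-q^5;q^6)_n$ \emph{inside} the sum, not a theta-type factor, so the left side does not ``factor through'' $1/(q^3;q^6)_\infty$ the way the product side suggests. (b) No base-tripled avatar of the listed identities matches: under $q\mapsto q^3$, \eqref{Rama538} has numerator $(-q^9;q^{18})_n$ rather than $(-q,-q^3,-q^5;q^6)_n$, while \eqref{S36} and \eqref{Go2.24} acquire denominator $(q^6;q^6)_n$ of length $n$, not $(q^6;q^6)_{2n}$; so the proposed matching cannot succeed. (c) The ``cubic transformation'' that would carry \eqref{MSZ124} into \eqref{lem-W-2} is never exhibited, and the structural kinship plus the nice ratio of product sides do not furnish one: in fact, within this paper \eqref{MSZ124} and \eqref{lem-W-2} are used to evaluate \emph{different} pieces (the odd-$i$ part $S_1$ versus the odd-$j$ part $T_1$, which have different product values) of the same double sum, so neither follows from the other by any substitution. (d) The Bailey-pair fallback leaves the pair unconstructed, and for these mixed-base sums constructing it is exactly the crux you acknowledge; checking $q$-expansions is of course not a proof.

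The mechanism that actually works, and which your plan never reaches, is different: observe that both Pochhammers already live in base $q^6$, namely $(-q;q^2)_{3n}=(-q,-q^3,-q^5;q^6)_n$ and $(q^6;q^6)_{2n}=(q^3,-q^3,q^6,-q^6;q^6)_n$, and dispose of the quadratic exponent by the parametrization $q^{3n^2}=\lim_{a\to 0}(-q^3/a;q^6)_n\,a^n$. The left side of \eqref{lem-W-1} then becomes the $a\to 0$ limit of a ${}_3\phi_2$ in base $q^6$ with argument $a$; applying the three-term transformation \eqref{eq-3phi2} converts it into a ${}_2\phi_1$ to which the $q$-Kummer (Bailey--Daum) summation \eqref{eq-BD} applies, giving $(-q^4,-q^8;q^{12})_\infty/(q^3;q^6)_\infty$, which is the stated product. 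Identity \eqref{lem-W-2} follows by the same two steps carried out in base $q^3$. So the gap is concrete: you identified the obstacle (the mismatched bases) but supplied no device to overcome it, and the specific reductions you propose in its place can be checked to fail.
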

The above two identities appear to be new as we cannot find a reference. Our proof requires the use of the ${}_r\phi_s$ series defined by 
\begin{align}
{}_r\phi_s \bigg(\genfrac{}{}{0pt}{}{a_1,  \cdots,  a_r}{b_1,   \dots,  b_s}; q,z \bigg) 
:=\sum_{n=0}^\infty\frac{(a_1,\cdots,a_r;q)_n}{(q,b_1,\cdots,b_s;q)_n}((-1)^nq^{n(n-1)/2})^{1+s-r}z^n.
\end{align}
Moreover, we need the $q$-Kummer (Bailey-Daum) summation formula \cite[(\uppercase\expandafter{\romannumeral2}. 9)]{GR-book}
\begin{align}\label{eq-BD}
    {}_2\phi_1 \bigg(\genfrac{}{}{0pt}{}{a, b}{aq/b}; q,-q/b\bigg)=\frac{(-q;q)_\infty (aq,aq^2/b^2;q^2)_\infty}{(-q/b,aq/b;q)_\infty}
\end{align}
and the following transformation formula \cite[(\uppercase\expandafter{\romannumeral3}.9)]{GR-book}
\begin{align}\label{eq-3phi2}
    {}_3\phi_2 \bigg(\genfrac{}{}{0pt}{}{a, b,c}{d,   e}; q, \frac{de}{abc}\bigg)=\frac{(e/a,de/bc;q)_\infty}{(e,de/abc;q)_\infty}  {}_3\phi_2 \bigg(\genfrac{}{}{0pt}{}{a,d/b,d/c}{d, de/bc}; q, \frac{e}{a}\bigg).
\end{align}
\begin{proof}[Proof of Lemma \ref{lem-RR-id}]
We have
\begin{align*}
&\sum_{n=0}^\infty \frac{q^{3n^2}(-q;q^2)_{3n}}{(q^6;q^6)_{2n}}=\lim\limits_{a\rightarrow 0} \sum_{n=0}^\infty \frac{(-q,-q^3,-q^5,-q^3/a;q^6)_na^n}{(q^3,-q^3,q^6,-q^6;q^6)_n}\\ 
&=\lim\limits_{a\rightarrow 0} {}_3\phi_2 \bigg(\genfrac{}{}{0pt}{}{-q, -q^5,-q^3/a}{q^3,-q^6}; q^6,a\bigg) \\
&=\lim\limits_{a\rightarrow 0} \frac{(q^5,-aq;q^6)_\infty}{(-q^6,a;q^6)_\infty} {}_3\phi_2 \bigg(\genfrac{}{}{0pt}{}{-q,-q^{-2},-a}{q^3,-aq}; q^6,q^5\bigg) \quad \text{(by \eqref{eq-3phi2})} \\
&=\frac{(q^5;q^6)_\infty}{(-q^6;q^6)_\infty} {}_2\phi_1 \bigg(\genfrac{}{}{0pt}{}{-q^{-2}, -q}{q^3}; q^6,q^5\bigg) \\
&=\frac{(q^5;q^6)_\infty}{(-q^6;q^6)_\infty} \cdot \frac{(-q^6;q^6)_\infty (-q^4,-q^8;q^{12})_\infty}{(q^5,q^3;q^6)_\infty} \quad \text{(by \eqref{eq-BD})} \\
&=\frac{(-q^4,-q^8;q^{12})_\infty}{(q^3;q^6)_\infty}.
\end{align*}
This proves \eqref{lem-W-1}.

Next, we have 
\begin{align*}
&\sum_{n=0}^\infty \frac{q^{3n(n+1)/2}(-q;q)_{3n+1}}{(q^3;q^3)_{2n+1}}=\frac{1+q}{1-q^3}\lim\limits_{a\rightarrow 0}\sum_{n=0}^\infty \frac{(-q^2,-q^3,-q^4,-q^3/a;q^3)_na^n}{(q^3,-q^3,q^{9/2},-q^{9/2};q^3)_n} \nonumber \\
&=\frac{1+q}{1-q^3}\lim\limits_{a\rightarrow 0} {}_3\phi_2 \bigg(\genfrac{}{}{0pt}{}{-q^2,-q^4,-q^3/a}{q^{9/2},-q^{9/2}}; q^3,a\bigg) \nonumber \\
&=\frac{1+q}{1-q^3}\cdot \lim\limits_{a\rightarrow 0} \frac{(q^{5/2},-aq^2;q^3)_\infty}{(-q^{9/2},a;q^3)_\infty} {}_3\phi_2 \bigg(\genfrac{}{}{0pt}{}{-q^2,-q^{1/2},-q^{3/2}a}{q^{9/2},-aq^2}; q^3,q^{5/2}\bigg) \quad \text{(by \eqref{eq-3phi2})} \\
&=\frac{1+q}{1-q^3}\cdot \frac{(q^{5/2};q^3)_\infty}{(-q^{9/2};q^3)_\infty} {}_2\phi_1 \bigg(\genfrac{}{}{0pt}{}{-q^2,-q^{1/2}}{q^{9/2}}; q^3,q^{5/2}\bigg) \nonumber \\
&=\frac{1+q}{1-q^3}\cdot \frac{(q^{5/2};q^3)_\infty}{(-q^{9/2};q^3)_\infty}\cdot \frac{(-q^3;q^3)_\infty (-q^5,-q^7;q^6)_\infty}{(q^{5/2},q^{9/2};q^3)_\infty} \quad \text{(by \eqref{eq-BD})}\\
&=\frac{(-q^3;q^3)_\infty (-q,-q^5;q^6)_\infty}{(q^3;q^6)_\infty}.
\end{align*}
This proves \eqref{lem-W-2}.
\end{proof}

In order to prove the modular transformation formula \eqref{eq-conj-tran}, we need some knowledge from the theory of modular forms. Recall the full modular group 
$$\mathrm{SL}(2,\mathbb{Z})=\left\{\begin{pmatrix} a & b \\ c & d \end{pmatrix}: a,b,c,d\in \mathbb{Z}, ad-bc=1\right\}$$
and its congruence subgroup
\begin{align*}
&\Gamma_0(N):=\left\{\begin{pmatrix} a & b \\ c & d  \end{pmatrix} \in \mathrm{SL}(2,\mathbb{Z}):  \begin{pmatrix} a & b \\ c & d \end{pmatrix} \equiv \begin{pmatrix} * & * \\ 0 & * \end{pmatrix} \pmod{N}\right\}.
\end{align*}

We define the Dedekind eta function
\begin{align}
\eta(\tau):=q^{1/24}(q;q)_\infty
\end{align}
and Weber's modular functions \cite{We}:
\begin{align}\label{Weber-defn}
\mathfrak{f}(\tau):=q^{-1/48} (-q^{1/2};q)_\infty, \ \  \mathfrak{f}_1(\tau):=q^{-1/48} (q^{1/2};q)_\infty, \ \ \mathfrak{f}_2(\tau):=q^{1/24}(-q;q)_\infty.
\end{align}
The following properties are well-known:
\begin{align}
&\eta(-1/\tau)=\sqrt{-i \tau} \eta(\tau), \ \ \eta(\tau+1)=e^{\pi i  /12} \eta(\tau), \label{eta-tran} \\
&\mathfrak{f}(-1/\tau) = \mathfrak{f}(\tau), \ \ \ \mathfrak{f}_2(-1/\tau) = \frac{1}{\sqrt{2}}  \mathfrak{f}_1(\tau), \ \ \mathfrak{f}_1(-1/\tau)=\sqrt{2} \mathfrak{f}_2(\tau), \label{Weber-1} \\
&\mathfrak{f}(\tau+1) =e^{-\pi i /24}  \mathfrak{f}_1(\tau), \ \ \mathfrak{f}_1(\tau+1) = e^{-\pi i/24} \mathfrak{f}(\tau),  \ \ \mathfrak{f}_2(\tau+1) = e^{\pi i/12} \mathfrak{f}_2(\tau). \label{Weber-2} 
\end{align}

For $m\in \frac{1}{2}\mathbb{N}$ and $j\in \frac{1}{2}\mathbb{Z}$, we define the theta series \cite[p.\ 215]{Wakimoto}
\begin{align}
    h_{j,m}(\tau):=\sum_{k\in \mathbb{Z}} q^{m(k+\frac{j}{2m})^2}, \quad
    g_{j,m}(\tau):=\sum_{k\in \mathbb{Z}} (-1)^kq^{m(k+\frac{j}{2m})^2}.
\end{align}
It is easy to verify the following properties \cite[p.\ 215]{Wakimoto}:
\begin{align}
&h_{j,m}(\tau)=h_{-j,m}(\tau)=h_{2m+j,m}(\tau), \quad g_{j,m}(\tau)=g_{-j,m}(\tau)=-g_{2m+j,m}(\tau), \label{g-h-period}\\
&g_{j,m}(\tau)=h_{2j,4m}(\tau)-h_{4m-2j,4m}(\tau), \label{g-h-change}\\
&h_{j,m}(2\tau)=h_{2j,2m}(\tau), \quad g_{j,m}(2\tau)=g_{2j,2m}(\tau). \label{hg-double}
\end{align}

\begin{lemma}\label{lem-modular}
(Cf. \cite[p.\ 215, Theorem 4.5]{Wakimoto}.) For $j\in \mathbb{Z}$ and $m\in \frac{1}{2}\mathbb{N}$ we have
\begin{align}
h_{j,m}\left(-\frac{1}{\tau}\right)&=\frac{(-i\tau)^{\frac{1}{2}}}{\sqrt{2m}} \sum_{0\leq k\leq 2m-1} e^{\frac{\pi ijk}{m}}h_{k,m}(\tau), \\
g_{j,m}\left(-\frac{1}{\tau}\right)&=\frac{(-i\tau)^{\frac{1}{2}}}{\sqrt{2m}}\sum_{\begin{smallmatrix}
    0\leq k\leq 4m-1 \\ k ~~\text{odd}
\end{smallmatrix}} e^{\frac{\pi ijk}{2m}} h_{\frac{k}{2},m}(\tau).
\end{align}
For $j+m\in \mathbb{Z}$ we have
\begin{align}\label{fg-translation}
h_{j,m}(\tau+1)=e^{\frac{\pi ij^2}{2m}}h_{j,m}(\tau), \quad g_{j,m}(\tau+1)=e^{\frac{\pi ij^2}{2m}}g_{j,m}(\tau).
\end{align}
\end{lemma}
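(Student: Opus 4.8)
The plan is to derive both transformation laws directly from the Gaussian theta series defining $h_{j,m}$ and $g_{j,m}$, handling the translation $\tau\mapsto \tau+1$ by an elementary manipulation of exponents and the inversion $\tau\mapsto -1/\tau$ by Poisson summation. Throughout I would write $q=e^{2\pi i\tau}$ and expand
\[
m\left(k+\frac{j}{2m}\right)^2=mk^2+jk+\frac{j^2}{4m}.
\]

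For the translation formula \eqref{fg-translation}, replacing $\tau$ by $\tau+1$ multiplies the $k$-th summand by $e^{2\pi i(mk^2+jk)}e^{2\pi i j^2/(4m)}$. Under the hypothesis $j+m\in\mathbb{Z}$ one checks that $mk^2+jk\in\mathbb{Z}$ for every $k\in\mathbb{Z}$: for even $k$ this is clear since $2m,2j\in\mathbb{Z}$, and for odd $k$ one uses that $(k-1)(k+1)$ is divisible by $8$, so that $mk^2+jk\equiv m+j\equiv 0\pmod{\mathbb{Z}}$. Hence the first factor is $1$ and the common factor $e^{2\pi i j^2/(4m)}=e^{\pi i j^2/(2m)}$ pulls out of the sum. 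The sign $(-1)^k$ plays no role in this computation, so the same argument gives the identity for $g_{j,m}$, establishing \eqref{fg-translation}.

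For the inversion formula the key step is Poisson summation applied to the Gaussian $f(x)=e^{-2\pi i m(x+j/(2m))^2/\tau}$, whose values at the integers sum to $h_{j,m}(-1/\tau)$. Using the Fourier transform of a shifted Gaussian (valid first for $\tau=it$ with $t>0$, then extended by analytic continuation to pin down the branch of the square root) I would obtain
\[
h_{j,m}\left(-\frac{1}{\tau}\right)=\left(\frac{\tau}{2im}\right)^{1/2}\sum_{n\in\mathbb{Z}}e^{\pi i jn/m}\,q^{n^2/(4m)}.
\]
The remaining combinatorial step is to split the index as $n=2m\ell+k$ with $0\le k\le 2m-1$, which is legitimate because $2m\in\mathbb{N}$; then $n^2/(4m)=m(\ell+k/(2m))^2$ reassembles the inner sum over $\ell$ into $h_{k,m}(\tau)$, while $e^{\pi i jn/m}=e^{\pi i jk/m}$ since $j\in\mathbb{Z}$. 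Identifying $(\tau/(2im))^{1/2}=(-i\tau)^{1/2}/\sqrt{2m}$ yields the stated formula for $h_{j,m}$.

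For $g_{j,m}$ I would run the same Poisson argument but absorb the factor $(-1)^k=e^{\pi i k}$ as a half-integer shift of the frequency variable, replacing $n$ by $n-\tfrac12$ on the Fourier side; the exponents then become $q^{s^2/(16m)}$ with $s=2n-1$ ranging over the odd integers. Reindexing $s=4m\ell+k$ with $k$ odd in $0\le k\le 4m-1$ (here $4m$ is an even integer, so $s$ odd forces $k$ odd) collapses the $\ell$-sum into $h_{k/2,m}(\tau)$ and produces precisely the restricted sum over odd $k$ appearing in the statement. Alternatively one can avoid the direct computation for $g$ by inserting the $h$-transformation into the identity \eqref{g-h-change}. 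I expect the only delicate points to be the determination of the branch of $(-i\tau)^{1/2}$ — fixed by evaluating at $\tau$ on the positive imaginary axis, where every quantity is positive — together with the bookkeeping in the reindexing, both of which are routine once the setup is in place. Of course, as the citation indicates, one may instead simply invoke \cite[Theorem 4.5]{Wakimoto}.
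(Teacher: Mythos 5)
Your proof is correct, but it is genuinely different from what the paper does: the paper offers no proof at all for this lemma, simply invoking \cite[p.~215, Theorem 4.5]{Wakimoto}, so your Poisson-summation argument supplies the content that the paper outsources to the literature. All the key steps check out. For the translation law, your parity argument is right: for odd $k$ one has $mk^2+jk\equiv m+j\pmod{\mathbb{Z}}$ because $k^2-1$ and $k-1$ are even and $2m,2j\in\mathbb{Z}$, so the hypothesis $j+m\in\mathbb{Z}$ kills the extra phase, and the factor $(-1)^k$ is indeed inert. For the inversion, the shifted-Gaussian Fourier transform gives exactly
\begin{align*}
h_{j,m}\left(-\frac{1}{\tau}\right)=\frac{(-i\tau)^{1/2}}{\sqrt{2m}}\sum_{n\in\mathbb{Z}}e^{\pi i jn/m}q^{n^2/(4m)},
\end{align*}
and your reindexing $n=2m\ell+k$ (legitimate since $2m\in\mathbb{N}$) together with $e^{\pi i j n/m}=e^{\pi i jk/m}$ (using $j\in\mathbb{Z}$) reassembles the inner sums into $h_{k,m}(\tau)$; the branch of $(-i\tau)^{1/2}$ is fixed on the imaginary axis as you say. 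The $g$-case via the half-integer frequency shift is also right: the Fourier side runs over $s=2n-1$ odd, $s^2/(16m)=m\bigl(\ell+\tfrac{k/2}{2m}\bigr)^2$ under $s=4m\ell+k$, and $s$ odd forces $k$ odd since $4m$ is even, yielding precisely the restricted sum over $h_{k/2,m}$. One small caveat: your suggested shortcut of deducing the $g$-formula from \eqref{g-h-change} is not quite as immediate as stated — it produces a sum over $h_{k,4m}(\tau)$, and converting to the stated form requires the additional splitting identity $h_{k/2,m}(\tau)=h_{k,4m}(\tau)+h_{4m-k,4m}(\tau)$ plus the symmetries in \eqref{g-h-period}; it works, but your direct computation is cleaner.
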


\section{Identities for Mizuno's rank two examples}\label{sec-exam}
Since the two examples in the $i$-th row ($1\leq i\leq 7$) in \cite[Table 1]{Mizuno} are dual to each other, they will be studied together. Since $c$ is uniquely determined by $A,b,d$ when $(A,b,c,d)$ is a modular quadruple, we will not mention it.

\subsection{Examples 1 and 2}
Example 1 corresponds to
\begin{align}\label{exam-1}
    A = \begin{pmatrix}
        2 & 1\\
        2 & 2
    \end{pmatrix}, \quad 
    b \in \bigg\{
    \begin{pmatrix}
        0\\0
    \end{pmatrix},
    \begin{pmatrix}
        0\\1
    \end{pmatrix},
    \begin{pmatrix}
        -1\\-1
    \end{pmatrix}, 
    \begin{pmatrix}
        -1/2\\0
    \end{pmatrix},
    \begin{pmatrix}
        1\\2
    \end{pmatrix}\bigg\}, \quad d=(1,2).
\end{align}
The modularity of this example follows from the following known identities:
    \begin{align}
        \sum_{i,j\ge 0}\frac{q^{i^2+2ij+2j^2}}{(q;q)_i(q^2;q^2)_j}
        &=\frac{(q^3;q^3)_\infty^2}{(q;q)_\infty(q^6;q^6)_\infty} ,\label{t1-1-1}
        \\
        \sum_{i,j\ge 0}\frac{q^{i^2+2ij+2j^2+j}}{(q;q)_i(q^2;q^2)_j}
        &=\frac{1}{(q;q^2)_\infty} , \label{t1-1-2}
        \\
        \sum_{i,j\ge 0}\frac{q^{i^2+2ij+2j^2-i-j}}{(q;q)_i(q^2;q^2)_j}
        &=(-1;q)_\infty , \label{t1-1-3}
        \\
        \sum_{i,j\ge 0}\frac{q^{2i^2+4ij+4j^2-i}}{(q^2;q^2)_i(q^4;q^4)_j}
        &=(-q;q^2)_\infty , \label{t1-1-4}
        \\
        \sum_{i,j\ge 0}\frac{q^{i^2+2ij+2j^2+i+2j}}{(q;q)_(q^2;q^2)_j}
        &=\frac{(q^6;q^6)^2}{(q^2;q^2)_\infty(q^3;q^3)_\infty} . \label{t1-1-5}
    \end{align}
The identities  \eqref{t1-1-1}, \eqref{t1-1-2} and \eqref{t1-1-5} are special instances of Bressoud's identities \cite{Bressoud1979}. The identities \eqref{t1-1-3} and  \eqref{t1-1-4} follow from the following identity of Cao and Wang \cite[Eq.\ (3.29)]{CW}:
\begin{align}\label{eq-Cao-Wang}
\sum_{i,j\geq 0} \frac{u^{i+2j}q^{i^2+2ij+2j^2-i-j}}{(q;q)_i(q^2;q^2)_j}=(-u;q)_\infty.
\end{align}
Note that with $u=q^a$ ($a\in \mathbb{Q}$) the left side is the Nahm sum corresponding to 
\begin{align}\label{data-exam1-Cao-Wang}
    A = \begin{pmatrix}
        2 & 1\\
        2 & 2
    \end{pmatrix}, \quad 
    b= \begin{pmatrix}
       a-1 \\ 2a-1
    \end{pmatrix}, \quad d=(1,2).
\end{align}

Example 2 is dual to Example 1, and it corresponds to
    \begin{align}\label{dual-exam-1}
       & A=\begin{pmatrix}
            1 & -1/2\\
            -1 & 1
        \end{pmatrix}, ~~
        b \in \bigg\{ \begin{pmatrix}
            0 \\ 0
        \end{pmatrix},
        \begin{pmatrix}
            -1/2 \\ 1
        \end{pmatrix}, 
        \begin{pmatrix}
            -1/2 \\ 0
        \end{pmatrix},
        \begin{pmatrix}
            -1/2 \\ 1/2
        \end{pmatrix} ,
        \begin{pmatrix}
            0 \\ 1
        \end{pmatrix}
        \bigg\}, ~~
    d=(1,2). 
    \end{align}
The modularity follows from the following identities:
        \begin{align}
            \sum_{i,j\ge 0} \frac{q^{i^2-2ij+2j^2}}{(q^2;q^2)_i(q^4;q^4)_j}  &=\frac{(q^2;q^2)_\infty^3(q^3;q^3)_\infty^2}{(q;q)_\infty^2(q^4;q^4)_\infty^2(q^6;q^6)_\infty} , \label{t1-2-1}
            \\
            \sum_{i,j\ge 0}\frac{q^{\frac{1}{2}i^2-ij+j^2-\frac{1}{2}i+j}}{(q;q)_i(q^2;q^2)_j}
            &=(-1,-q;q)_\infty , \label{t1-2-2}
            \\
            \sum_{i,j\ge 0}\frac{q^{\frac{1}{2}i^2-ij+j^2-\frac{1}{2}i}}{(q;q)_i(q^2;q^2)_j}
            &=(-1;q)_\infty^2 , \label{t1-2-3}\\
            \sum_{i,j\ge 0}\frac{q^{i^2-2ij+2j^2-i+j}}{(q^2;q^2)_i(q^4;q^4)_j}
            &=(-1,-q;q^2)_\infty ,\label{t1-2-4} \\
            \sum_{i,j\ge 0}\frac{q^{i^2-2ij+2j^2+2j}}{(q^2;q^2)_i(q^4;q^4)_j}
            &=            \frac{(q^2;q^2)_\infty^2(q^6;q^6)_\infty^2}{(q;q)_\infty(q^3;q^3)_\infty(q^4;q^4)_\infty^2} . \label{t1-2-5}
        \end{align}
The identities  \eqref{t1-2-1} and \eqref{t1-2-5} appear in the work of Li and Wang as \cite[Eqs.\ (3.54) and (3.55)]{LW}. The remaining three identities follow from the following identity \cite[Eq.\ (3.53)]{LW}:
\begin{align}\label{eq-Li-Wang}
\sum_{i,j\geq 0}\frac{q^{i^2-2ij+2j^2-i}u^j}{(q^2;q^2)_i(q^4;q^4)_j}=(-1,-u;q^2)_\infty.
\end{align}
Interestingly, with $u=q^a$ ($a\in \mathbb{Q})$ the Nahm sum in the left side corresponds to 
\begin{align}\label{data-dual-exam1-Cao-Wang}
    A=\begin{pmatrix}
        1 & -1/2 \\ -1 & 1
    \end{pmatrix}, \quad b=\begin{pmatrix}
        -1/2 \\ a
    \end{pmatrix}, \quad d=(1,2),
\end{align}
which is exactly the dual of \eqref{data-exam1-Cao-Wang}.

We find new identities for the following choices of $(A,b,d)$ where $A$ and $d$ are the same with Example 1:
\begin{align}\label{new-exam-1}
             A = \begin{pmatrix}
        2 & 1\\
        2 & 2
\end{pmatrix}, \quad b=\begin{pmatrix}
    (a-3)/2 \\ a
\end{pmatrix}, \quad d=(1,2), \quad a\in \mathbb{Q}.
\end{align}
This corresponds to the special case $u=q^a$ of the following identity. 
\begin{theorem}\label{thm-new-exam1}
We have
\begin{align}
\sum_{i,j\ge 0}\frac{q^{2i^2+4ij+4j^2-3i}u^{i+2j}}{(q^2;q^2)_i(q^4;q^4)_j}
=(1+uq+uq^{-1})(-uq^3;q^2)_\infty.
\end{align}
\end{theorem}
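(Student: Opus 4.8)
The plan is to prove the identity by exhibiting a first-order $q$-difference equation in $u$ satisfied by both sides, and then invoking uniqueness of its power-series solution. Write $g(u)$ for the left-hand side. Applying the second Euler identity in \eqref{euler} with $q$ replaced by $q^2$ and $z=uq^3$ gives $(-uq^3;q^2)_\infty=\sum_{n\ge 0}q^{n^2+2n}u^n/(q^2;q^2)_n$, so the right-hand side becomes
\begin{align*}
R(u):=(1+(q+q^{-1})u)\sum_{n\ge 0}\frac{q^{n^2+2n}u^n}{(q^2;q^2)_n}.
\end{align*}
I would then isolate the common $q$-difference equation
\begin{align*}
(1+(q^3+q)u)\,h(u)=(1+(q^3+q+q^{-1})u+(q^4+q^2)u^2)\,h(q^2u). \tag{$\dagger$}
\end{align*}
That $R$ solves $(\dagger)$ is a direct computation: the telescoping $(-uq^3;q^2)_\infty=(1+uq^3)(-uq^5;q^2)_\infty$ yields $P(u)=(1+uq^3)P(q^2u)$ for $P(u)=(-uq^3;q^2)_\infty$, and substituting this into $R=(1+(q+q^{-1})u)P$ gives $(\dagger)$ after clearing denominators; moreover $R(0)=1$.

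The heart of the proof is to show that $g$ also satisfies $(\dagger)$. Expanding $g(u)=\sum_{m\ge 0}\gamma_m u^m$ with $\gamma_m=\sum_{i+2j=m}q^{2i^2+4ij+4j^2-3i}/\big((q^2;q^2)_i(q^4;q^4)_j\big)$, and using that the constraint $i+2j=m$ forces $q^{2i+4j}=q^{2m}$ to be constant on each coefficient, the equation $(\dagger)$ is equivalent to the three-term recurrence
\begin{align*}
(1-q^{2m})\gamma_m=\big[(q^3+q+q^{-1})q^{2m-2}-(q^3+q)\big]\gamma_{m-1}+(q^4+q^2)q^{2m-4}\gamma_{m-2}
\end{align*}
for all $m\ge 1$ (with $\gamma_{-1}=0$). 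Setting $i=m-2j$, this is in turn equivalent to the finite identity
\begin{align*}
\sum_{j\ge 0}\frac{q^{2m^2-4mj+4j^2-3m+6j}}{(q^2;q^2)_{m-2j}(q^4;q^4)_j}=\frac{q^{m^2+2m}}{(q^2;q^2)_m}+(q+q^{-1})\frac{q^{m^2-1}}{(q^2;q^2)_{m-1}}.
\end{align*}
This finite identity is the step I expect to be the main obstacle. I would prove it by induction on $m$: the cases $m=0,1$ are immediate, and the inductive step amounts to applying the $q$-Pascal relations to the factors $(q^2;q^2)_{m-2j}$ and $(q^4;q^4)_j$ and reindexing the $j$-sum, which is exactly the content of a creative-telescoping (q-Zeilberger) certificate for the left-hand sum. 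I would first search for such a certificate in Maple (the paper already relies on Maple for the companion identities) and then record the resulting elementary verification.

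Granting the finite identity, both $g$ and $R$ solve $(\dagger)$ and satisfy $g(0)=R(0)=1$. Since the coefficient of $u^m$ in $(\dagger)$ reads $(1-q^{2m})\gamma_m=(\text{explicit combination of }\gamma_{m-1},\gamma_{m-2})$ and $1-q^{2m}\ne 0$ for $m\ge 1$, the constant term $1$ determines the entire power series uniquely; hence $g=R$, which is the asserted identity. The surrounding steps (the Euler rewrite, the difference equation for $R$, and the uniqueness argument) are all formal, so essentially all of the real work is concentrated in the single finite $q$-identity above; an alternative, and equivalent, route would be to derive $(\dagger)$ for $g$ from the two elementary index-shift recurrences for the two-variable sum $\sum_{i,j}x^i y^j q^{2i^2+4ij+4j^2-3i}/\big((q^2;q^2)_i(q^4;q^4)_j\big)$, but the diagonal $y=x^2$ is not preserved by those single-index shifts, so that approach leads to a cascade of off-diagonal values and is less clean than the coefficient comparison.
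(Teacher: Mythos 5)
Your scaffolding is sound as far as it goes: the difference equation $(\dagger)$ for $R$, the uniqueness-of-solution argument, and the observation that $(\dagger)$ for $g$ is equivalent (given $\gamma_0=1$) to the finite identity
\begin{align*}
\gamma_m=\sum_{j\ge 0}\frac{q^{2m^2-4mj+4j^2-3m+6j}}{(q^2;q^2)_{m-2j}(q^4;q^4)_j}=\frac{q^{m^2+2m}}{(q^2;q^2)_m}+(q+q^{-1})\frac{q^{m^2-1}}{(q^2;q^2)_{m-1}}
\end{align*}
are all correct. But that finite identity is nothing other than the theorem itself read off coefficient-wise in $u$ (so the $(\dagger)$/uniqueness superstructure is redundant once you have it), and it is exactly the step you leave unproven: you defer it to a $q$-Zeilberger certificate to be searched for in Maple, together with an unspecified application of ``$q$-Pascal relations and reindexing.'' By your own account this is where ``essentially all of the real work is concentrated,'' so the proposal has a genuine gap at its only substantive step; no certificate is exhibited, and the claimed routine inductive step is not routine as stated, since the summand couples $m$ and $j$ through both $q^{2m^2-4mj}$ and $(q^2;q^2)_{m-2j}$.

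The gap closes in one line once you notice that the quadratic form splits along the diagonal: $2i^2+4ij+4j^2-3i=(i+2j)^2+(i^2-3i)$, so $\gamma_m=q^{m^2}c_m$ with $c_m=\sum_{i+2j=m}q^{i^2-3i}/\big((q^2;q^2)_i(q^4;q^4)_j\big)$, and the generating function of the $c_m$ \emph{factors} by Euler's identities \eqref{euler}:
\begin{align*}
\sum_{m\ge 0}c_mx^m=\sum_{i,j\ge 0}\frac{q^{i^2-3i}x^{i+2j}}{(q^2;q^2)_i(q^4;q^4)_j}
=\frac{(-xq^{-2};q^2)_\infty}{(x^2;q^4)_\infty}=\frac{1+xq^{-2}}{(x;q^2)_\infty},
\end{align*}
whence $c_m=\frac{1}{(q^2;q^2)_m}+\frac{q^{-2}}{(q^2;q^2)_{m-1}}$, which is precisely your finite identity after dividing out $q^{m^2}$. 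This is the paper's proof: it then sums $\sum_m q^{m^2}c_mu^m$ as two Euler series to get $(-uq;q^2)_\infty+uq^{-1}(-uq^3;q^2)_\infty=(1+uq+uq^{-1})(-uq^3;q^2)_\infty$, with no difference equation needed. Note also that your closing dismissal of generating-function routes (the diagonal $y=x^2$ ``is not preserved by single-index shifts'') applies only to the two-variable sum carrying the full quadratic form; once $q^{m^2}$ is factored off each diagonal, the single-variable generating function above is exactly the clean mechanism you were missing, and it is what makes the paper's argument complete where yours is not.
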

\begin{proof}
By \eqref{euler} we have
\begin{align}
        &\sum_{i,j\ge 0}\frac{x^{i+2j}q^{i^2-3i}}{(q^2;q^2)_i(q^4;q^4)_j}
        =\frac{(-xq^{-2};q^2)_\infty}{(x^2;q^4)_\infty}
        =\frac{1+xq^{-2}}{(x;q^2)_\infty}\notag
        \\
        &=(1+xq^{-2})\sum_{m\ge 0}\frac{x^m}{(q^2;q^2)_m}
        =1+\sum_{m\ge 1}\left(\frac{1}{(q^2;q^2)_m}+\frac{1}{q^2(q^2;q^2)_{m-1}}\right)x^m.
\end{align}
Comparing the coefficients of $x^m$ on both sides, we deduce that
 \begin{align}
 \sum_{i+2j=m}\frac{q^{i^2-3i}}{(q^2;q^2)_i(q^4;q^4)_j}
 =\frac{1}{(q^2;q^2)_m}+\frac{1}{q^2(q^2;q^2)_{m-1}}
    \end{align}   
where we used the convention that $1/(q;q)_n=0$ for negative $n$.
We have
\begin{align*}
        &\sum_{i,j\ge 0}\frac{q^{2i^2+4ij+4j^2-3i}u^{i+2j}}{(q^2;q^2)_i(q^4;q^4)_j}
        =\sum_{m\ge 0}q^{m^2}u^m
        \sum_{i+2j=m}\frac{q^{i^2-3i}}{(q^2;q^2)_i(q^4;q^4)_j}\notag
        \\
        &=1+\sum_{m\ge 1}q^{m^2}u^m\left(\frac{1}{(q^2;q^2)_m}+\frac{1}{q^2(q^2;q^2)_{m-1}}\right)\notag
        \\
        &=\sum_{m\ge 0}\frac{q^{m^2}u^m}{(q^2;q^2)_m}+
        \sum_{m\ge 0}\frac{q^{(m+1)^2-2}u^{m+1}}{(q^2;q^2)_m}\notag
        \\
        &=
        (-uq;q^2)_\infty+ uq^{-1}(-uq^3;q^2)_\infty 
        =(1+uq+uq^{-1})(-uq^3;q^2)_\infty. \qedhere
\end{align*} 
\end{proof}
The following case is dual to \eqref{new-exam-1}, and they share the same $A,d$ with Example 2:
\begin{align}\label{new-exam-1-dual}
    A=\begin{pmatrix}
            1 & -1/2\\
            -1 & 1
        \end{pmatrix}, \quad b=\begin{pmatrix} -3/2 \\ (a+3)/2 \end{pmatrix}, \quad d=(1,2), \quad a\in \mathbb{Q}.
\end{align}
This corresponds to the following identity with $v=q^a$.
\begin{theorem}\label{thm-new-exam2}
We have
\begin{align}
\sum_{i,j\geq 0} \frac{q^{i^2-2ij+2j^2-3i+3j}v^j}{(q^2;q^2)_i(q^4;q^4)_j}=2q^{-2}(1+qv+q^2)(-q^2,-q^3v;q^2)_\infty.
\end{align}
\end{theorem}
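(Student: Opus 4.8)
The plan is to prove the identity directly, by first carrying out the sum over $i$ for each fixed $j$, in the spirit of the proof of Theorem \ref{thm-new-exam1}. Rearranging the double sum and isolating the $i$-dependent part gives
\begin{align*}
\sum_{i,j\geq 0} \frac{q^{i^2-2ij+2j^2-3i+3j}v^j}{(q^2;q^2)_i(q^4;q^4)_j}=\sum_{j\geq 0}\frac{q^{2j^2+3j}v^j}{(q^4;q^4)_j}\sum_{i\geq 0}\frac{q^{i^2-2ij-3i}}{(q^2;q^2)_i}.
\end{align*}
Writing $q^{i^2-2ij-3i}=q^{i^2-i}\bigl(q^{-2j-2}\bigr)^i$ and applying the second identity in \eqref{euler} with base $q^2$ (so that $q^{i^2-i}=q^{2\binom{i}{2}}$), the inner sum collapses to $(-q^{-2j-2};q^2)_\infty$.

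The next and most delicate step is to normalize the negative powers of $q$ in this product. I would peel off the finitely many factors of $(-q^{-2j-2};q^2)_\infty=\prod_{k\geq 0}\bigl(1+q^{-2j-2+2k}\bigr)$ having nonpositive exponents: the factor at $k=j+1$ contributes the constant $1+q^0=2$, the factors $k=0,\dots,j$ contribute $\prod_{m=1}^{j+1}(1+q^{-2m})$, and the tail $k\geq j+2$ gives $(-q^2;q^2)_\infty$. Reflecting the middle product to positive exponents via $\prod_{m=1}^{j+1}(1+q^{-2m})=q^{-(j+1)(j+2)}(-q^2;q^2)_{j+1}$ yields the key reformulation
\begin{align*}
(-q^{-2j-2};q^2)_\infty=2(-q^2;q^2)_\infty\,q^{-(j+1)(j+2)}(-q^2;q^2)_{j+1}.
\end{align*}
Substituting this back and simplifying the exponent $2j^2+3j-(j+1)(j+2)=j^2-2$, the expression becomes
\begin{align*}
2q^{-2}(-q^2;q^2)_\infty\sum_{j\geq 0}\frac{q^{j^2}v^j(-q^2;q^2)_{j+1}}{(q^4;q^4)_j}.
\end{align*}

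To finish, I would use $(q^4;q^4)_j=(q^2;q^2)_j(-q^2;q^2)_j$ together with $(-q^2;q^2)_{j+1}=(1+q^{2j+2})(-q^2;q^2)_j$ to reduce the summand to $\frac{q^{j^2}v^j(1+q^{2j+2})}{(q^2;q^2)_j}$. Splitting into two sums and applying the second identity in \eqref{euler} once more, to $q^{j^2}=q^{j^2-j}(qv)^j$ in the first and $q^{j^2+2j}=q^{j^2-j}(q^3v)^j$ in the second, produces $(-qv;q^2)_\infty+q^2(-q^3v;q^2)_\infty$. The factorization $(-qv;q^2)_\infty=(1+qv)(-q^3v;q^2)_\infty$ then gives $(1+qv+q^2)(-q^3v;q^2)_\infty$, and restoring the prefactor $2q^{-2}(-q^2;q^2)_\infty$ yields exactly the claimed right-hand side $2q^{-2}(1+qv+q^2)(-q^2,-q^3v;q^2)_\infty$. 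I expect the exponent bookkeeping in the reflection of $(-q^{-2j-2};q^2)_\infty$ to be the only step requiring genuine care; everything else is a routine application of Euler's identities and elementary Pochhammer manipulations.
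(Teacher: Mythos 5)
Your proposal is correct and follows essentially the same route as the paper's own proof: summing over $i$ first via Euler's identity to get $(-q^{-2j-2};q^2)_\infty$, reflecting the nonpositive-exponent factors (which is where the factor $2q^{-2}$ and the shift $2j^2+3j\mapsto j^2$ arise), and then finishing with the split $1+q^{2j+2}=1+q^2\cdot q^{2j}$, two more applications of Euler's identity, and the factorization $(-qv;q^2)_\infty=(1+qv)(-q^3v;q^2)_\infty$. The only difference is cosmetic bookkeeping: the paper keeps $(-q^{-2};q^2)_\infty$ as a prefactor and converts $(-q^{-2-2j};q^2)_j$ to $(-q^4;q^2)_j$, whereas you normalize all negative exponents in a single step, producing $(-q^2;q^2)_{j+1}$ directly.
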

\begin{proof}
By \eqref{euler} we have
\begin{align*}
        &\sum_{i,j\ge 0}\frac{q^{i^2-2ij+2j^2-3i+3j}v^j}{(q^2;q^2)_i(q^4;q^4)_j}
        =\sum_{j\ge 0}\frac{q^{2j^2+3j}v^j}{(q^4;q^4)_j}
        \sum_{i\ge 0}\frac{q^{i^2-i}(q^{-2-2j})^i}{(q^2;q^2)_i}\notag
        \\
        &=\sum_{j\ge 0}\frac{q^{2j^2+3j}v^j(-q^{-2-2j};q^2)_\infty}{(q^4;q^4)_j}
        =(-q^{-2};q^2)_\infty\sum_{j\ge 0}\frac{q^{2j^2+3j}v^j(-q^{-2-2j};q^2)_j}{(q^4;q^4)_j}\notag
        \\
        &=(-q^{-2};q^2)_\infty\sum_{j\ge 0}\frac{q^{j^2}v^j(-q^4;q^2)_j}{(q^4;q^4)_j}
        =(-q^{-2};q^2)_\infty\sum_{j\ge 0}\frac{q^{j^2}v^j(1+q^{2j+2})}{(q^2;q^2)_j(1+q^2)}\notag
        \\
        &=2q^{-2}(-q^2;q^2)_\infty
        \left(\sum_{j\ge 0}\frac{q^{j^2}v^j}{(q^2;q^2)_j} +q^2\sum_{j\ge 0}\frac{q^{j^2+2j}v^j}{(q^2;q^2)_j}\right)\notag
        \\
        &=2q^{-2}(-q^2;q^2)_\infty((-qv;q^2)_\infty+q^2(-q^3v;q^2)_\infty)\\
&=2q^{-2}(1+qv+q^2)(-q^2,-q^3v;q^2)_\infty. \qedhere
    \end{align*}
\end{proof}




  
\subsection{Examples 3 and 4}
Example 3 corresponds to
    \begin{align}\label{exam-2}
        A= \begin{pmatrix}
            1 & 1/2\\
            1 & 1
        \end{pmatrix}, \quad
        b \in \bigg\{
        \begin{pmatrix}
            0 \\ 0
        \end{pmatrix},
        \begin{pmatrix}
            0 \\ 1
        \end{pmatrix},
        \begin{pmatrix}
            1 \\ 1
        \end{pmatrix}
        \bigg\}, \quad
        d=(1,2) . 
    \end{align}
Recall the functions $F_\sigma(u,v;q)$ defined in \eqref{Fc-defn}. Let
\begin{align}
&F(u,v;q):=F_0(u,v;q)+F_1(u,v;q)=\sum_{n_1,n_2\geq 0} \frac{q^{\frac{1}{2}n_1^2+n_1n_2+n_2^2}u^{n_1}v^{n_2}}{(q;q)_{n_1}(q^2;q^2)_{n_2}}. \label{exam2-F-defn}
\end{align}
The modularity of this example follows from the identities  \cite[Eqs.\ (3.42), (3.43) and (3.44)]{LW}:
        \begin{align}
            \sum_{i,j\ge 0}\frac{q^{i^2+2ij+2j^2}}{(q^2;q^2)_i(q^4;q^4)_j}
            &=\frac{(-q;q^2)_\infty(q^3,q^4,q^7;q^7)_\infty}{(q^2;q^2)_\infty} , \label{t1-3-1} 
            \\
            \sum_{i,j\ge 0}\frac{q^{i^2+2ij+2j^2+2j}}{(q^2;q^2)_i(q^4;q^4)_j}
            &=
            \frac{(-q;q^2)_\infty(q^2,q^5,q^7;q^7)}{(q^2;q^2)_\infty} , \label{t1-3-2}
            \\
            \sum_{i,j\ge 0}\frac{q^{i^2+2ij+2j^2+2i+2j}}{(q^2;q^2)_i(q^4;q^4)_j}&=
            \frac{(-q;q^2)_\infty(q,q^6,q^7;q^7)_\infty}{(q^2;q^2)_\infty} . \label{t1-3-3}
        \end{align}

Now we give a proof for Theorem \ref{thm-parity}.
\begin{proof}[Proof of Theorem \ref{thm-parity}]
For $\sigma \in\{0,1\}$, we have
\begin{align}
&F_\sigma(u,v;q)=\sum_{\begin{smallmatrix}
n_1\equiv \sigma \!\! \pmod{2} \\ n_1\geq 0 
\end{smallmatrix}} \frac{q^{\frac{1}{2}n_1^2}u^{n_1}}{(q;q)_{n_1}}(-vq^{n_1+1};q^2)_\infty \nonumber \\
&=\sum_{k\geq 0} \frac{q^{\frac{1}{2}(2k+\sigma)^2}u^{2k+\sigma}}{(q;q)_{2k+\sigma}}(-vq^{2k+\sigma+1};q^2)_\infty. \label{Fc-start}
\end{align}

We have
\begin{align*}
&F_0(1,1;q)=\sum_{k=0}^\infty  \frac{q^{2k^2}}{(q;q)_{2k}}(-q^{2k+1};q^2)_\infty =(-q;q^2)_\infty \sum_{k=0}^\infty \frac{q^{2k^2}}{(q;q)_{2k}(-q;q^2)_k} \nonumber \\
&=(-q;q^2)_\infty \sum_{k=0}^\infty \frac{q^{2k^2}}{(q^2;q^4)_k(q^2;q^2)_k}.
\end{align*}
By \eqref{S61} we obtain \eqref{r1}.

We have
\begin{align*}
&F_1(1,1;q)=q^{\frac{1}{2}}\sum_{k=0}^\infty \frac{q^{2k^2+2k}}{(q;q)_{2k+1}}(-q^{2k+2};q^2)_\infty =
q^{\frac{1}{2}}(-q^2;q^2)_\infty \sum_{k=0}^\infty \frac{q^{2k^2+2k}}{(q;q)_{2k+1}(-q^2;q^2)_k} \nonumber \\
&=q^{\frac{1}{2}}(-q^2;q^2)_\infty \sum_{k=0}^\infty \frac{q^{2k^2+2k}}{(q;q^2)_{k+1}(q^4;q^4)_k}.
\end{align*}
By \eqref{S31} we obtain \eqref{r2}.

We have
\begin{align*}
&F_0(1,q;q)=\sum_{k=0}^\infty \frac{q^{2k^2}}{(q;q)_{2k}}(-q^{2k+2};q^2)_\infty
=(-q^2;q^2)_\infty \sum_{k=0}^\infty \frac{q^{2k^2}}{(q;q^2)_k(q^4;q^4)_k}.
\end{align*}
By \eqref{S33} we obtain \eqref{r3}.

We have 
\begin{align*}
&F_1(1,q;q)=q^{\frac{1}{2}}\sum_{k=0}^\infty \frac{q^{2k^2+2k}}{(q;q)_{2k+1}}(-q^{2k+3};q^2)_\infty
=q^{\frac{1}{2}}(-q;q^2)_\infty \sum_{k=0}^\infty \frac{q^{2k^2+2k}}{(q;q)_{2k+1}(-q;q^2)_{k+1}} \nonumber \\
&=q^{\frac{1}{2}}(-q;q^2)_\infty \sum_{k=0}^\infty \frac{q^{2k^2+2k}}{(q^2;q^2)_k(q^2;q^4)_{k+1}}.
\end{align*}
By \eqref{S60} we obtain \eqref{r4}.

We have
\begin{align*}
&F_0(q,q;q)=\sum_{k=0}^\infty \frac{q^{2k^2+2k}}{(q;q)_{2k}}(-q^{2k+2};q^2)_\infty 
=(-q^2;q^2)_\infty \sum_{k=0}^\infty \frac{q^{2k^2+2k}}{(q;q)_{2k}(-q^2;q^2)_k} \nonumber \\
&=(-q^2;q^2)_\infty \sum_{k=0}^\infty \frac{q^{2k^2+2k}}{(q,-q^2;q^2)_k(q^2;q^2)_k}.
\end{align*}
By \eqref{S32} we obtain \eqref{r5}.

We have
\begin{align*}
&F_1(q,q;q)=q^{\frac{3}{2}}\sum_{k=0}^\infty \frac{q^{2k^2+4k}}{(q;q)_{2k+1}}(-q^{2k+3};q^2)_\infty =q^{\frac{3}{2}}(-q;q^2)_\infty \sum_{k=0}^\infty \frac{q^{2k^2+4k}}{(q;q)_{2k+1}(-q;q^2)_{k+1}} \nonumber \\
&=q^{\frac{3}{2}}(-q;q^2)_\infty \sum_{k=0}^\infty \frac{q^{2k^2+4k}}{(q^2;q^2)_k(q^2;q^4)_{k+1}}.
\end{align*}
By \eqref{S59} we obtain \eqref{r6}.
\end{proof}

To prove Mizuno's conjectural formula \eqref{eq-conj-tran}, we first study the modular transformation formulas satisfied by the vector-valued function
\begin{align}
 V(\tau):=&\Big(q^{-3/56}F_0(1,1;-q),-\zeta_4q^{1/56}F_1(1,-q;-q),\zeta_4q^{9/56}F_1(-q,-q;-q),\nonumber \\
    &\quad -\zeta_4q^{-3/56}F_1(1,1;-q),q^{1/56}F_0(1,-q;-q),q^{9/56}F_0(-q,-q;-q)\Big)^\mathrm{T}.
\end{align} 
Note that except for factors of roots of unity, $V(\tau)$ is essentially obtained from $U(\tau)$ by replacing $q$ by $-q$. The reordering will make the transformation formulas look simpler.
We will show that $U(\tau)$ and $V(\tau)$ are vector-valued modular functions on $\Gamma_0(2)$ and $\Gamma_0(4)$, respectively.

\begin{theorem}\label{thm-vector}
We have
\begin{align}\label{V-tran}
   & V(\tau+1)=\Lambda^4 V(\tau), \quad   V\left(-\frac{1}{4\tau}\right)=\begin{pmatrix}
        0 & W \\ W^\mathrm{T} & 0 
    \end{pmatrix}V(\tau), \\
&V\left(\frac{\tau}{4\tau+1} \right)=\begin{pmatrix}
        WP^{-4}W^\mathrm{T} & 0 \\ 0 & W^\mathrm{T} P^{-4}W 
    \end{pmatrix}V(\tau). \label{V-tran-third}
\end{align}
where $M$ is given in \eqref{eq-M-defn}, $P, \Lambda$ and $W$ are given in Theorem \ref{thm-conj}. 
As a consequence, $(V_1(\tau),V_2(\tau),V_3(\tau))^\mathrm{T}$, $(V_4(\tau),V_5(\tau),V_6(\tau))^\mathrm{T}$ and $V(\tau)$ are vector-valued modular functions on $\Gamma_0(4)$.
\end{theorem}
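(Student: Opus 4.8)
The plan is to express each entry of $V(\tau)$ as a power of $q$ times a ratio of classical modular objects — a Weber function in the variable $2\tau$, the Dedekind eta function $\eta(2\tau)$, and a single theta series of the form $g_{j,m}$ — and then to read off the three transformation laws from the rules recorded in Section~\ref{sec-pre}. First I would feed the six product formulas of Theorem~\ref{thm-parity} into the definition of $V$, performing the substitution $q\mapsto-q$. This turns $(-q;q^2)_\infty$ into $(q;q^2)_\infty=q^{1/24}\mathfrak f_1(2\tau)$, leaves $(-q^2;q^2)_\infty=q^{-1/12}\mathfrak f_2(2\tau)$ and $(q^2;q^2)_\infty=q^{-1/12}\eta(2\tau)$ essentially untouched, and, via the Jacobi triple product \eqref{Jacobi}, converts each remaining triple product into a theta series $g_{j,m}$. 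After collecting the fractional powers of $q$ — which, together with the normalizing factors built into $V$, cancel to integer exponents — one finds the clean shape
\begin{align*}
V_i(\tau)=\frac{\mathfrak f_1(2\tau)}{\eta(2\tau)}\,g_{j_i,m_i}(\tau)\quad(i=1,2,3),\qquad V_i(\tau)=\frac{\mathfrak f_2(2\tau)}{\eta(2\tau)}\,g_{j_i,m_i}(\tau)\quad(i=4,5,6),
\end{align*}
so that the first and second coordinate blocks carry the common Weber factors $\mathfrak f_1(2\tau)$ and $\mathfrak f_2(2\tau)$ respectively, and within a block the entries differ only through their theta indices. This block structure is exactly what the reordering and roots of unity in the definition of $V$ are engineered to produce.

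The translation law $V(\tau+1)=\Lambda^4V(\tau)$ is then immediate and diagonal: the Weber factors transform by \eqref{Weber-2}, $\eta(2\tau)$ by \eqref{eta-tran}, and each theta series by \eqref{fg-translation}, and I would check that the accumulated phases are precisely the diagonal entries of $\Lambda^4$. The inversion law (the second identity in \eqref{V-tran}) is the crux. Under $\tau\mapsto-1/(4\tau)$ the Weber argument $2\tau$ becomes $-1/(2\tau)$, so \eqref{Weber-1} and \eqref{eta-tran} apply verbatim and produce both the $\sqrt2$ factors and the interchange $\mathfrak f_1\leftrightarrow\mathfrak f_2$ that swaps the two blocks. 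For the theta series I would use the doubling relations \eqref{hg-double} (and \eqref{g-h-change} where needed) to recast them at argument $2\tau$, turning the level-$4$ inversion $\tau\mapsto-1/(4\tau)$ into the standard inversion $\rho\mapsto-1/\rho$ at $\rho=2\tau$, to which Lemma~\ref{lem-modular} applies. The inversion yields a Gauss-type sum $\sum_k e^{\pi i j_i k/m}h_{k,\cdot}$, and the periodicity relations \eqref{g-h-period} collapse it onto the three basic theta functions after pairing each $k$ with its reflection; the paired coefficients combine into $2\cos$, and the complementary-angle identity converts these into the values $\sin(k\pi/7)$.

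Regrouping those three theta functions via \eqref{g-h-change} into the series of the opposite block, and verifying that all numerical constants collapse to the entries $\sqrt2\,\alpha_k$ of $W$, is the main obstacle: one must track the $(-i\tau)^{1/2}$ automorphy factors so that they cancel against $\eta(2\tau)$ (confirming weight $0$), and check that the residual roots of unity and $\sqrt2$'s assemble into precisely the real matrix $W$ of Theorem~\ref{thm-conj}. The third law then requires no new $q$-series input. As Möbius maps one has $\tfrac{\tau}{4\tau+1}=S_4T^{-1}S_4\,\tau$, where $S_4\colon\tau\mapsto-1/(4\tau)$ is an involution and $T\colon\tau\mapsto\tau+1$; composing the inversion and translation laws and writing $N=\left(\begin{smallmatrix}0&W\\ W^{\mathrm T}&0\end{smallmatrix}\right)$ gives
\begin{align*}
V\!\left(\frac{\tau}{4\tau+1}\right)=N\Lambda^{-4}N\,V(\tau)=\begin{pmatrix}WP^{-4}W^{\mathrm T}&0\\0&W^{\mathrm T}P^{-4}W\end{pmatrix}V(\tau),
\end{align*}
which is \eqref{V-tran-third}.

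Finally, to conclude modularity I would use that $\Gamma_0(4)$ is generated modulo $\{\pm I\}$ by $T$ and $L=\left(\begin{smallmatrix}1&0\\4&1\end{smallmatrix}\right)$, with $L\tau=\tfrac{\tau}{4\tau+1}$. The laws for $V(\tau+1)$ and $V(\tau/(4\tau+1))$ fix the action of these two free generators; since $-I$ fixes every $\tau$, the assignment extends to a representation $\rho$ of $\Gamma_0(4)$, exhibiting $V$ (holomorphic on $\mathbb{H}$, meromorphic at the cusps) as a vector-valued modular function of weight $0$. Because both $\Lambda^4$ and the matrix in \eqref{V-tran-third} are block diagonal — only the Fricke matrix, which lies outside $\Gamma_0(4)$, mixes the two blocks — the subrepresentations on the first and last three coordinates show that $(V_1,V_2,V_3)^{\mathrm T}$ and $(V_4,V_5,V_6)^{\mathrm T}$ are themselves vector-valued modular functions on $\Gamma_0(4)$.
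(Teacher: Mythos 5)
Your proposal is correct and follows essentially the same route as the paper: rewriting each $V_i$ via Theorem \ref{thm-parity} and the Jacobi triple product as $\mathfrak{f}_{1}(2\tau)/\eta(2\tau)$ or $\mathfrak{f}_{2}(2\tau)/\eta(2\tau)$ times a theta series $g_{j,7}$, deducing the translation and inversion laws from \eqref{eta-tran}--\eqref{Weber-2}, \eqref{fg-translation} and Lemma \ref{lem-modular} (with the same pairing into $2\cos$ terms and conversion to the $\alpha_k$), obtaining the third law by the composition $V(\tau/(4\tau+1))=H\Lambda^{-4}HV(\tau)$, and concluding via the generators of $\Gamma_0(4)$. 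The only cosmetic difference is that the paper records the theta arguments explicitly as $g_{j,7}(2\tau)$ for the first block and $g_{j,7}(\tau/2)$ for the second, which your generic indices $g_{j_i,m_i}(\tau)$ absorb implicitly.
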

\begin{proof}[Proof of Theorem \ref{thm-vector}]
For convenience, we denote the $i$-th component of $V(\tau)$ by $V_i(\tau)$. By Theorem \ref{thm-parity} and \eqref{Jacobi} we have
\begin{align}
V_1(\tau)=q^{-\frac{3}{56}}\frac{(q;q^2)_\infty}{(q^2;q^2)_\infty}\sum_{n=-\infty}^\infty (-1)^nq^{14n^2+2n}=\frac{\mathfrak{f}_1(2\tau)}{\eta(2\tau)}g_{1,7}(2\tau),  \label{V1-g}\\
V_2(\tau)=q^{\frac{29}{56}}\frac{(q;q^2)_\infty}{(q^2;q^2)_\infty} \sum_{n=-\infty}^\infty (-1)^nq^{14n^2+6n}=\frac{\mathfrak{f}_1(2\tau)}{\eta(2\tau)}g_{3,7}(2\tau), \label{V2-g} \\
V_3(\tau)=q^{\frac{93}{56}}\frac{(q;q^2)_\infty}{(q^2;q^2)_\infty}\sum_{n=-\infty}^\infty (-1)^nq^{14n^2+10n} =\frac{\mathfrak{f}_1(2\tau)}{\eta(2\tau)} g_{5,7}(2\tau),  \label{V3-g} \\
V_4(\tau)=q^{\frac{25}{56}}\frac{(-q^2;q^2)_\infty}{(q^2;q^2)_\infty}\sum_{n=-\infty}^\infty (-1)^nq^{\frac{7}{2}n^2+\frac{5}{2}n}=\frac{\mathfrak{f}_2(2\tau)}{\eta(2\tau)}g_{5,7}\left(\frac{\tau}{2}\right), \label{V4-g} \\
V_5(\tau)=q^{\frac{1}{56}}\frac{(-q^2;q^2)_\infty}{(q^2;q^2)_\infty}\sum_{n=-\infty}^\infty (-1)^nq^{\frac{7}{2}n^2+\frac{1}{2}n}=\frac{\mathfrak{f}_2(2\tau)}{\eta(2\tau)}g_{1,7}\left(\frac{\tau}{2}\right),  \label{V5-g} \\
V_6(\tau)=q^{\frac{9}{56}}\frac{(-q^2;q^2)_\infty}{(q^2;q^2)_\infty}\sum_{n=-\infty}^\infty (-1)^nq^{\frac{7}{2}n^2+\frac{3}{2}n}=\frac{\mathfrak{f}_2(2\tau)}{\eta(2\tau)}g_{3,7}\left(\frac{\tau}{2}\right). \label{V6-g}
\end{align}
Using \eqref{eta-tran}, \eqref{Weber-2} and \eqref{fg-translation}, it is easy to prove the first transformation formula in \eqref{V-tran}.

Applying Lemma \ref{lem-modular} and \eqref{Weber-1}, we deduce that for $j\in \{1,2,3\}$,
\begin{align}
&V_j\left(-\frac{1}{4\tau}\right)=\frac{\mathfrak{f}_1(-1/(2\tau))}{\eta(-1/(2\tau))}g_{2j-1,7}\left(-\frac{1}{2\tau}\right)=\frac{1}{\sqrt{7}}\frac{\mathfrak{f}_2(2\tau)}{\eta(2\tau)}\sum_{k=0}^{13} e^{\frac{\pi i(2k+1)(2j-1)}{14}}h_{k+\frac{1}{2},7}(2\tau) \nonumber \\
&=\frac{1}{\sqrt{7}}\frac{\mathfrak{f}_2(2\tau)}{\eta(2\tau)}\sum_{k=0}^{6} \left( e^{\frac{\pi i(2k+1)(2j-1)}{14}} h_{k+\frac{1}{2},7}(2\tau)+e^{\frac{\pi i(2(13-k)+1)(2j-1)}{14}}h_{13-k+\frac{1}{2},7}(2\tau) \right)\nonumber \\
&=\frac{1}{\sqrt{7}}\frac{\mathfrak{f}_2(2\tau)}{\eta(2\tau)}\sum_{k=0}^{6} 2\cos \frac{(2k+1)(2j-1)\pi}{14} h_{k+\frac{1}{2},7}(2\tau) \quad \text{(by \eqref{g-h-period})} \nonumber \\
&=\frac{2}{\sqrt{7}}\frac{\mathfrak{f}_2(2\tau)}{\eta(2\tau)}\sum_{k=0}^{2} \cos \frac{(2k+1)(2j-1)\pi}{14} \left(h_{k+\frac{1}{2},7}(2\tau)-h_{\frac{13}{2}-k,7}(2\tau)  \right) \nonumber \\
&=\frac{2}{\sqrt{7}}\frac{\mathfrak{f}_2(2\tau)}{\eta(2\tau)}\sum_{k=0}^{2} \cos \frac{(2k+1)(2j-1)\pi}{14} \left(h_{4k+2,28}\left(\frac{\tau}{2}\right)-h_{26-4k,28}\left(\frac{\tau}{2}\right)  \right) \nonumber \\
&=\frac{2}{\sqrt{7}}\frac{\mathfrak{f}_2(2\tau)}{\eta(2\tau)}\sum_{k=0}^{2} \cos \frac{(2k+1)(2j-1)\pi}{14} g_{2k+1,7}\left(\frac{\tau}{2}\right). \quad \text{(by \eqref{g-h-change})} 
 \label{V1-tran}
\end{align}
Setting $j=1,2,3$ we obtain
\begin{align}
V_1\left(-\frac{1}{4\tau}\right)&=\frac{2}{\sqrt{7}}\left(\sin \frac{\pi}{7} V_4(\tau)+\sin \frac{3\pi}{7} V_5(\tau)+\sin \frac{2\pi}{7} V_6(\tau)\right), \nonumber \\
V_2\left(-\frac{1}{4\tau}\right)&=\frac{2}{\sqrt{7}}\left(-\sin \frac{3\pi}{7} V_4(\tau)+\sin \frac{2\pi}{7} V_5(\tau)-\sin \frac{\pi}{7} V_6(\tau)\right),  \\
V_3\left(-\frac{1}{4\tau}\right)&=\frac{2}{\sqrt{7}}\left(\sin \frac{2\pi}{7} V_4(\tau)+\sin \frac{\pi}{7} V_5(\tau)-\sin \frac{3\pi}{7} V_6(\tau)\right). \nonumber
\end{align}
Similarly, we can deduce that
\begin{align}
V_4\left(-\frac{1}{4\tau}\right)&=\frac{2}{\sqrt{7}}\left(\sin \frac{\pi}{7} V_1(\tau)-\sin \frac{3\pi}{7} V_2(\tau)+\sin \frac{2\pi}{7} V_3(\tau)\right), \label{V4-tran} \\
V_5\left(-\frac{1}{4\tau}\right)&=\frac{2}{\sqrt{7}}\left(\sin \frac{3\pi}{7} V_1(\tau)+\sin \frac{2\pi}{7} V_2(\tau)+\sin \frac{\pi}{7} V_3(\tau)\right), \nonumber \\
V_6\left(-\frac{1}{4\tau}\right)&=\frac{2}{\sqrt{7}}\left(\sin \frac{2\pi}{7} V_1(\tau)-\sin \frac{\pi}{7} V_2(\tau)-\sin \frac{3\pi}{7} V_3(\tau)\right). \nonumber
\end{align}
This proves the second transformation formula. 

We denote $H=\left(\begin{smallmatrix}
    0 & W \\ W^\mathrm{T} & 0
\end{smallmatrix}\right)$. By the second formula in \eqref{V-tran} we have
\begin{align*}
&V\left(\frac{\tau}{4\tau+1} \right)=HV\left(-\frac{4\tau+1}{4\tau} \right)=HV\left(-1-\frac{1}{4\tau}\right) \\
&=H\Lambda^{-4}V\left(-\frac{1}{4\tau}\right)=H\Lambda^{-4}HV(\tau).
\end{align*}

Since  $\Gamma_0(4)$ is generated by $\pm\left(\begin{smallmatrix}
    1 & 1 \\ 0  &1 
\end{smallmatrix}\right)$ and $\pm\left(\begin{smallmatrix}
    1 & 0 \\ 4 & 1
\end{smallmatrix}\right)$, the last assertion follows. 
\end{proof}

Now we are able to prove Theorem \ref{thm-conj}.
\begin{proof}[Proof of Theorem \ref{thm-conj}]
We denote the $i$-th component of $U(\tau)$ as $U_i(\tau)$. Note that replacing $q$ by $-q$ corresponds to replacing $\tau$ by $\tau+\frac{1}{2}$. Therefore, we have
\begin{align}\label{U-V-relation}
    U(\tau)=\Lambda^{-1}V\left(\tau+\frac{1}{2}\right).
\end{align}
It follows from \eqref{V1-g}--\eqref{V3-g} that
\begin{align}
    U_1(\tau)=\frac{\mathfrak{f}(2\tau)}{\eta(2\tau)}g_{1,7}(2\tau), \quad U_2(\tau)=\frac{\mathfrak{f}(2\tau)}{\eta(2\tau)}g_{3,7}(2\tau), \quad 
    U_3(\tau)=\frac{\mathfrak{f}(2\tau)}{\eta(2\tau)}g_{5,7}(2\tau). 
\end{align}

Conjecture \ref{conj-Mizuno} is equivalent to 
\begin{align}\label{U-M-transform}
\begin{pmatrix}
U_1 \\ U_2 \\ U_3
\end{pmatrix}\left(-\frac{1}{4\tau}\right)=M\begin{pmatrix}
U_1+U_4 \\ U_2+U_5 \\ U_3+U_6
\end{pmatrix}(2\tau),  \quad \begin{pmatrix}
U_4 \\ U_5 \\ U_6
\end{pmatrix}\left(-\frac{1}{4\tau}\right)=M\begin{pmatrix}
U_1-U_4 \\ U_2-U_5 \\ U_3-U_6
\end{pmatrix}(2\tau).
\end{align}
Arguing similarly as in \eqref{V1-tran}, we have
\begin{align}
&U_1\left(-\frac{1}{4\tau}\right)=\frac{\mathfrak{f}(2\tau)}{\eta(2\tau)}\left(\alpha_3 g_{1,7}\left(\frac{\tau}{2}\right)+\alpha_2 g_{3,7}\left(\frac{\tau}{2}\right)+\alpha_1g_{5,7}\left(\frac{\tau}{2}\right)\right), \label{U1-tran} \\
&U_2\left(-\frac{1}{4\tau}\right)=\frac{\mathfrak{f}(2\tau)}{\eta(2\tau)}\left(\alpha_2g_{1,7}(\frac{\tau}{2})-\alpha_1 g_{3,7}\left(\frac{\tau}{2}\right)-\alpha_3g_{5,7}\left(\frac{\tau}{2}\right)\right), \label{U2-tran} \\
&U_3\left(-\frac{1}{4\tau}\right)=\frac{\mathfrak{f}(2\tau)}{\eta(2\tau)}\left(\alpha_1 g_{1,7}\left(\frac{\tau}{2}\right)-\alpha_3 g_{3,7}\left(\frac{\tau}{2}\right)+\alpha_2g_{5,7}\left(\frac{\tau}{2}\right)\right). \label{U3-tran} 
\end{align}
We claim that
\begin{align}
\frac{\mathfrak{f}(2\tau)}{\eta(2\tau)}g_{1,7}(\frac{\tau}{2})=U_1(2\tau)+U_4(2\tau), \label{U-sum-1}\\
\frac{\mathfrak{f}(2\tau)}{\eta(2\tau)}g_{3,7}(\frac{\tau}{2})=U_2(2\tau)+U_5(2\tau),  \label{U-sum-2} \\
\frac{\mathfrak{f}(2\tau)}{\eta(2\tau)}g_{5,7}(\frac{\tau}{2})=U_3(2\tau)+U_6(2\tau). \label{U-sum-3}
\end{align}
In fact, by definition we have
\begin{align*}
&U_1(2\tau)+U_4(2\tau)=q^{-3/28}(F_0(1,1;q^2)+F_1(1,1;q^2))=q^{-3/28}F(1,1;q^2), \\
&U_2(2\tau)+U_5(2\tau)=q^{1/28}(F_0(1,q^2;q^2)+F_1(1,q^2;q^2))=q^{1/28}F(1,q^2;q^2),\\
&U_3(2\tau)+U_6(2\tau)=q^{9/28}(F_0(q^2,q^2;q^2)+F_1(q^2,q^2;q^2))=q^{9/28}F(q^2,q^2;q^2).
\end{align*}
By \eqref{t1-3-1}--\eqref{t1-3-3}, we see that the right sides of the above identities are exactly the left sides of \eqref{U-sum-1}, \eqref{U-sum-2} and \eqref{U-sum-3}, respectively. Hence \eqref{U-sum-1}--\eqref{U-sum-3} hold and we prove the first transformation formula in \eqref{U-M-transform}.

Next, replacing $\tau$ by $-1/(8\tau)$ in \eqref{U-sum-1}--\eqref{U-sum-3}, arguing similarly as in \eqref{V1-tran}, we deduce that
\begin{align}
&U_1\left(-\frac{1}{4\tau}\right)+U_4\left(-\frac{1}{4\tau}\right)=\frac{\mathfrak{f}(-1/4\tau)}{\eta(-1/4\tau)}g_{1,7}\left(-\frac{1}{16\tau}\right)  \nonumber \\
&=2\Big(\alpha_3 U_1(2\tau)+\alpha_2 U_2(2\tau)+\alpha_1 U_3(2\tau)\Big), \label{U1-U4-tran} \\
&U_2\left(-\frac{1}{4\tau}\right)+U_5\left(-\frac{1}{4\tau}\right)=\frac{\mathfrak{f}(-1/4\tau)}{\eta(-1/4\tau)}g_{3,7}\left(-\frac{1}{16\tau}\right)  \nonumber \\
&=2\Big(\alpha_2 U_1(2\tau)-\alpha_1 U_2(2\tau)-\alpha_3 U_3(2\tau)\Big), \label{U2-U5-tran} \\
&U_3\left(-\frac{1}{4\tau}\right)+U_6\left(-\frac{1}{4\tau}\right)=\frac{\mathfrak{f}(-1/4\tau)}{\eta(-1/4\tau)}g_{5,7}\left(-\frac{1}{16\tau}\right)  \nonumber \\
&=2\Big(\alpha_1 U_1(2\tau)-\alpha_3 U_2(2\tau)+\alpha_2 U_3(2\tau)\Big). \label{U3-U6-tran}
\end{align}
This implies
\begin{align}\label{U1-U4-matrix}
    \begin{pmatrix}
        U_1+U_4 \\ U_2+U_5 \\ U_3+U_6
    \end{pmatrix}\left(-\frac{1}{4\tau}\right)=2M\begin{pmatrix}
        U_1 \\ U_2 \\ U_3
    \end{pmatrix}(2\tau). 
\end{align}
Subtracting it by the first formula in \eqref{U-M-transform}, we obtain the second transformation formula in \eqref{U-M-transform}.

By Theorem \ref{thm-vector} and \eqref{U-V-relation} we have
\begin{align}
    U(\tau+1)=\Lambda^{-1}V\left(\tau+\frac{3}{2}\right)=\Lambda^3 V\left(\tau+\frac{1}{2}\right)=\Lambda^4 U(\tau).
\end{align}
Next, 
\begin{align*}
&U\left(\frac{\tau}{2\tau+1}\right)=U\left(-\frac{1}{2(2\tau+1)}+\frac{1}{2}\right)=\Lambda^{-1}V\left( -\frac{1}{2(2\tau+1)}+1 \right) \nonumber \\
&=\Lambda^3 V\left( -\frac{1}{2(2\tau+1)} \right)=\Lambda^3 \begin{pmatrix}
    0 & W \\ W^{\mathrm{T}} & 0
\end{pmatrix} V\left(\tau+\frac{1}{2}\right)=\Lambda^3 \begin{pmatrix}
    0 & W \\ W^{\mathrm{T}} & 0 
\end{pmatrix}\Lambda U(\tau) \nonumber \\
&=\begin{pmatrix}
    0 & P^3 WP \\ P^3 W^{\mathrm{T}} P & 0
\end{pmatrix}U(\tau).
\end{align*}
Since $\Gamma_0(2)$ is generated by $\left(\begin{smallmatrix}
    1 &  1  \\ 0 &1
\end{smallmatrix}\right)$ and $\left(\begin{smallmatrix}
 1 & 0 \\ 2 & 1   
\end{smallmatrix}\right)$, we know that $U(\tau)$ is a vector-valued modular function on $\Gamma_0(2)$.
\end{proof}


Example 4 (the dual of Example 3) corresponds to
\begin{align}
    A=\begin{pmatrix}
        2 & -1 \\ -2 & 2
    \end{pmatrix} , \quad
    b \in \bigg\{ 
    \begin{pmatrix}
        0 \\0
    \end{pmatrix} , 
    \begin{pmatrix}
        -1 \\ 2
    \end{pmatrix} ,
    \begin{pmatrix}
        1 \\ 0
    \end{pmatrix} 
    \bigg \} , \quad
    d=(1,2) .  
\end{align}
We establish the following identities to prove its modularity.
\begin{theorem}
We have
\begin{align}
    &\sum_{i,j\ge 0}\frac{q^{i^2-2ij+2j^2}}{(q;q)_i(q^2;q^2)_j}=\frac{J_2^6J_{28}^3}{J_1^4J_4^2J_{4,28}J_{6,28}J_{8,28}}-2q\frac{J_4^2J_{4,28}J_{5,14}}{J_1^2J_2J_{28}} \label{12-1-a} \\
    &= 
     \frac{J_4^5J_{28}J_{6,56}J_{16,56}J_{22,56}}{J_{2}^4J_8^2J_{56}^{3}}
      +2q\frac{J_4J_8J_{56}^3}{J_{2,4}J_{4,8}J_{4,56}J_{16,56}J_{24,56}} , \label{12-1-b} \\
    &\sum_{i,j\ge 0}\frac{q^{i^2-2ij+2j^2-i+2j}}{(q;q)_i(q^2;q^2)_j}=2\frac{J_4^2J_{1,14}J_{12,28}}{J_1^2J_2J_{28}}-q\frac{J_2^6J_{28}^3}{J_1^4J_4^2J_{4,28}J_{10,28}J_{12,28}} \label{12-2-a} 
    \\
   & =2\frac{J_8J_{56}J_{24,56}}{J_2^2J_{12,56}}+q\frac{J_4^5J_{28}
   J_{8,56}J_{10,56}J_{18,56}}{J_{2}^4J_{8}^2J_{56}^{3}} ,
    \label{12-2-b} \\
    &\sum_{i,j\ge 0}\frac{q^{i^2-2ij+2j^2+i}}{(q;q)_i(q^2;q^2)_j}=2\frac{J_4^3J_{14}J_{3,28}J_{11,28}}{J_1^2J_2J_{28}J_{4,28}J_{12,28}}-\frac{J_2^6J_{28}^3}{J_1^4J_4^2J_{2,28}J_{8,28}J_{12,28}} ,  \label{12-3-a} \\
    &=  
    \frac{J_{4}^5J_{28}J_{2,56}J_{24,56}J_{26,56}}{J_2^4J_8^2J_{56}^3}
    +2q^3\frac{J_8J_{56}J_{16,56}}{J_2^2J_{20,56}}. \label{12-3-b}
\end{align}
\end{theorem}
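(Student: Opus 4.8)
My plan is to prove the three ``(a)'' identities \eqref{12-1-a}, \eqref{12-2-a} and \eqref{12-3-a} by a single contour-integral argument based on Lemma \ref{lem-integral}, and then to obtain the ``(b)'' forms from them. I would treat the three cases uniformly through the family
\[
S(b_1,b_2):=\sum_{i,j\ge 0}\frac{q^{i^2-2ij+2j^2+b_1i+b_2j}}{(q;q)_i(q^2;q^2)_j},
\]
so that \eqref{12-1-a}, \eqref{12-2-a} and \eqref{12-3-a} are the instances $(b_1,b_2)=(0,0),(-1,2),(1,0)$. The only obstruction to collapsing either inner series directly by Euler's identities \eqref{euler} is the Gaussian exponent together with the cross term $q^{-2ij}$. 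The key step is to linearize the Gaussian $q^{i^2}$: writing it, via the Jacobi triple product \eqref{Jacobi}, as a coefficient in an auxiliary variable $z$, the $i$-summation becomes geometric and can be carried out by \eqref{euler}. The cross term $q^{-2ij}$ is thereby converted into a $j$-dependent shift of a $q$-Pochhammer symbol; applying the elementary relation $(aq^{-2j};q)_{2j}=a^{2j}q^{-j(2j+1)}(q/a;q)_{2j}$ together with the splitting $(x;q)_{2j}=(x;q^2)_j(xq;q^2)_j$ collapses the $j$-Gaussian as well, so that $S(b_1,b_2)$ is expressed as a single contour integral $\oint P(z)\,dz/(2\pi i z)$ with $P$ a ratio of infinite products of the shape required in Lemma \ref{lem-integral}.

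Next I would apply Lemma \ref{lem-integral} to this integral, after checking that the poles are simple and that either $C>A$ or the convergence condition \eqref{cond} holds. The decisive structural observation is that the integrand should be arranged to have exactly two $c$-type factors in the denominator, so that the evaluation \eqref{eq-integral} contributes the main term \emph{plus} one ``idem'' term. These two residue contributions are precisely what match the two product summands appearing on the right-hand sides of \eqref{12-1-a}, \eqref{12-2-a} and \eqref{12-3-a}. Each residue comes with a basic hypergeometric series, and I would reduce these to theta quotients exactly as in the proof of Lemma \ref{lem-RR-id}: first transform the relevant ${}_3\phi_2$ by \eqref{eq-3phi2}, then sum the resulting ${}_2\phi_1$ by the $q$-Kummer formula \eqref{eq-BD}, and finally rewrite the outcome through \eqref{Jacobi} in the modulus-$28$ and modulus-$14$ theta notation $J_{a,m}$, $J_m$. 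This should produce the two summands of each (a)-identity, with the relative factors such as $-2q$ arising naturally from the idem contribution.

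For the (b)-forms \eqref{12-1-b}, \eqref{12-2-b} and \eqref{12-3-b}, which involve products of modulus $56$ rather than $28$, I see two routes and would take whichever is cleaner case by case. The first is to repeat the reduction with the complementary choice (summing the other index first, or linearizing the $j$-Gaussian rather than the $i$-Gaussian); the different pole structure then produces the modulus-$56$ theta quotients directly. The second route is to prove the equality of the (a)- and (b)-expressions as a standalone identity among eta quotients and theta functions, using \eqref{Jacobi} together with standard dissections (splitting $J_{a,28}$-type thetas according to residues modulo $56$); since both sides are already known to equal $S(b_1,b_2)$ from the first method, establishing (a)$=$(b) is an independent confirmation that I would use as a check.

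The hard part will be the bookkeeping needed to cast $P(z)$ into exactly the ratio-of-products form demanded by Lemma \ref{lem-integral} while verifying its pole and convergence hypotheses, and then matching the two residue sums to the specific theta quotients with the correct moduli and the correct powers of $q$ in front. In particular, keeping track of the half-integer shifts introduced by the $(q^2;q^2)_j$ factor and of the $(x;q)_{2j}=(x;q^2)_j(xq;q^2)_j$ splitting is where sign and exponent errors are most likely, and where the three linear shifts $(b_1,b_2)\in\{(0,0),(-1,2),(1,0)\}$ must be handled separately before recombining into the uniform statement.
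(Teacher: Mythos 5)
Your skeleton --- an auxiliary contour variable introduced through \eqref{Jacobi}, an application of Lemma \ref{lem-integral} whose two $c$-type denominator factors produce a main term plus one idem term, and a reduction of the two residues to theta quotients --- is indeed the paper's strategy, but two of your concrete steps fail. The fatal one is the linearization. You linearize the pure Gaussian $q^{i^2}$ and claim the cross term $q^{-2ij}$ can then be absorbed by the flip $(aq^{-2j};q)_{2j}=a^{2j}q^{-j(2j+1)}(q/a;q)_{2j}$. Carry this out: after the $i$-summation by \eqref{euler} one is left with $1/(z^{-1}q^{b_1-2j};q)_\infty$, and pulling the $j$-dependence out of this infinite product costs exactly the factor $q^{j(2j+1)}$, so the remaining $j$-sum becomes
\begin{align*}
\sum_{j\ge 0}\frac{q^{4j^2+(b_2-2b_1+1)j}\,z^{2j}}{(q^2;q^2)_j\,(q^{1-b_1}z;q)_{2j}},
\end{align*}
whose Gaussian has \emph{doubled} from $2j^2$ to $4j^2$ rather than collapsed. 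This is not Euler-summable, the integrand is therefore not a ratio of infinite products, and Lemma \ref{lem-integral} cannot be applied. What makes the method work (and what the paper does) is completing the square first, $i^2-2ij+2j^2=(i-j)^2+j^2$, and linearizing $q^{(i-j)^2}$: inserting $\sum_k q^{k^2}z^{-k}$ and letting the contour enforce $k=i-j$ decouples the double sum into $\sum_i u^iz^i/(q;q)_i$ and $\sum_j q^{j^2}v^jz^{-j}/(q^2;q^2)_j$, both Euler-summable --- the latter by the second identity in \eqref{euler}, precisely because $q^{j^2}=q^{2\binom{j}{2}+j}$ is compatible with the base $q^2$. This yields the product integrand \eqref{12-F-int}, with $(uz;q)_\infty=(uz,uzq;q^2)_\infty$ supplying the two $c$-type factors you anticipated.

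The second gap is the residue evaluation. You propose to sum the residue series as in Lemma \ref{lem-RR-id}, i.e.\ by the transformation \eqref{eq-3phi2} followed by the $q$-Kummer (Bailey--Daum) sum \eqref{eq-BD}. That route works in Lemma \ref{lem-RR-id} only because those particular series happen to be ${}_2\phi_1$-summable, and Bailey--Daum in base $q^k$ can only produce theta products of modulus $k$ or $2k$. Here the residue series are, after simplification, exactly Slater's series \eqref{S80}--\eqref{S82} (with $q\mapsto q^2$) and \eqref{S117}--\eqref{S119}, living in base $q^2$ and $q^4$, while the required evaluations involve $J_{a,14}$ and $J_{a,28}$; no ${}_2\phi_1$ summation reaches modulus $28$ from base $q^2$. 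These are Rogers--Selberg-type identities requiring Bailey-pair machinery, and the paper accordingly just quotes Slater's list --- which is what you should do at that point. By contrast, your treatment of the (b)-forms (establishing (a)$=$(b) as identities between theta quotients, e.g.\ by the Frye--Garvan method) agrees with the paper and is fine.
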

\begin{proof}
Since the equivalence between the first and the second expressions in each identity can be automatically proved using the method in \cite{Frye-Garvan}, we only prove the first expression in each identity.

Let
\begin{align}
F(u,v)=F(u,v;q):=\sum_{i,j\geq 0} \frac{q^{(i-j)^2+j^2}u^iv^j}{(q;q)_i(q^2;q^2)_j}.
\end{align}
We have by \eqref{euler} and \eqref{Jacobi} that
\begin{align}
    &F(u,v)=\oint \sum_{i=0}^\infty \frac{u^iz^i}{(q;q)_i} \sum_{j=0}^\infty \frac{q^{j^2}v^jz^{-j}}{(q^2;q^2)_j} \sum_{k=-\infty}^\infty q^{k^2}z^{-k} \frac{dz}{2\pi iz} \nonumber \\
    &=\oint \frac{(-qv/z,-qz,-q/z,q^2;q^2)_\infty}{(uz;q)_\infty} \frac{dz}{2\pi iz}. \label{12-F-int}
\end{align}

(1) By \eqref{12-F-int} we have
\begin{align}
F(1,1)=\oint \frac{(-qz,-q/z,-q/z,q^2;q^2)_\infty}{(z,qz;q^2)_\infty} \frac{dz}{2\pi iz} =S_1(q)+S_2(q), \label{12-F-start}
\end{align}
where by Lemma \ref{lem-integral} we have
\begin{align}
    &S_1(q)=\frac{(-q,-q,-q;q^2)_\infty}{(q;q^2)_\infty} \sum_{n=0}^\infty \frac{(-q;q^2)_n q^{n^2+n}}{(q^2,-q,-q,q;q^2)_n} \nonumber \\
    &=\frac{(-q;q^2)_\infty^3}{(q;q^2)_\infty} \sum_{n=0}^\infty \frac{q^{n^2+n}}{(q^2;q^2)_n(q^2;q^4)_n} =\frac{J_2^6J_{28}^3}{J_1^4J_4^2J_{4,28}J_{6,28}J_{8,28}} \quad \text{(by \eqref{S81})}\label{12-S1}
\end{align}
and 
\begin{align}
    &S_2(q)=\frac{(-q^2,-q^2,-1;q^2)_\infty}{(q^{-1};q^2)_\infty} \sum_{n=0}^\infty \frac{(-q^2;q^2)_nq^{n^2+2n}}{(q^2,-q^2,-q^2,q^3;q^2)_n} \nonumber \\
    &=-2q\frac{(-q^2;q^2)_\infty^3}{(q;q^2)_\infty} \sum_{n=0}^\infty \frac{q^{n^2+2n}}{(q;q^2)_{n+1}(q^4;q^4)_n}=-2q\frac{J_4^2J_{4,28}J_{5,14}}{J_1^2J_2J_{28}}. \quad \text{(by \eqref{S119})} \label{12-S2}
\end{align}
Substituting \eqref{12-S1} and \eqref{12-S2} into \eqref{12-F-start}, we obtain \eqref{12-1-a}.

(2) By \eqref{12-F-int} we have
\begin{align}
   & F(q^{-1},q^{2})=\oint \frac{(-q^3/z,-qz,-q/z,q^2;q^2)_\infty}{(q^{-1}z;q)_\infty} \frac{dz}{2\pi iz} \nonumber \\
   &=\oint \frac{(-qz,-q/z,-q^3/z,q^2;q^2)_\infty}{(q^{-1}z,z;q^2)_\infty} \frac{dz}{2\pi iz}.
\end{align}
By Lemma \ref{lem-integral} we have
\begin{align}
    F(q^{-1},q^{2})=S_1(q)+S_2(q), \label{12-2-start}
\end{align}
where 
\begin{align}
    &S_1(q)=\frac{(-1,-q^2,-q^2;q^2)_\infty}{(q;q^2)_\infty} \sum_{n=0}^\infty \frac{(-1;q^2)_nq^{n^2+2n}}{(q^2,-1,-q^2,q;q^2)_n}   \label{12-2-S1} \\
    &=2\frac{(-q^2;q^2)_\infty^3}{(q;q^2)_\infty} \sum_{n=0}^\infty \frac{q^{n^2+2n}}{(q^2,-q^2,q;q^2)_n} =2\frac{J_4^2J_{1,14}J_{12,28}}{J_1^2J_2J_{28}} \quad \text{(by \eqref{S118}})  \nonumber
\end{align}
and
\begin{align}
    &S_2(q)=\frac{(-q,-q^3,-q;q^2)_\infty}{(q^{-1};q^2)_\infty} \sum_{n=0}^\infty \frac{(-q;q^2)_nq^{n^2+3n}}{(q^2,-q,-q^3,q^3;q^2)_n} \label{12-2-S2} \\
    &=-q\frac{(-q;q^2)_\infty^3}{(q;q^2)_\infty} \sum_{n=0}^\infty \frac{q^{n^2+3n}}{(q^2;q^2)_n(q^2;q^4)_{n+1}} =-q\frac{J_2^6J_{28}^3}{J_1^4J_4^2J_{4,28}J_{10,28}J_{12,28}}.  \quad \text{(by \eqref{S82})}  \nonumber
\end{align}
Substituting \eqref{12-2-S1} and \eqref{12-2-S2} into \eqref{12-2-start}, we obtain \eqref{12-2-a}.

(3) By \eqref{12-F-int} and Lemma \ref{lem-integral} we have
\begin{align}\label{12-3-split}
    F(q,1)=\oint \frac{(-qz,-q/z,-q/z,q^2;q^2)_\infty}{(qz,q^2z;q^2)_\infty} \frac{dz}{2\pi iz}=S_1(q)+S_2(q),
\end{align}
where
\begin{align}
    &S_1(q)=\frac{(-q^2,-q^2,-1;q^2)_\infty}{(q;q^2)_\infty} \sum_{n=0}^\infty \frac{(-q^2;q^2)_nq^{n^2}}{(q^2,-q^2,-q^2,q;q^2)_n}  \label{12-3-S1} \\
    &=2\frac{(-q^2;q^2)_\infty^3}{(q;q^2)_\infty} \sum_{n=0}^\infty \frac{q^{n^2}}{(q^2,-q^2,q;q^2)_n} =2\frac{J_4^3J_{14}J_{3,28}J_{11,28}}{J_1^2J_2J_{28}J_{4,28}J_{12,28}}  \quad \text{(by \eqref{S117})}  \nonumber 
\end{align}
and 
\begin{align}
    &S_2(q)=\frac{(-q^3,-q^3,-q^{-1};q^2)_\infty}{(q^{-1};q^2)_\infty} \sum_{n=0}^\infty \frac{(-q^3;q^2)_nq^{n^2+n}}{(q^2,-q^3,-q^3,q^3;q^2)_n} \label{12-3-S2} \\
    &=-\frac{(-q;q^2)_\infty^3}{(q;q^2)_\infty} \sum_{n=0}^\infty \frac{q^{n^2+n}}{(q^2;q^2)_n(q^2;q^4)_{n+1}} =-\frac{J_2^6J_{28}^3}{J_1^4J_4^2J_{2,28}J_{8,28}J_{12,28}}. \quad \text{(by \eqref{S80})} \nonumber  
\end{align}
Substituting \eqref{12-3-S1} and \eqref{12-3-S2} into \eqref{12-3-split} we obtain \eqref{12-3-a}.
\end{proof}
\begin{rem}
The second expression in each identity gives a 2-dissection formula for the first expression. We can also obtain a 2-dissection formula in an elementary way. From \cite[Eqs.\ (2.21) and (2.22)]{Wang2020} we find
\begin{align}
    \frac{1}{J_1^2}&=\frac{J_8^5}{J_2^5J_{16}^2}+2q\frac{J_4^2J_{16}^2}{J_2^5J_8}, \label{J1-square} \\
    \frac{1}{J_1^4}&=\frac{J_4^{14}}{J_2^{14}J_8^4}+4q\frac{J_4^2J_8^4}{J_2^{10}}. \label{J1-four}
\end{align}
Substituting these formulas into \eqref{12-1-a}, \eqref{12-2-a} and \eqref{12-3-a}, and  then extracting the terms with even and odd powers of $q$, we obtain their 2-dissection formulas.
\end{rem}

\subsection{Examples 5 and 6}

Example 5 corresponds to
\begin{align}\label{exam-3}
    A=\begin{pmatrix}
        4 & 2 \\ 6 &4
    \end{pmatrix}, \quad b \in \left\{\begin{pmatrix}
        0 \\ 0
    \end{pmatrix}\right\}, \quad d=(1,3).
\end{align}
The modularity follows from Capparelli's identity \eqref{eq-Capparelli}.

Example 6 (the dual of Example 5) corresponds to
\begin{align}
    A=\begin{pmatrix}
        1 & -1/2 \\
        -3/2 & 1
    \end{pmatrix} , \quad
    b \in \bigg\{ 
    \begin{pmatrix}
        0 \\ 0
    \end{pmatrix} \bigg\} , \quad
    d=(1,3) . 
\end{align}
To prove its modularity, we establish the following theorem.
\begin{theorem}\label{thm-exam-6}
We have
\begin{align}
    \sum_{i,j\ge 0}\frac{q^{i^2-3ij+3j^2}}{(q^2;q^2)_i(q^6;q^6)_j}&=\frac{J_2^2J_6J_{24}}{J_1J_3J_4J_{4,24}}+2q\frac{J_4J_{24}^3J_{4,24}}{J_2J_{2,24}J_{6,24}^2J_{10,24}} \label{eq-exam6-2} \\
    &=
    \frac{J_{24}^6}{J_{4,24}^3J_{6,24}^3}+3q\frac{J_{24}^6J_{4,24}}{J_{2,24}^2J_{6,24}^3J_{10,24}^2}. \label{eq-exam6-1}
\end{align}
\end{theorem}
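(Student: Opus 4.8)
The plan is to collapse the double sum to a single sum by carrying out the inner summation over $i$ in closed form, and then to split the remaining sum over $j$ according to the parity of $j$, so that the two resulting pieces match exactly the two Rogers--Ramanujan type identities recorded in Lemma \ref{lem-RR-id}. First I would fix $j$ and write $q^{i^2-3ij}=(q^2)^{\binom{i}{2}}(q^{1-3j})^{i}$, so that the second Euler identity in \eqref{euler} (with base $q^2$) gives $\sum_{i\ge 0}q^{i^2-3ij}/(q^2;q^2)_i=(-q^{1-3j};q^2)_\infty$. Hence the left-hand side of \eqref{eq-exam6-2} equals $\sum_{j\ge 0}q^{3j^2}(-q^{1-3j};q^2)_\infty/(q^6;q^6)_j$.

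The decisive manipulation is to renormalize $(-q^{1-3j};q^2)_\infty$ by reflecting the factors of non-positive exponent through $1+q^{-m}=q^{-m}(1+q^{m})$. For even $j=2n$ all these exponents are negative odd integers, and pulling out $(-q;q^2)_\infty$ gives $(-q^{1-3j};q^2)_\infty=q^{-9j^2/4}(-q;q^2)_{3j/2}(-q;q^2)_\infty$; since $q^{3j^2}q^{-9j^2/4}=q^{3n^2}$, the even part becomes $(-q;q^2)_\infty\sum_{n\ge0}q^{3n^2}(-q;q^2)_{3n}/(q^6;q^6)_{2n}$, which is evaluated by \eqref{lem-W-1}. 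For odd $j=2n+1$ the exponent $0$ occurs, contributing an isolated factor $1+q^{0}=2$, while the remaining negative exponents are even; pulling out $(-q^2;q^2)_\infty$ yields $(-q^{1-3j};q^2)_\infty=2\,q^{-(9j^2-1)/4}(-q^2;q^2)_{(3j-1)/2}(-q^2;q^2)_\infty$. As $q^{3j^2}q^{-(9j^2-1)/4}=q^{3n^2+3n+1}$ and $(3j-1)/2=3n+1$, the odd part becomes $2q(-q^2;q^2)_\infty\sum_{n\ge0}q^{3n(n+1)}(-q^2;q^2)_{3n+1}/(q^6;q^6)_{2n+1}$, which is precisely \eqref{lem-W-2} with $q$ replaced by $q^2$.

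It remains to convert the prefactors into $J$-notation via $(-q;q^2)_\infty=J_2^2/(J_1J_4)$ and $(-q^2;q^2)_\infty=J_4/J_2$, and to tidy the theta quotients. The only nontrivial simplification is $J_{24}^2/(J_{3,24}J_{9,24})=J_6/J_3$, which follows from $J_{3,24}J_{9,24}=J_{24}^2(q^3;q^6)_\infty$ together with $(q^3;q^6)_\infty=J_3/J_6$; the even part then collapses to $J_2^2J_6J_{24}/(J_1J_3J_4J_{4,24})$ and the odd part directly to $2qJ_4J_{24}^3J_{4,24}/(J_2J_{2,24}J_{6,24}^2J_{10,24})$, which is \eqref{eq-exam6-2}. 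I expect the bookkeeping in the reflection step --- correctly tracking the accumulated power of $q$, and above all the isolated factor $2$ produced by the exponent-$0$ term in the odd case --- to be the main technical obstacle; everything else is routine $q$-series algebra.

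For the second product form \eqref{eq-exam6-1}, one could either verify the equality of the two right-hand sides of \eqref{eq-exam6-2} and \eqref{eq-exam6-1} directly by the theta-function method of \cite{Frye-Garvan} (as is used for the analogous two-form identities in Example~4), or give an independent derivation by instead summing over $j$ first: the same Euler argument, now in base $q^6$, gives $\sum_{i\ge0}q^{i^2}(-q^{3-3i};q^6)_\infty/(q^2;q^2)_i$, and splitting this over the parity of $i$ and evaluating each piece by a contour integral as in Lemma \ref{lem-integral} should produce the all-modulus-$24$ representation with coefficient $3q$.
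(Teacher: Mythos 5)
Your derivation of \eqref{eq-exam6-2} is correct and is essentially the paper's own (second) proof of this identity: sum over $i$ first with the Euler identity \eqref{euler} in base $q^2$, split over the parity of $j$, reflect the factors of non-positive exponent (correctly picking up the isolated factor $2$ from the exponent-zero term in the odd case), and invoke \eqref{lem-W-1} and \eqref{lem-W-2}. Your bookkeeping of the accumulated powers of $q$ checks out, and the simplification $J_{24}^2/(J_{3,24}J_{9,24})=J_6/J_3$ is exactly the step the paper leaves implicit when it passes from the even-$j$ piece $\frac{J_2^2J_{24}^3}{J_1J_4J_{3,24}J_{4,24}J_{9,24}}$ to the first term of \eqref{eq-exam6-2}.

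For \eqref{eq-exam6-1}, however, only your first option is sound. Verifying the equality of the two product sides by the method of \cite{Frye-Garvan} is legitimate (it is what the paper does for the two-form identities of Example 4); the paper itself instead substitutes the Xia--Yao $2$-dissection \eqref{eq-Xia-Yao} of $1/(J_1J_3)$ into \eqref{eq-exam6-2}. Your proposed alternative --- summing over $j$ first, splitting over the parity of $i$, and expecting the coefficient-$3q$ all-modulus-$24$ form to drop out --- would not work as described. That route is in fact the paper's first proof, with the two pieces evaluated by Ramanujan's identity \eqref{Rama538} and by \eqref{MSZ124} (not by the contour-integral Lemma \ref{lem-integral}, which is not the right tool here), and it yields an even-$i$ piece equal to $\frac{J_2J_6^2J_{2,24}J_{10,24}}{J_1J_3J_{12}J_{24}J_{4,24}}$. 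This piece contains $1/(J_1J_3)$ and therefore has both even and odd powers of $q$: the parity split in the summation index is not the same as the $2$-dissection by parity of exponents. After simplification this route only reproduces \eqref{eq-exam6-2} again; the coefficient $3$ in \eqref{eq-exam6-1} arises as $1+2$ only after the odd part of the first term of \eqref{eq-exam6-2} (extracted via \eqref{eq-Xia-Yao}) is combined with the $2q$ term. So some version of that extra dissection step, or the automatic theta-function verification, is unavoidable for the second equality.
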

\begin{proof}
We have by \eqref{euler} that
\begin{align}
&\sum_{i,j\geq 0} \frac{q^{i^2-3ij+3j^2}}{(q^2;q^2)_i(q^6;q^6)_j}=\sum_{i=0}^\infty \frac{q^{i^2}}{(q^2;q^2)_i}\sum_{j=0}^\infty \frac{q^{3(j^2-j)}\cdot q^{(3-3i)j}}{(q^6;q^6)_j} \nonumber \\
&=\sum_{i=0}^\infty \frac{q^{i^2}}{(q^2;q^2)_i}(-q^{3-3i};q^6)_\infty=S_0(q)+S_1(q), \label{36-S}
\end{align}
where $S_0(q)$ and $S_1(q)$ correspond to the sums with even and odd values of $i$, respectively.
We have
\begin{align}
&S_0(q)=\sum_{i=0}^\infty \frac{q^{4i^2}}{(q^2;q^2)_{2i}}(-q^{3-6i};q^6)_\infty  \nonumber \\
&=(-q^3;q^6)_\infty \sum_{i=0}^\infty \frac{q^{i^2}(-q^3;q^6)_i}{(q^2;q^2)_{2i}} =\frac{J_2J_6^2J_{2,24}J_{10,24}}{J_1J_3J_{12}J_{24}J_{4,24}}. \quad \text{(by \eqref{Rama538})} \label{36-S1}
\end{align}
Similarly,
\begin{align}
&S_1(q)=\sum_{i=0}^\infty \frac{q^{4i^2+4i+1}}{(q^2;q^2)_{2i+1}}(-q^{-6i};q^6)_\infty  \nonumber \\
&=2q(-q^6;q^6)_\infty \sum_{i=0}^\infty \frac{q^{i^2+i}(-q^6;q^6)_i}{(q^2;q^2)_{2i+1}} =2q\frac{J_{12}J_{24}^5J_{4,24}}{J_6J_{2,24}^2J_{6,24}^2J_{10,24}^2}. \quad \text{(by \eqref{MSZ124})} \label{36-S2}
\end{align}
Substituting \eqref{36-S1} and \eqref{36-S2} into \eqref{36-S}, we obtain \eqref{eq-exam6-1}. 

Now recall the identity \cite[Eq.\ (3.12)]{Xia-Yao}
\begin{align}\label{eq-Xia-Yao}
\frac{1}{J_1J_3}=\frac{J_8^2J_{12}^5}{J_2^2J_4J_6^4J_{24}^2}+q\frac{J_4^5J_{24}^2}{J_2^4J_6^2J_8^2J_{12}}.
\end{align}
Substituting it into \eqref{eq-exam6-2}, we obtain the 2-dissection formula \eqref{eq-exam6-1}.
\end{proof}

We can give a different proof if we sum over $i$ first.
\begin{proof}[Second proof of Theorem \ref{thm-exam-6}]
We have by \eqref{euler} that
\begin{align}\label{exam6-2nd-proof}
    &\sum_{i,j\geq 0} \frac{q^{i^2-3ij+3j^2}}{(q^2;q^2)_i(q^6;q^6)_j}=\sum_{j=0}^\infty \frac{q^{3j^2}}{(q^6;q^6)_j}\sum_{i=0}^\infty \frac{q^{i^2-i}\cdot q^{(1-3j)i}}{(q^2;q^2)_i} \nonumber \\
    &=\sum_{i=0}^\infty \frac{q^{3j^2}}{(q^6;q^6)_j}(-q^{1-3j};q^2)_\infty =T_0(q)+T_1(q),
\end{align}
where $T_0(q)$ and $T_1(q)$ correspond to the sums with $j$ being even and odd, respectively. We have
\begin{align}
&T_0(q)=\sum_{j=0}^\infty \frac{q^{12j^2}}{(q^6;q^6)_{2j}}(-q^{1-6j};q^2)_\infty \label{eq-T0} \\
&=(-q;q^2)_\infty \sum_{j=0}^\infty \frac{q^{3j^2}(-q;q^2)_{3j}}{(q^6;q^6)_{2j}}=\frac{J_2^2J_{24}^3}{J_1J_4J_{3,24}J_{4,24}J_{9,24}}, \quad \text{(by \eqref{lem-W-1})}  \nonumber \\
&T_1(q)=\sum_{j=0}^\infty \frac{q^{3(2j+1)^2}}{(q^6;q^6)_{2j+1}}(-q^{-2-6j};q^2)_\infty \label{eq-T1} \\
&=2q(-q^2;q^2)_\infty \sum_{j=0}^\infty \frac{q^{3j^2+3j}(-q^2;q^2)_{3j+1}}{(q^6;q^6)_{2j+1}} =2q\frac{J_4J_{24}^3J_{4,24}}{J_2J_{2,24}J_{6,24}^2J_{10,24}}. \quad \text{(by \eqref{lem-W-2})} \nonumber
\end{align}
Substituting \eqref{eq-T0} and \eqref{eq-T1} into \eqref{exam6-2nd-proof}, we obtain the first expression in Theorem \ref{thm-exam-6} and hence the whole theorem.
\end{proof}

\subsection{Examples 7 and 8}

Example 7 corresponds to 
\begin{align}
    A=
    \begin{pmatrix}
        2 & 1\\
        3 & 2
    \end{pmatrix} , \quad 
    b\in \bigg\{ 
    \begin{pmatrix}
        0 \\ 0
    \end{pmatrix} ,
    \begin{pmatrix}
        1 \\ 3
    \end{pmatrix} ,
    \begin{pmatrix}
        2 \\ 3
    \end{pmatrix} 
    \bigg\} , \quad
    d=(1,3) .\label{exam-4}
\end{align}
As pointed out by Mizuno \cite{Mizuno}, the modularity follows from three conjectural identities of Kanade and Russell \cite{K2015} (in the form by Kur\c{s}ung\"{o}z \cite{Kursungoz-AC}):
\begin{align}
 &\sum_{i,j\ge 0}\frac{q^{i^2+3ij+3j^2}}{(q;q)_i(q^3;q^3)_j}
        =\frac{1}{(q,q^3,q^6,q^8;q^9)_\infty}, \label{KR-1} \\
&\sum_{i,j\ge 0}\frac{q^{i^2+3ij+3j^2+i+3j}}{(q;q)_i(q^3;q^3)_j}
        =\frac{1}{(q^2,q^3,q^6,q^7;q^9)_\infty}, \label{KR-2} \\
&\sum_{i,j\ge 0}\frac{q^{i^2+3ij+3j^2+2i+3j}}{(q;q)_i(q^3;q^3)_j}
        =\frac{1}{(q^3,q^4,q^5,q^6;q^9)_\infty}. \label{KR-3} 
\end{align}
Though we were not able to prove them, we find the following new representations for the product sides, which can be shown equivalent to the above expressions using the method in the work of Frye and Garvan \cite{Frye-Garvan}.
\begin{conj}\label{WW-conj-1}
We have
    \begin{align}
        &\sum_{i,j\ge 0}\frac{q^{i^2+3ij+3j^2}}{(q;q)_i(q^3;q^3)_j}
        =
        \frac{(q^3,q^6;q^9)_\infty^2}{(q^4,q^5;q^9)_\infty^2(q,q^2,q^7,q^8;q^9)_\infty}-q^2\frac{(q,q^8;q^9)_\infty^2}{(q^4,q^5;q^9)_\infty^3(q^3,q^6;q^9)_\infty},
        \\
        &\sum_{i,j\ge 0}\frac{q^{i^2+3ij+3j^2+i+3j}}{(q;q)_i(q^3;q^3)_j}
        =
        \frac{(q^3,q^6;q^9)_\infty^2}{(q^2,q^4,q^5,q^7;q^9)_\infty^2}
        -q^2\frac{(q,q^8;q^9)_\infty^3}{(q^4,q^5;q^9)_\infty^3(q^2,q^3,q^6,q^7;q^9)_\infty},
        \\
         &\sum_{i,j\ge 0}\frac{q^{i^2+3ij+3j^2+2i+3j}}{(q;q)_i(q^3;q^3)_j}
         =
         \frac{(q^2,q^7;q^9)_\infty^2}{(q^4,q^5;q^9)_\infty^2(q,q^3,q^6,q^8;q^9)_\infty}
         -q\frac{(q,q^2,q^7,q^8;q^9)_\infty}{(q^4,q^5;q^9)_\infty^3(q^3,q^6;q^9)_\infty}.
    \end{align}
\end{conj}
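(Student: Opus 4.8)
The plan is to prove the three identities of Conjecture~\ref{WW-conj-1} directly, in the shape double-sum $=$ two-term product, paralleling the reductions that settled Examples~1--6, rather than by first invoking the (open) Kanade--Russell identities \eqref{KR-1}--\eqref{KR-3}. The first step is to try to collapse one summation index. For the first identity, summing over $j$ leaves the inner series $\sum_{j\ge 0}q^{3j^2}(q^{3i})^{j}/(q^3;q^3)_j$, and summing over $i$ leaves $\sum_{i\ge 0}q^{i^2}(q^{3j})^{i}/(q;q)_i$; the linear shifts $i+3j$ and $2i+3j$ in the other two identities only translate the arguments. In each earlier case the inner sum telescoped to a single theta product via Euler's identities \eqref{euler}, because the base of the relevant $q$-Pochhammer was exactly twice the coefficient of the square in the exponent, so that $q^{an^2}=q^{2a\binom{n}{2}+an}$ became a triangular exponent of base $q^{2a}$; this is the mechanism behind the second proof of Theorem~\ref{thm-exam-6} and the proof of Theorem~\ref{thm-parity}.

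The main obstacle, which I expect to govern the entire problem, is that for Example~7 this base-doubling relation fails on both indices at once: the coefficient of $i^2$ is $1$ against base $q^{1}$, and the coefficient of $3j^2$ is $3$ against base $q^{3}$, each short of the needed factor of $2$. Hence neither inner sum is a theta product; each is a genuine Rogers--Ramanujan-type series, and the elementary telescoping that handled Examples~3--6 does not apply, so the double sum does not even reduce to a single sum. The contour-integral method of Lemma~\ref{lem-integral} meets the same wall: a Jacobi-triple-product variable can absorb one quadratic combination, but the two residual sums $\sum_i q^{i^2}(\cdot)^i/(q;q)_i$ and $\sum_j q^{3j^2}(\cdot)^j/(q^3;q^3)_j$ keep the bad base-to-coefficient ratios and still refuse to factor. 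A parity split of $i$ or of $j$ does not help either, since the surviving sum over the other index retains the same defect; thus the two-term form of the right-hand sides is not the trace of a naive $2$-dissection but reflects the finer $A_2$ (modulus~$9$) theta decomposition. This is precisely the structural feature that keeps the Kanade--Russell identities open.

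To overcome this I would pursue two lines. The first is a genuinely direct attack: insert a free parameter $u$ as in Theorems~\ref{thm-new-exam1} and~\ref{thm-new-exam2}, derive the $q$-difference equation in $u$ satisfied by each parametrized double sum, and search for a Bailey pair (or a Bailey-lattice iterate) whose chain reproduces the coupled form $i^2+3ij+3j^2$ on one side while telescoping to the conjectured difference of products on the other; a successful match would prove the identity outright and, as a by-product, establish \eqref{KR-1}--\eqref{KR-3}. The second line is a rigorous reduction that isolates exactly this difficulty: by the method of Frye and Garvan \cite{Frye-Garvan} one proves the pure product-versus-product identity equating each two-term right-hand side of Conjecture~\ref{WW-conj-1} with the corresponding single product in \eqref{KR-1}, \eqref{KR-2} or \eqref{KR-3} --- both sides being, up to a rational power of $q$, modular functions on a common congruence subgroup, so that the valence formula reduces the identity to matching finitely many $q$-coefficients. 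This equivalence shows that Conjecture~\ref{WW-conj-1} is precisely as strong as Kanade--Russell, pinpointing the coupled-form obstruction of the preceding paragraph as its sole remaining content --- which is exactly what the first, direct line is designed to remove.
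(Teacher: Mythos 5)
Be careful about what the target statement actually is: in the paper this is Conjecture~\ref{WW-conj-1}, not a theorem, and the paper offers no proof of it. The authors' entire justification is the remark immediately following the conjecture: the two-term right-hand sides ``can be shown equivalent to the above expressions using the method in the work of Frye and Garvan,'' i.e.\ equivalent to the single products in the open Kanade--Russell identities \eqref{KR-1}--\eqref{KR-3}, so that the conjecture stands or falls with Kanade--Russell. Your second line --- proving the product-versus-product identity by the Frye--Garvan modular-function method, with the valence formula reducing everything to finitely many $q$-coefficients --- is exactly this reduction, and it is indeed rigorous and routine for this particular conjecture (the paper even contrasts it with Conjecture~\ref{WW-conj-3}, where the same equivalence is \emph{not} straightforward). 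Your structural diagnosis is also sound: the Euler telescoping of \eqref{euler} that collapses one index in Examples~1--6 requires the Pochhammer base to be twice the coefficient of the square (as in $q^{i^2}$ against $(q^2;q^2)_i$ in Theorem~\ref{thm-exam-6}), and in Example~7 this fails in both indices ($q^{i^2}$ against $(q;q)_i$, $q^{3j^2}$ against $(q^3;q^3)_j$), which is why neither the elementary reductions nor the contour-integral machinery of Lemma~\ref{lem-integral} produces a single-sum or theta factorization.

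The gap, judged as a proof of the stated identities, is your first line: the Bailey-pair/Bailey-lattice attack is a research plan, not an executed argument --- no pair is exhibited, no chain is iterated, and no telescoping to the conjectured difference of products is carried out. Since the double-sum-equals-product content is precisely the open Kanade--Russell conjectures, this gap is not a repairable oversight but the actual open problem; your proposal (correctly) reduces Conjecture~\ref{WW-conj-1} to it and then stops, which is the same position the paper itself takes. So the honest summary is: your checkable content coincides with the paper's own justification for posing the statement as a conjecture, and the remainder of your proposal should be labeled as a strategy for future work rather than as part of a proof.
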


Example 8 (the dual of Example 7) corresponds to
\begin{align}
    A= \begin{pmatrix}
        2 & -1\\
        -3 & 2
    \end{pmatrix}, \quad
    b\in \bigg\{
    \begin{pmatrix}
        0 \\ 0
    \end{pmatrix},
    \begin{pmatrix}
        -1 \\ 3
    \end{pmatrix},
    \begin{pmatrix}
        1 \\ 0
    \end{pmatrix}
    \bigg\}, \quad
    d=(1,3).
\end{align}
We propose the following conjecture, which justify the modularity of this example.
\begin{conj}\label{WW-conj-2}
   We have
   \begin{align}
       &\sum_{i,j\ge 0}\frac{q^{i^2-3ij+3j^2}}{(q;q)_i(q^3;q^3)_j}
        =
        \frac{1}{(q,q^3,q^6,q^8;q^9)_\infty^2}+q\frac{1}{(q^3,q^6;q^9)_\infty^2(q^2,q^4,q^5,q^7;q^9)_\infty} ,
        \\
        &\sum_{i,j\ge 0}\frac{q^{i^2-3ij+3j^2-i+3j}}{(q;q)_i(q^3;q^3)_j}
        =
        \frac{1}{(q^2,q^3,q^6,q^7;q^9)_\infty^2}+\frac{1}{(q^3,q^6;q^9)_\infty^2(q,q^4,q^5,q^8;q^9)_\infty} ,
        \\
        &\sum_{i,j\ge 0}\frac{q^{i^2-3ij+3j^2+i}}{(q;q)_i(q^3;q^3)_j}
        =
        \frac{(q^2,q^7;q^9)_\infty}{(q,q^3,q^6,q^8;q^9)_\infty^2(q^4,q^5;q^9)}
        -2q\frac{1}{(q^3,q^4,q^5,q^6;q^9)_\infty^2} .
   \end{align}
\end{conj}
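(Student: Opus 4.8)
The plan is to prove the three sum--product identities of Conjecture \ref{WW-conj-2} directly; since each right-hand side is a combination of manifestly modular infinite products built from modulus-$9$ theta quotients, establishing the equalities would immediately yield the modularity of the associated dual Nahm sums. A first structural observation is that the products on the right are exactly those appearing on the product side of the Kanade--Russell identities \eqref{KR-1}--\eqref{KR-3} and of Conjecture \ref{WW-conj-1}: for instance the leading term $(q,q^3,q^6,q^8;q^9)_\infty^{-2}$ is the square of the product in \eqref{KR-1}. This strongly suggests that the dual sums should be provable \emph{in tandem} with Example 7 rather than in isolation.

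My first attempt would follow the successful strategy of Examples 1--6: fix one index and collapse the double sum to a single Rogers--Ramanujan type series. This fails here. Summing over $j$ first gives the inner sum $\sum_{j\ge 0} q^{3j^2-3ij}/(q^3;q^3)_j$; writing $3j^2 = 6\binom{j}{2}+3j$ forces a factor of base $q^6$ in the numerator against the base $q^3$ of $(q^3;q^3)_j$, so neither Euler identity in \eqref{euler} applies. Summing over $i$ first produces the analogous mismatch of $q^{i^2}$ against $(q;q)_i$. This base incompatibility, caused by the coefficient $3$ in the quadratic form $i^2-3ij+3j^2$, is precisely the obstruction that places these modulus-$9$ identities beyond the reach of the single-sum reductions used earlier, and it is the source of the difficulty already present in the Kanade--Russell conjectures themselves.

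Given this, I would pursue a functional-equation route. Introduce the refined generating function $G(x,y) = \sum_{i,j\ge 0} x^i y^j q^{i^2-3ij+3j^2}/\big((q;q)_i(q^3;q^3)_j\big)$ and, by peeling off the $i=0$ and $j=0$ rows, derive a system of $q$-difference equations relating $G(x,y)$ to $G(xq^{-3},y)$, $G(x,yq^{-3})$ and their shifts. Because of the cross term $-3ij$ these recursions do not close into a scalar three-term recurrence of Andrews--Gordon type; one is forced to work with a vector of auxiliary functions indexed by residues modulo $3$, the very structure that underlies the Kanade--Russell hierarchy, and then to check that the conjectured product combinations satisfy the same system with matching initial conditions. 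A cleaner alternative, which I would try first in practice, is to seek an explicit $q$-series transformation---a Bailey-lattice move or an application of the ${}_3\phi_2$ transformation \eqref{eq-3phi2}---carrying the sum side of Example 7 to the sum side of Example 8; if found, it would reduce Conjecture \ref{WW-conj-2} to the Kanade--Russell conjectures \eqref{KR-1}--\eqref{KR-3}, after which the matching of the two families of product sides is routine via the Frye--Garvan algorithm \cite{Frye-Garvan}. I expect the main obstacle to be the same along every route: the cross term $-3ij$ prevents any decoupling, so without an independent proof of the Kanade--Russell identities one can at best reduce these dual identities to them rather than settle them outright---which is exactly why the statement is recorded here as a conjecture.
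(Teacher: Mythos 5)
You should first note what the paper actually does with this statement: it is recorded as Conjecture \ref{WW-conj-2} and is \emph{not proved} anywhere in the paper. The authors state only that these three identities, if true, would justify the modularity of Example 8 (the dual of Example 7), and they add that the sums involved appear to admit no single-product representations. So there is no proof in the paper against which your argument can be matched; the honest benchmark is whether your proposal itself constitutes a proof.

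It does not, and you essentially concede this. Your obstruction analysis is correct and worth keeping: for the quadratic form $i^2-3ij+3j^2$ with denominators $(q;q)_i(q^3;q^3)_j$, summing over either variable produces $q^{2\binom{i}{2}}$ against base $q$ (or $q^{6\binom{j}{2}}$ against base $q^3$), so neither identity in \eqref{euler} applies — this is exactly the mismatch that distinguishes Example 8 from Examples 2, 6, 10, 12, 14, where the bases align and the paper's single-sum collapses succeed. But everything after that diagnosis is a program, not an argument: the $q$-difference system for $G(x,y)$ is never written down, let alone shown to characterize both sides; the hoped-for Bailey-lattice or ${}_3\phi_2$ transformation (via \eqref{eq-3phi2}) carrying the sums of Example 7 to those of Example 8 is never exhibited; and even if it were, you would only have reduced the statement to the Kanade--Russell conjectures \eqref{KR-1}--\eqref{KR-3}, which remain open, so the statement would still be unproven. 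Moreover, your claim that the subsequent product-side matching would be ``routine via Frye--Garvan'' is optimistic: the paper itself warns (in connection with Conjecture \ref{WW-conj-3}) that the Frye--Garvan method does not straightforwardly settle all such modulus-$9$ equivalences, and here the right-hand sides are sums of two distinct infinite products rather than single products, which is precisely the feature the authors flag as new. In short: your assessment of \emph{why} the statement is hard agrees with the paper's implicit reasoning for leaving it conjectural, but as a proof attempt it establishes nothing, and the statement remains exactly where the paper leaves it — open.
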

There seems to be no single product representations for the Nahm sums involved in the above identities.

It is worth to mention that Kanade and Russell \cite{K2015} also conjectured the following companion result (in the form by Kur\c{s}ung\"{o}z \cite{Kursungoz-AC}):
\begin{align}\label{KR-4}
     &\sum_{i,j\ge 0}\frac{q^{i^2+3ij+3j^2+i+2j}}{(q;q)_i(q^3;q^3)_j}
        =
        \frac{1}{(q^2,q^3,q^5,q^8;q^9)_\infty}.
\end{align}
Note that the right side is not modular. Li and Wang \cite[Conjecture 6.4]{LW} conjectured the following identity:
\begin{align}\label{eq-LW-conj}
\sum_{i,j\geq 0} \frac{q^{i^2-3ij+3j^2+j}}{(q;q)_i(q^3;q^3)_j}=\frac{(q^6;q^9)_\infty}{(q,q^2,q^2,q^4,q^5,q^5;q^6)_\infty}.
\end{align}
Interestingly, this happens to be the dual case of \eqref{KR-4}. 

Alert readers may wonder whether we can find alternative expressions for \eqref{KR-4} and \eqref{eq-LW-conj} similar to those in Conjectures \ref{WW-conj-1} and \ref{WW-conj-2}. The answer is yes. We propose the following conjecture, which provides equivalent product representations for \eqref{KR-4} and \eqref{eq-LW-conj}. 
\begin{conj}\label{WW-conj-3}
We have
\begin{align}
  &\sum_{i,j\ge 0}\frac{q^{i^2+3ij+3j^2+i+2j}}{(q;q)_i(q^3;q^3)_j}=
 \frac{(q^2;q^9)_\infty(q^7;q^9)_\infty^2}{(q,q^3,q^4;q^9)_\infty(q^5,q^8;q^9)_\infty^2}
        -q\frac{(q,q^7;q^9)_\infty}{(q^3;q^9)_\infty(q^4;q^9)_\infty^2(q^5;q^9)_\infty^3}, \label{KR-4-new} \\
&\sum_{i,j\geq 0} \frac{q^{i^2-3ij+3j^2+j}}{(q;q)_i(q^3;q^3)_j} 
=\frac{(q^6,q^7;q^9)_\infty}{(q^5,q^8;q^9)_\infty^3(q,q^4;q^9)^2}
-q\frac{(q^6;q^9)_\infty}{(q^2,q^8;q^9)_\infty(q^4;q^9)_\infty^3(q^5;q^9)_\infty^4}. \label{eq-LW-conj-new}
\end{align}
\end{conj}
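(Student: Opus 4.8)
The plan is to target each asserted identity in its stated form, namely \emph{Nahm sum $=$ product difference}, and to prove it by splitting it into two pieces joined by the already-recorded conjectures. The key observation is that the double sum on the left of \eqref{KR-4-new} is \emph{literally} the same Nahm sum as on the left of \eqref{KR-4}, and the left side of \eqref{eq-LW-conj-new} coincides term-by-term with the left side of \eqref{eq-LW-conj}. Writing $\Pi(q)$ for the single product on the right of \eqref{KR-4} and $\Delta(q)$ for the product difference on the right of \eqref{KR-4-new}, the stated identity \eqref{KR-4-new} is therefore \emph{equivalent} to the conjunction of two assertions: the Nahm-sum identity \eqref{KR-4} (Nahm sum $=\Pi(q)$) and the purely product-theoretic equality $\Pi(q)=\Delta(q)$. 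The identical scheme applies to \eqref{eq-LW-conj-new} via \eqref{eq-LW-conj}. Thus the first step is to prove the two product equalities, and the second is to supply the two Nahm-sum identities.

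First I would establish the product equalities $\Pi(q)=\Delta(q)$ (and its analogue for \eqref{eq-LW-conj}). After multiplying out so that no denominators remain, each asserted coincidence becomes an identity among the one-sided products $(q^a;q^9)_\infty$, $1\le a\le 8$, each of which is (up to an explicit power of $q$) a generalized Dedekind eta function of level $9$ and hence, by the Jacobi triple product \eqref{Jacobi}, a piece of a complete theta function. Both sides of the cleared relation are then generalized eta-quotients, which the framework of Frye and Garvan \cite{Frye-Garvan} realizes as modular functions on a congruence subgroup of level $9$ (for instance $\Gamma_0(9)$ with a suitable character). Following that method I would compute the cusp orders of the difference of the two sides, extract a Sturm-type bound on the number of $q$-coefficients that must vanish, and then verify those finitely many coefficients by direct computation. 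This is exactly the mechanism the paper already invokes for Conjectures \ref{WW-conj-1} and \ref{WW-conj-2}; it is algorithmic, and I anticipate only routine bookkeeping here.

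The genuinely hard input is the second step: the original identities \eqref{KR-4} and \eqref{eq-LW-conj} themselves, which remain open. A self-contained attempt to bypass them fails for a structural reason. Summing over one of the two indices by Euler's identities \eqref{euler}, or evaluating the associated contour integral by Lemma \ref{lem-integral} as in Example 4, reduces the double sum only to a single Rogers--Ramanujan type series whose quadratic exponent ($i^2$ in the sum over $i$, or $3j^2$ in the sum over $j$) carries no compensating $q^{\binom{n}{2}}$ factor, so neither product evaluation in \eqref{euler} applies and the resulting single sum is not an entry of Slater's list. This is precisely the obstruction that keeps \eqref{KR-4} (a companion of the Kanade--Russell $A_2$ conjectures) and the Li--Wang conjecture \eqref{eq-LW-conj} unproven. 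Consequently the main obstacle is not the product-to-product step but the modularity of these two specific Nahm sums: the plan proves Conjecture \ref{WW-conj-3} \emph{conditionally} on \eqref{KR-4} and \eqref{eq-LW-conj}, and an unconditional proof appears to demand genuinely new ideas---an explicit Bailey pair, or a combinatorial realization of these $A_2$-type sums as the stated products---that lie beyond the reduction outlined above.
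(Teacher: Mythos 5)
Your framing point is right as far as it goes, but note first what you are matched against: the statement is Conjecture~\ref{WW-conj-3}, which the paper presents \emph{without proof}; there is no paper argument to replicate, and the authors leave open both the reduction inputs you invoke. Your step (ii) --- that \eqref{KR-4-new} and \eqref{eq-LW-conj-new} would follow from the open identities \eqref{KR-4} and \eqref{eq-LW-conj} once the corresponding product-to-product equalities are known, and that those two Nahm-sum identities are the genuinely hard core --- agrees with the paper's own assessment, including your diagnosis of why the Euler/contour-integral reductions of Section~\ref{sec-pre} and Example~4 fail to produce a Slater-type single sum here.

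The genuine gap is in your step (i), which you dismiss as ``routine bookkeeping'' via the method of \cite{Frye-Garvan}. The paper explicitly warns the opposite: ``In contrast to Conjecture~\ref{WW-conj-1}, it is not straightforward to use the method in \cite{Frye-Garvan} to show the equivalence of this conjecture and the formulas in \eqref{KR-4} and \eqref{eq-LW-conj}.'' The obstruction is structural, not clerical, and it invalidates the specific mechanism you describe. Your claim that each factor $(q^a;q^9)_\infty$ is ``up to an explicit power of $q$ a generalized Dedekind eta function of level $9$'' is false: a generalized Dedekind eta function requires the \emph{balanced pair} $(q^a,q^{9-a};q^9)_\infty$, and a single one-sided product $(q^a;q^9)_\infty$ is neither a theta function (the Jacobi triple product \eqref{Jacobi} needs the full triple $(q^a,q^{9-a},q^9;q^9)_\infty$) nor modular on any congruence subgroup. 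The quotients in \eqref{KR-4-new} and \eqref{eq-LW-conj-new} are conspicuously unbalanced --- $(q^7;q^9)_\infty$ enters squared against a single $(q^2;q^9)_\infty$, and powers such as $(q^5;q^9)_\infty^3$ and $(q^5;q^9)_\infty^4$ have no matching $(q^4;q^9)_\infty$ powers --- and the target product of \eqref{KR-4} is itself stated in the paper to be non-modular (``Note that the right side is not modular''). Consequently the cleared relation is an identity between non-modular $q$-series, no cusp-order computation or Sturm-type bound on $\Gamma_0(9)$ (or any congruence subgroup) applies, and checking finitely many coefficients proves nothing. So even conditionally on \eqref{KR-4} and \eqref{eq-LW-conj} your argument does not close, and unconditionally the statement remains exactly as open as the paper leaves it.
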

In contrast to Conjecture \ref{WW-conj-1}, it is not straightforward to use the method in \cite{Frye-Garvan} to show the equivalence of this conjecture and the formulas in \eqref{KR-4} and \eqref{eq-LW-conj}.

\subsection{Examples 9 and 10}
Example 9 corresponds to
\begin{align}
    A=\begin{pmatrix}
        3 & 2\\
        4 & 4
    \end{pmatrix} , \quad
    b \in \bigg\{ 
    \begin{pmatrix}
        -1/2 \\ 0
    \end{pmatrix} ,
    \begin{pmatrix}
        1/2 \\ 2
    \end{pmatrix}
    \bigg\} , \quad
    d = (1,2) . \label{exam-5}
\end{align}
Mizuno proved its modularity by establishing two Rogers-Ramanujan type identities (see \cite[(47) and (48)]{Mizuno}).

Example 10 (the dual of Example 9) corresponds to
\begin{align}
    A= \begin{pmatrix}
    1 & -1/2 \\
    -1 & 3/4 
    \end{pmatrix} ,\quad
    b \in \bigg\{
    \begin{pmatrix}
        -1/2 \\ 1/2
    \end{pmatrix} , 
    \begin{pmatrix}
        -1/2 \\ 1
    \end{pmatrix} \bigg\} , \quad
    d=(1,2) .
\end{align}
To prove its modularity, we establish the following theorem.
\begin{theorem}
    We have
    \begin{align}
        \sum_{i,j\ge 0}\frac{q^{2i^2-4ij+3j^2-2i+2j}}{(q^4;q^4)_i(q^8;q^8)_j}
        &=\frac{(-1;q^4)_\infty(q^2,q^3,q^5;q^5)_\infty}{(q,q^3,q^4;q^4)_\infty},
        \label{A1(2,-4,3).1}
        \\
        \sum_{i,j\ge 0}\frac{q^{2i^2-4ij+3j^2-2i+4j}}{(q^4;q^4)_i(q^8;q^8)_j}
        &=
        \frac{(-1;q^4)_\infty(q,q^4,q^5;q^5)_\infty}{(q,q^3,q^4;q^4)_\infty}.
        \label{A1(2,-4,3).2}
    \end{align}
\end{theorem}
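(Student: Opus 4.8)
The plan is to carry out the inner summation over $i$ first, since for each fixed $j$ the $i$-dependent part of the quadratic form collapses under Euler's identity. The first step is to rewrite the exponent as $2i^2-4ij+3j^2-2i+2j=(2i^2-2i-4ij)+(3j^2+2j)$ and to note the identity $2i^2-2i=4\binom{i}{2}$, so that $q^{2i^2-2i-4ij}=(q^4)^{\binom{i}{2}}(q^{-4j})^{i}$. The inner sum then reads $\sum_{i\ge 0}(q^4)^{\binom{i}{2}}(q^{-4j})^{i}/(q^4;q^4)_i$, which by the second identity in \eqref{euler} with base $q^4$ equals $(-q^{-4j};q^4)_\infty$. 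This turns the first double sum into $\sum_{j\ge0}q^{3j^2+2j}(-q^{-4j};q^4)_\infty/(q^8;q^8)_j$, and the second into the same expression with $q^{3j^2+2j}$ replaced by $q^{3j^2+4j}$.

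The only delicate point is the treatment of the factor $(-q^{-4j};q^4)_\infty$, which carries negative powers of $q$. I would split the product $\prod_{k\ge0}(1+q^{-4j+4k})$ at the index $k=j$: the factors with $k\ge j$ reassemble into the $j$-independent constant $(-1;q^4)_\infty=2(-q^4;q^4)_\infty$ (the term $k=j$ has exponent $0$ and supplies the factor $2$), while the $j$ factors with $0\le k\le j-1$, after pulling out the negative powers and reversing the order, give $q^{-(2j^2+2j)}(-q^4;q^4)_j$. Thus I expect
\[
(-q^{-4j};q^4)_\infty=q^{-(2j^2+2j)}(-q^4;q^4)_j\,(-1;q^4)_\infty .
\]
Substituting this, the factor $q^{-(2j^2+2j)}$ cancels most of the $j$-quadratic, leaving $q^{j^2}$ in the first summand and $q^{j^2+2j}$ in the second.

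The remaining steps are routine. Using $(q^4;q^4)_j(-q^4;q^4)_j=(q^8;q^8)_j$, the quotient $(-q^4;q^4)_j/(q^8;q^8)_j$ simplifies to $1/(q^4;q^4)_j$, so that the two double sums reduce to $(-1;q^4)_\infty\sum_{j\ge0}q^{j^2}/(q^4;q^4)_j$ and $(-1;q^4)_\infty\sum_{j\ge0}q^{j^2+2j}/(q^4;q^4)_j$. Applying the Rogers--Ramanujan type identities (S.~20) in \eqref{S20} and (S.~16) in \eqref{S16}, these single sums equal $J_{2,5}/J_{1,4}$ and $J_{1,5}/J_{1,4}$; unwinding the notation $J_{2,5}=(q^2,q^3,q^5;q^5)_\infty$, $J_{1,5}=(q,q^4,q^5;q^5)_\infty$ and $J_{1,4}=(q,q^3,q^4;q^4)_\infty$ yields precisely the right-hand sides of \eqref{A1(2,-4,3).1} and \eqref{A1(2,-4,3).2}. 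I do not anticipate any genuine analytic obstacle; the one place to be careful is the sign reversal in the second step, where one must correctly account both for the power of $q$ extracted from the negative-exponent factors and for the zero-exponent term responsible for the factor $2$ inside $(-1;q^4)_\infty$.
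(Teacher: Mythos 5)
Your proof is correct and follows essentially the same route as the paper: both sum over $i$ first via Euler's identity \eqref{euler} to produce $(-q^{-4j};q^4)_\infty$, split off the finite factor $(-q^{-4j};q^4)_j=q^{-2j^2-2j}(-q^4;q^4)_j$ against the $j$-independent tail $(-1;q^4)_\infty$, cancel via $(q^8;q^8)_j=(q^4;q^4)_j(-q^4;q^4)_j$, and finish with Slater's identities \eqref{S20} and \eqref{S16}. The only cosmetic difference is that the paper carries generic parameters $(u,v)$ before specializing to $(q^{-2},q^2)$ and $(q^{-2},q^4)$, whereas you substitute the specific values from the start.
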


\begin{proof}
  By \eqref{euler} we have
    \begin{align}
       & F(u,v)=F(u,v;q)
        :=\sum_{i,j\ge 0}\frac{q^{2i^2-4ij+3j^2}u^iv^j}{(q^4;q^4)_i(q^8;q^8)_j}
        =
        \sum_{j\ge 0}\frac{q^{3j^2}v^j}{(q^8;q^8)_j}
        \sum_{i\ge 0}\frac{q^{2i^2-2i}(q^{-4j+2}u)^i}{(q^4;q^4)_i} \nonumber
        \\
        &=\sum_{j\ge 0}\frac{q^{3j^2}v^j(-q^{-4j+2}u;q^4)_\infty}{(q^8;q^8)_j}
        =
        (-q^{2}u;q^4)_\infty\sum_{j\ge 0}\frac{q^{3j^2}v^j(-q^{-4j+2}u;q^4)_j}{(q^8;q^8)_j}\label{F1}.
    \end{align}
    
    Setting $(u,v)=(q^{-2},q^2)$, we have by $\eqref{F1}$ and $\eqref{S20}$ that
    \begin{align*}
       & F(q^{-2},q^2)
        =(-1;q^4)_\infty\sum_{j\ge 0}\frac{q^{3j^2+2j}(-q^{-4j};q^4)_j}{(q^8;q^8)_j}
        =
        (-1;q^4)_\infty\sum_{j\ge 0}\frac{q^{j^2}(-q^4;q^4)_j}{(q^8;q^8)_j}\notag
        \\
        &=(-1;q^4)_\infty
        \sum_{j\ge 0}\frac{q^{j^2}}{(q^4;q^4)_j}
        =
        \frac{(-1;q^4)_\infty(q^2,q^3,q^5;q^5)_\infty}{(q,q^3,q^4;q^4)_\infty}.
    \end{align*}
    This proves $\eqref{A1(2,-4,3).1}$. 

     Setting $(u,v)=(q^{-2},q^4)$, we have by $\eqref{F1}$ and $\eqref{S16}$ that
    \begin{align*}
       & F(q^{-2},q^4)
        =(-1;q^4)_\infty\sum_{j\ge 0}\frac{q^{3j^2+4j}(-q^{-4j};q^4)_j}{(q^8;q^8)_j}
        =
        (-1;q^4)_\infty\sum_{j\ge 0}\frac{q^{j^2+2j}(-q^4;q^4)_j}{(q^8;q^8)_j}\notag
        \\
        &=(-1;q^4)_\infty
        \sum_{j\ge 0}\frac{q^{j^2+2j}}{(q^4;q^4)_j}
        =
        \frac{(-1;q^4)_\infty(q,q^4,q^5;q^5)_\infty}{(q,q^3,q^4;q^4)_\infty}.
    \end{align*}
    This proves $\eqref{A1(2,-4,3).2}$. 
\end{proof}

\subsection{Examples 11 and 12}
Example 11 corresponds to 
\begin{align}
    A=
    \begin{pmatrix}
        3/2 & 1/2\\
        1 &  1
    \end{pmatrix},\quad
    b\in \bigg\{ 
    \begin{pmatrix}
        -1\\1
    \end{pmatrix} , 
    \begin{pmatrix}
        -1/2 \\ 0
    \end{pmatrix} ,
    \begin{pmatrix}
        0 \\ 1
    \end{pmatrix}
    \bigg\},\quad
    d=(1,2) .\label{exam-6}
\end{align}
Mizuno \cite[(50)--(53)]{Mizuno} established some identities to prove its modularity.

Example 12 (the dual of Example 11) corresponds to
\begin{align}
    A= \begin{pmatrix}
    1 & -1/2 \\
    -1 & 3/2
    \end{pmatrix} ,\quad
    b \in \bigg\{ 
    \begin{pmatrix}
        -3/2 \\ 5/2
    \end{pmatrix} ,
    \begin{pmatrix}
        -1/2 \\ 1/2
    \end{pmatrix} , 
    \begin{pmatrix}
        -1/2 \\3/2
    \end{pmatrix}
    \bigg\} , \quad
    d=(1,2) . 
\end{align}

To prove its modularity, we establish the following theorem.

\begin{theorem}
    We have 
    \begin{align}
        \sum_{i,j\ge 0}\frac{q^{\frac{1}{2}i^2-ij+\frac{3}{2}j^2-\frac{3}{2}i+\frac{5}{2}j}}{(q;q)_i(q^2;q^2)_j}
        &=
        \frac{2q^{-1}(-q;q)_\infty}{(q,q^4;q^5)_\infty}, 
        \label{A1(1/2,-1,3/2).1}
        \\
        \sum_{i,j\ge 0}\frac{q^{\frac{1}{2}i^2-ij+\frac{3}{2}j^2-\frac{1}{2}i+\frac{1}{2}j}}{(q;q)_i(q^2;q^2)_j}
        &=\frac{2(-q;q)_\infty}{(q,q^4;q^5)_\infty},
        \label{A1(1/2,-1,3/2).2}
        \\
        \sum_{i,j\ge 0}\frac{q^{\frac{1}{2}i^2-ij+\frac{3}{2}j^2-\frac{1}{2}i+\frac{3}{2}j}}{(q;q)_i(q^2;q^2)_j}
        &=\frac{2(-q;q)_\infty}{(q^2,q^3;q^5)_\infty}.
        \label{A1(1/2,-1,3/2).3}
    \end{align}
\end{theorem}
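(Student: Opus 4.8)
The plan is to handle all three identities uniformly by summing over the first index $i$ with Euler's identity, thereby reducing each double sum to a single sum in $j$ that is already evaluated in Section \ref{sec-pre}. I introduce the two-variable function
\[
F(u,v):=\sum_{i,j\ge 0}\frac{q^{\frac{1}{2}i^2-ij+\frac{3}{2}j^2}u^iv^j}{(q;q)_i(q^2;q^2)_j},
\]
so that the left sides of \eqref{A1(1/2,-1,3/2).1}, \eqref{A1(1/2,-1,3/2).2} and \eqref{A1(1/2,-1,3/2).3} are $F(q^{-3/2},q^{5/2})$, $F(q^{-1/2},q^{1/2})$ and $F(q^{-1/2},q^{3/2})$, respectively. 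First I would write $\tfrac{1}{2}i^2=\binom{i}{2}+\tfrac{1}{2}i$ and apply the second identity in \eqref{euler} to the inner $i$-sum, obtaining
\[
\sum_{i\ge 0}\frac{q^{\binom{i}{2}}\bigl(q^{1/2-j}u\bigr)^i}{(q;q)_i}=\bigl(-q^{1/2-j}u;q\bigr)_\infty ,
\qquad
F(u,v)=\sum_{j\ge 0}\frac{q^{\frac{3}{2}j^2}v^j}{(q^2;q^2)_j}\bigl(-q^{1/2-j}u;q\bigr)_\infty .
\]

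Next I would peel off the $j$-independent tail by writing $\bigl(-q^{1/2-j}u;q\bigr)_\infty=\bigl(-q^{1/2-j}u;q\bigr)_j\,\bigl(-q^{1/2}u;q\bigr)_\infty$ and reversing the finite product, namely $\bigl(-q^{1/2-j}u;q\bigr)_j=u^jq^{-j^2/2}\bigl(-q^{1/2}u^{-1};q\bigr)_j$. Specializing $(u,v)$, the prefactor $\bigl(-q^{1/2}u;q\bigr)_\infty$ becomes $(-q^{-1};q)_\infty$ in the first case and $(-1;q)_\infty$ in the other two, while the total exponent of $q$ inside the remaining sum collapses to $j^2+j$, $j^2$ and $j^2+j$, respectively. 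After using $(q^2;q^2)_j=(q;q)_j(-q;q)_j$, the three single sums become
\[
\frac{1}{1+q}\sum_{j\ge 0}\frac{q^{j^2+j}(-q;q)_{j+1}}{(q^2;q^2)_j},
\qquad
\sum_{j\ge 0}\frac{q^{j^2}}{(q;q)_j},
\qquad
\sum_{j\ge 0}\frac{q^{j^2+j}}{(q;q)_j},
\]
which are evaluated by \eqref{RR-variant} for the first and by the two Rogers--Ramanujan identities \eqref{RR} for the second and third.

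The factor $2$ appearing on every right side is then explained cleanly by the elementary evaluations $(-1;q)_\infty=2(-q;q)_\infty$ and $(-q^{-1};q)_\infty=2q^{-1}(1+q)(-q;q)_\infty$; in the first identity the extraneous $1+q$ cancels exactly against the $\tfrac{1}{1+q}$ produced by \eqref{RR-variant}, and $\tfrac{J_5}{J_{1,5}}=\tfrac{1}{(q,q^4;q^5)_\infty}$ supplies the stated product. I expect the only delicate step to be the bookkeeping in reversing $\bigl(-q^{1/2-j}u;q\bigr)_j$ and verifying that the quadratic-plus-linear exponents genuinely reduce to $j^2$ or $j^2+j$: this is a routine but error-prone calculation with half-integer powers of $q$, where the powers of $q$ and the boundary factors must be tracked carefully. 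Once the reduction to single sums is in place, no real obstacle remains, since all three target sums already occur among the identities collected in Section \ref{sec-pre}.
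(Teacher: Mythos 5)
Your proposal is correct and follows essentially the same route as the paper's proof: the same two-variable function $F(u,v)$, the same application of Euler's identity \eqref{euler} to the inner $i$-sum followed by peeling off $(-q^{1/2}u;q)_\infty$ and reversing the finite product, and the same reduction of the three specializations to \eqref{RR-variant} and the two Rogers--Ramanujan identities \eqref{RR}, with the factor $2$ arising from $(-1;q)_\infty=2(-q;q)_\infty$ exactly as in the paper. The only cosmetic difference is that you state the product reversal $(-q^{1/2-j}u;q)_j=u^jq^{-j^2/2}(-q^{1/2}u^{-1};q)_j$ in general form, whereas the paper carries it out case by case; the bookkeeping you flag as delicate does check out in all three cases.
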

    \begin{proof}
By \eqref{euler} we have
        \begin{align}
           &F(u,v)= F(u,v;q)
            :=\sum_{i,j\ge 0}\frac{q^{\frac{1}{2}i^2-ij+\frac{3}{2}j^2}u^iv^j}{(q;q)_i(q^2;q^2)_j}
            =
            \sum_{j\ge 0}\frac{q^{\frac{3}{2}j^2}v^j}{(q^2;q^2)_j}
            \sum_{i\ge 0}\frac{q^{\frac{1}{2}(i^2-i)}(q^{-j+\frac{1}{2}}u)^i}{(q;q)_i}\notag
            \\
            &=
            \sum_{j\ge 0}\frac{q^{\frac{3}{2}j^2}v^j(-q^{-j+\frac{1}{2}}u;q)_\infty}{(q^2;q^2)_j}
            =
            (-q^{\frac{1}{2}}u;q)_\infty\sum_{j\ge 0}\frac{q^{\frac{3}{2}j^2}v^j(-q^{-j+\frac{1}{2}}u;q)_j}{(q^2;q^2)_j}.
            \label{F2}
        \end{align}

Setting $(u,v)=(q^{-\frac{3}{2}},q^{\frac{5}{2}})$, we have by $\eqref{F2}$ and $\eqref{RR-variant}$ that
\begin{align*}
&F(q^{-\frac{3}{2}},q^{\frac{5}{2}})
=(-q^{-1};q)_\infty
 \sum_{j\ge 0}\frac{q^{\frac{3}{2}j^2+\frac{5}{2}j}(-q^{-j-1};q)_j}{(q^2;q^2)_j}\notag
            \\
&=\frac{(-q^{-1};q)_\infty}{1+q}\sum_{j\ge 0}\frac{q^{j^2+j}(-q;q)_{j+1}}{(q^2;q^2)_j}
=\frac{2q^{-1}(-q;q)_\infty}{(q,q^4;q^5)_\infty}.
\end{align*}
This proves $\eqref{A1(1/2,-1,3/2).1}$. 

Setting $(u,v)=(q^{-\frac{1}{2}},q^{\frac{1}{2}})$, we have by $\eqref{F2}$ and $\eqref{RR}$ that
\begin{align*}
& F(q^{-\frac{1}{2}},q^{\frac{1}{2}})
 =(-1;q)_\infty
 \sum_{j\ge 0}\frac{q^{\frac{3}{2}j^2+\frac{1}{2}j}(-q^{-j};q)_j}{(q^2;q^2)_j}\notag
            \\
            &=(-1;q)_\infty\sum_{j\ge 0}\frac{q^{j^2}(-q;q)_j}{(q^2;q^2)_j}
            =2
            (-q;q)_\infty\sum_{j\ge 0}\frac{q^{j^2}}{(q;q)_j}
            =\frac{2(-q;q)_\infty}{(q,q^4;q^5)_\infty}. 
\end{align*}
This proves $\eqref{A1(1/2,-1,3/2).2}$.

Setting $(u,v)=(q^{-\frac{1}{2}},q^{\frac{3}{2}})$, we have by $\eqref{F2}$ and $\eqref{RR}$ that
\begin{align*}
     &       F(q^{-\frac{1}{2}},q^{\frac{3}{2}})
            =(-1;q)_\infty
            \sum_{j\ge 0}\frac{q^{\frac{3}{2}j^2+\frac{3}{2}j}(-q^{-j};q)_j}{(q^2;q^2)_j}\notag
            \\
            &=(-1;q)_\infty\sum_{j\ge 0}\frac{q^{j^2+j}(-q;q)_j}{(q^2;q^2)_j}
            =2
            (-q;q)_\infty\sum_{j\ge 0}\frac{q^{j^2+j}}{(q;q)_j}
            =\frac{2(-q;q)_\infty}{(q^2,q^3;q^5)_\infty}. 
\end{align*}
This proves $\eqref{A1(1/2,-1,3/2).3}$. 
\end{proof}

\subsection{Examples 13 and 14}
Example 13 corresponds to 
    \begin{align}
        A=\begin{pmatrix}
            3 & 1\\
            4 & 2
        \end{pmatrix} , \quad
        b \in \bigg\{
        \begin{pmatrix}
            -1/2 \\ 0
        \end{pmatrix} , 
        \begin{pmatrix}
            3/2 \\ 4
        \end{pmatrix}
        \bigg\} , \quad
        d=(1,4). \label{exam-7}
    \end{align}
As pointed out by Mizuno, the modularity follows from two identities in the work of Kur\c{s}ung\"oz \cite[(21) and (22)]{Kursungoz-JCTA}.

Example 14 (the dual of Example 13) corresponds to    
    \begin{align}
        A=\begin{pmatrix}
            1 & -1/2\\
            -2 & 3/2
        \end{pmatrix} ,  \quad
        b \in \bigg\{
        \begin{pmatrix}
            -1/2 \\ 1
        \end{pmatrix} ,
        \begin{pmatrix}
            -1/2 \\ 3
        \end{pmatrix}
        \bigg\}, \quad 
        d=(1,4).
    \end{align}
The modularity follows from the identities \cite[Eqs.\ (3.79) and (3.81)]{LW}:
        \begin{align}
            \sum_{i,j\ge 0}\frac{q^{\frac{1}{2}i^2-2ij+3j^2-\frac{1}{2}i+j}}{(q;q)_i(q^4;q^4)_j}
            &=\frac{2}{(q;q^2)_\infty(q,q^4,q^7;q^8)_\infty} ,\label{t1-14-1} 
            \\
            \sum_{i,j\ge 0}\frac{q^{\frac{1}{2}i^2-2ij+3j^2-\frac{1}{2}i+3j}}{(q;q)_i(q^4;q^4)_j}
            &=\frac{2}{(q;q^2)_\infty(q^3,q^4,q^5;q^8)_\infty} .\label{t1-14-2}
        \end{align}

We also find some new non-modular identities for the following choices of $(A,b,d)$ where $A$ and $d$ are the same with Example 13 :
\begin{align}
    A= \begin{pmatrix}
        3 & 1\\
        4 & 2
    \end{pmatrix}, \quad
    b \in \bigg\{ \begin{pmatrix}
        1/2 \\ 2
    \end{pmatrix}, 
    \begin{pmatrix}
        -1/2 \\ 2
    \end{pmatrix} ,
    \begin{pmatrix}
        -5/2 \\ 0
    \end{pmatrix}
    \bigg\} , \quad
    d=(1,4) . \label{new-exam-13}
\end{align}
For the first and second choices of $b$ the corresponding identities are \cite[Eq.\ (3.78)]{LW} and \cite[Eq.\ (28)]{Kursungoz-JCTA}:
\begin{align}
       &\sum_{i,j\ge 0}\frac{q^{\frac{3}{2}i^2+4ij+4j^2+\frac{1}{2}i+2j}}{(q;q)_i(q^4;q^4)_j}
        =
        \frac{1}{(q^2,q^3,q^7;q^8)_\infty}, \\
     &\sum_{i,j\ge 0}\frac{q^{\frac{3}{2}i^2+4ij+4j^2-\frac{1}{2}i+2j}}{(q;q)_i(q^4;q^4)_j}
        =
        \frac{1}{(q,q^5,q^6;q^8)_\infty}.
    \end{align}
For the third choice of $b$ we find the following new identity.
\begin{theorem}\label{thm-new-exam13}
    We have 
    \begin{align}
        &\sum_{i,j\ge 0}\frac{q^{\frac{3}{2}i^2+4ij+4j^2-\frac{5}{2}i}}{(q;q)_i(q^4;q^4)_j}
        =q^{-1}(1+q)\frac{1}{(q,q^4,q^7;q^8)_\infty}. \label{ex7-1-1}
    \end{align}
\end{theorem}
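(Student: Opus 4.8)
The plan is to imitate the coefficient-extraction argument used for Theorem~\ref{thm-new-exam1}, since a direct summation over either variable fails here: the coefficient $\tfrac32$ of $i^2$ does not equal half the base of $(q;q)_i$, and the coefficient $4$ of $j^2$ does not equal half the base of $(q^4;q^4)_j$, so neither Euler identity in \eqref{euler} applies to a single inner sum. The key structural observation is the decomposition of the quadratic form
\begin{align}
\tfrac{3}{2}i^2+4ij+4j^2-\tfrac{5}{2}i=(i+2j)^2+\Big(\tfrac{1}{2}i^2-\tfrac{5}{2}i\Big),
\end{align}
in which the residual term depends on $i$ alone and is Euler-summable against the base $q$. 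Writing $m=i+2j$ and factoring out $q^{m^2}$, the left side of \eqref{ex7-1-1} becomes $\sum_{m\ge0}q^{m^2}c_m$ with $c_m=\sum_{i+2j=m}q^{(i^2-5i)/2}/\big((q;q)_i(q^4;q^4)_j\big)$.

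First I would evaluate the inner coefficients $c_m$ through a generating function in an auxiliary variable $x$. Since $\tfrac12 i^2-\tfrac52 i=\binom{i}{2}-2i$, both parts of \eqref{euler} apply and give
\begin{align}
\sum_{i,j\ge 0}\frac{x^{i+2j}q^{(i^2-5i)/2}}{(q;q)_i(q^4;q^4)_j}=\frac{(-xq^{-2};q)_\infty}{(x^2;q^4)_\infty}=(1+xq^{-2})\frac{(-xq^{-1};q^2)_\infty}{(x;q^2)_\infty},
\end{align}
where the last equality uses $(x^2;q^4)_\infty=(x;q^2)_\infty(-x;q^2)_\infty$ together with the even/odd split $(-xq^{-2};q)_\infty=(1+xq^{-2})(-x;q^2)_\infty(-xq^{-1};q^2)_\infty$. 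The $q$-binomial theorem with base $q^2$ then expands $(-xq^{-1};q^2)_\infty/(x;q^2)_\infty=\sum_{n\ge0}(-q^{-1};q^2)_n x^n/(q^2;q^2)_n$, so that comparing coefficients of $x^m$ yields
\begin{align}
c_m=\frac{(-q^{-1};q^2)_m}{(q^2;q^2)_m}+q^{-2}\frac{(-q^{-1};q^2)_{m-1}}{(q^2;q^2)_{m-1}}.
\end{align}

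Next I would reassemble $\sum_{m\ge0}q^{m^2}c_m$. Shifting the index in the second piece rewrites it as $\sum_{m}q^{m^2}(-q^{-1};q^2)_m\,q^{2m-1}/(q^2;q^2)_m$, so that adding the two pieces produces the factor $1+q^{2m-1}$. The decisive simplification is the telescoping relation $(-q^{-1};q^2)_m(1+q^{2m-1})=(-q^{-1};q^2)_{m+1}=(1+q^{-1})(-q;q^2)_m$, which collapses the whole expression to
\begin{align}
\sum_{i,j\ge0}\frac{q^{\frac{3}{2}i^2+4ij+4j^2-\frac{5}{2}i}}{(q;q)_i(q^4;q^4)_j}=(1+q^{-1})\sum_{m\ge0}\frac{q^{m^2}(-q;q^2)_m}{(q^2;q^2)_m}.
\end{align}
Applying Slater's identity \eqref{S36} to the remaining single sum and rewriting $1+q^{-1}=q^{-1}(1+q)$ then gives \eqref{ex7-1-1}.

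I expect the main obstacle to be conceptual rather than computational: spotting the decomposition $(i+2j)^2+\tfrac12 i^2$ that routes the problem through the single combined index $m=i+2j$ (so the leftover sum is Euler-summable), and then recognizing the telescoping $(-q^{-1};q^2)_m(1+q^{2m-1})=(-q^{-1};q^2)_{m+1}$ that merges the two pieces of $c_m$ and extracts the scalar $1+q^{-1}$. Once these two observations are in place, everything reduces to \eqref{euler}, the $q$-binomial theorem, and the single-sum evaluation \eqref{S36}, all of which are routine. A minor point to verify is that the manipulations with negative powers of $q$ (in $q^{-1}$, $(-q^{-1};q^2)_m$, etc.) are harmless, as the final product is manifestly a Laurent expansion with the stated $q^{-1}(1+q)$ prefactor.
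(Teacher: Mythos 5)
Your proposal is correct and follows essentially the same route as the paper's own proof: the same auxiliary-variable generating function $\sum_{i,j}x^{i+2j}q^{(i^2-5i)/2}/\big((q;q)_i(q^4;q^4)_j\big)$ evaluated via \eqref{euler}, the same coefficient extraction in $x^m$, the same merging of the two pieces through $(-q^{-1};q^2)_m(1+q^{2m-1})=(1+q^{-1})(-q;q^2)_m$, and the same final appeal to \eqref{S36}. The only cosmetic difference is that the paper carries a parameter $u$ (setting $u=1$ at the end) while you work directly with $u=1$, which changes nothing of substance.
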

\begin{proof}
We have
    \begin{align}
        &\sum_{i,j\ge 0}\frac{q^{\frac{1}{2}i^2-\frac{5}{2}i}x^{i+2j}}{(q;q)_i(q^4;q^4)_j}
        =
        \frac{(-xq^{-2};q)_\infty}{(x^2;q^4)_\infty}
        =\frac{(-xq^{-2};q^2)_\infty(-xq^{-1};q^2)_\infty}{(x;q^2)_\infty(-x;q^2)_\infty}\notag
        \\
        &=(1+xq^{-2})\frac{(-xq^{-1};q^2)_\infty}{(x;q^2)_\infty}
        =(1+xq^{-2})\sum_{m\ge 0}\frac{(-q^{-1};q^2)_mx^m}{(q^2;q^2)_m} \notag
        \\
        &=1+\sum_{m\ge 1}\left(\frac{(-q^{-1};q^2)_m}{(q^2;q^2)_m}+q^{-2}\frac{(-q^{-1};q^2)_{m-1}}{(q^2;q^2)_{m-1}}\right)x^m .
    \end{align}
    Comparing the coefficients of $x^m$ on both sides, we deduce that 
    \begin{align}
        &\sum_{i+2j=m}\frac{q^{\frac{1}{2}i^2-\frac{5}{2}i}}{(q;q)_i(q^4;q^4)_j}
        =
        \frac{(-q^{-1};q^2)_m}{(q^2;q^2)_m}+q^{-2}\frac{(-q^{-1};q^2)_{m-1}}{(q^2;q^2)_{m-1}} . 
    \end{align}
    We have
    \begin{align}
        &F(u;q):=\sum_{i,j\ge 0}\frac{q^{\frac{3}{2}i^2+4ij+4j^2-\frac{5}{2}i}u^{i+2j}}{(q;q)_i(q^4;q^4)_j}
        =
        \sum_{m\ge 0}q^{m^2}u^m
        \sum_{i+2j=m}\frac{q^{\frac{1}{2}i^2-\frac{5}{2}i}}{(q;q)_i(q^4;q^4)_j}
   \nonumber     \\
        &=1+\sum_{m\ge 1}q^{m^2}u^m\left(\frac{(-q^{-1};q^2)_m}{(q^2;q^2)_m}+q^{-2}\frac{(-q^{-1};q^2)_{m-1}}{(q^2;q^2)_{m-1}}\right)
    \nonumber    \\
        &=\sum_{m\ge 0} \frac{q^{m^2}u^m(-q^{-1};q^2)_m}{(q^2;q^2)_m}+\sum_{m \ge 0} \frac{q^{(m+1)^2-2}u^{m+1}(-q^{-1};q^2)_m}{(q^2;q^2)_m}
    \nonumber    \\
        &=\sum_{m\ge 0}\frac{(1+q^{2m-1}u)q^{m^2}u^m(-q^{-1};q^2)_m}{(q^2;q^2)_m} . \label{ex13-F1}
    \end{align}
    Setting $u=1$, we have by \eqref{ex13-F1} and \eqref{S36} that
    \begin{align*}
        &F(1;q)=\sum_{m\ge 0}\frac{(1+q^{2m-1})q^{m^2}(-q^{-1};q^2)_m}{(q^2;q^2)_m}\notag
        \\
        &=(1+q^{-1})\sum_{m\ge 0}\frac{q^{m^2}(-q;q^2)_m}{(q^2;q^2)_m}=q^{-1}(1+q)\frac{1}{(q,q^4,q^7;q^8)_\infty}. \qedhere
    \end{align*}
\end{proof}

The following cases are dual to \eqref{new-exam-13},  and they share the same $A,d$ with Example 14: 
\begin{align}
    A= \begin{pmatrix}
        1 & -1/2\\
        -2 & 3/2
    \end{pmatrix}, \quad
    b \in \bigg\{
    \begin{pmatrix}
        -1/2 \\ 2
    \end{pmatrix},
    \begin{pmatrix}
        -3/2 \\ 4
    \end{pmatrix} ,
    \begin{pmatrix}
        -5/2 \\ 5
    \end{pmatrix} 
    \bigg\} , \quad
    d=(1,4) .
\end{align}
\begin{theorem}\label{thm-new-exam14}
    We have 
    \begin{align}
        &\sum_{i,j \ge 0} \frac{q^{\frac{1}{2}i^2-2ij+3j^2-\frac{1}{2}i+2j}}{(q;q)_i(q^4;q^4)_j}
        =
        2\frac{(-q;q)_\infty}{(q^2,q^3,q^7;q^8)_\infty}, \label{14-2-1}
        \\
        &\sum_{i,j\ge 0}\frac{q^{\frac{1}{2}i^2-2ij+3j^2-\frac{3}{2}i+4j}}{(q;q)_i(q^4;q^4)_j}
        =
        2q^{-1}\frac{(-q;q)_\infty}{(q,q^5,q^6;q^8)_\infty}, \label{14-2-2}
        \\
        &\sum_{i,j\ge 0}\frac{q^{\frac{1}{2}i^2-2ij+3j^2-\frac{5}{2}i+5j}}{(q;q)_i(q^4;q^4)_j}
        =
        2q^{-3}(1+q)\frac{(-q;q)_\infty}{(q,q^4,q^7;q^8)_\infty}. \label{14-2-3}
    \end{align}
\end{theorem}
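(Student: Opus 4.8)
The plan is to prove all three identities at once from a single two-parameter generating function. Define
$$F(u,v)=F(u,v;q):=\sum_{i,j\ge 0}\frac{q^{\frac12 i^2-2ij+3j^2}u^iv^j}{(q;q)_i(q^4;q^4)_j},$$
sum over $i$ first, and then specialize $(u,v)$ to the three pairs $(q^{-1/2},q^2)$, $(q^{-3/2},q^4)$, $(q^{-5/2},q^5)$, which reproduce the linear terms $-\tfrac12 i+2j$, $-\tfrac32 i+4j$, $-\tfrac52 i+5j$ of \eqref{14-2-1}--\eqref{14-2-3}. Writing $q^{\frac12 i^2}=q^{\binom{i}{2}}q^{i/2}$ and applying the second identity in \eqref{euler} to the inner sum over $i$ gives
$$F(u,v)=\sum_{j\ge 0}\frac{q^{3j^2}v^j}{(q^4;q^4)_j}\,(-uq^{\frac12-2j};q)_\infty.$$
This mirrors the sum-over-$i$ reduction used for Examples 10 and 12, and is the natural dual counterpart to the coefficient-extraction argument in Theorem \ref{thm-new-exam13}: since the quadratic form here has a negative cross term and does not collapse to $(i+2j)^2$, summing the variable carrying $(q;q)_i$ is the right move.

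The next step is, for each specialization, to peel the $j$-independent tail off the infinite product from the finitely many factors carrying negative powers of $q$. For $u=q^{-1/2}$ one has $(-q^{-2j};q)_\infty=2(-q;q)_\infty\,(-q^{-2j};q)_{2j}$, and $(-q^{-2j};q)_{2j}=q^{-(2j^2+j)}(-q;q)_{2j}$ flips the negative exponents to positive ones at the cost of an explicit power of $q$; the factor $2(-q;q)_\infty$ comes from $(-1;q)_\infty$ and supplies the constant $2$ and the product $(-q;q)_\infty$ appearing on every right-hand side. The surviving finite $q$-factorials are then reconciled through the elementary splittings $(-q;q)_{2j}=(-q;q^2)_j(-q^2;q^2)_j$ and $(q^4;q^4)_j=(q^2;q^2)_j(-q^2;q^2)_j$, which cancel $(-q^2;q^2)_j$ and collapse the double sum to a single sum in $j$ whose summand involves only $(-q;q^2)_j$ (or the shifted $(-q;q^2)_{j+1}$) over $(q^2;q^2)_j$. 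For the first identity the resulting single sum is exactly $\sum_{j\ge0}q^{j(j+1)}(-q;q^2)_j/(q^2;q^2)_j$, so \eqref{14-2-1} follows immediately from \eqref{Go2.24}.

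For the remaining two identities the same reduction produces sums carrying the shifted symbol $(-q;q^2)_{j+1}$ (and, for \eqref{14-2-3}, an additional factor $1+q^{2j+2}$ coming from $(-q;q)_{2j+2}$). Using $(-q;q^2)_{j+1}=(1+q^{2j+1})(-q;q^2)_j$ one splits each residual sum into Göllnitz--Gordon-type pieces, which should assemble to $\frac{1}{(q,q^5,q^6;q^8)_\infty}$ for \eqref{14-2-2} and to $\frac{1+q}{(q,q^4,q^7;q^8)_\infty}$ for \eqref{14-2-3}; the latter product is the one in \eqref{S36}, consistent with the fact that \eqref{14-2-3} is dual to \eqref{ex7-1-1}, which also reduced to \eqref{S36}. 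After gathering the prefactors $2(-q;q)_\infty$, $2q^{-1}(-q;q)_\infty$ and $2q^{-3}(1+q)(-q;q)_\infty$, the three identities drop out.

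I expect the single genuine obstacle to be the identification of the residual single sums in the second and third cases. Unlike the first case, the companion sums involving $(-q;q^2)_{j+1}$ (together with the even-shift factor in the third) do not appear verbatim among the listed Rogers--Ramanujan type evaluations, so one must either verify that the $(1+q^{2j+1})$- and $(1+q^{2j+2})$-splittings resolve into known Göllnitz--Gordon evaluations or, failing a clean match, establish the required evaluation directly in the style of Lemma \ref{lem-RR-id} by writing it as a limiting ${}_3\phi_2$, applying the transformation \eqref{eq-3phi2}, and finishing with the $q$-Kummer (Bailey--Daum) summation \eqref{eq-BD}. Everything else is routine bookkeeping of powers of $q$.
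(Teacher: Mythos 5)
Your setup coincides exactly with the paper's proof: the same generating function $F(u,v)$, the same application of \eqref{euler} to the inner sum over $i$, the same peeling and reversal of the finite product, and your treatment of \eqref{14-2-1} is complete and identical to the paper's (reduce to $\sum_{j\ge 0}q^{j(j+1)}(-q;q^2)_j/(q^2;q^2)_j$ and quote \eqref{Go2.24}). The genuine gap is in \eqref{14-2-2} and \eqref{14-2-3}: you stop precisely at the step that must be closed, and the specific splitting you propose does not close it. Expanding via $(-q;q^2)_{j+1}=(1+q^{2j+1})(-q;q^2)_j$ turns your residual sum for \eqref{14-2-2} into
\begin{align*}
\sum_{j\ge 0}\frac{q^{j^2+j}(-q;q^2)_j}{(q^2;q^2)_j}+q\sum_{j\ge 0}\frac{q^{j^2+3j}(-q;q^2)_j}{(q^2;q^2)_j};
\end{align*}
the first piece is \eqref{Go2.24}, but the second, with the shift $3j$, is not among the evaluations collected in Section \ref{sec-pre} and does not by itself have an evident infinite-product evaluation, so this route stalls. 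The same happens for \eqref{14-2-3}, where expanding $(1+q^{2j+2})(1+q^{2j+1})$ produces, besides \eqref{S36}-type pieces, a sum with shift $4j$ that again has no clean evaluation.

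The missing idea is to pull out the \emph{bottom} factor of the shifted symbol rather than the top one: $(-q;q^2)_{j+1}=(1+q)(-q^3;q^2)_j$. With this, the residual sum for \eqref{14-2-2} equals $(1+q)\sum_{j\ge 0}q^{j^2+j}(-q^3;q^2)_j/(q^2;q^2)_j$, a verbatim instance of the Lebesgue identity \eqref{Lebesgue} with $q\mapsto q^2$ and $a=-q^3$; it evaluates to $(1+q)(-q^5;q^4)_\infty(-q^2;q^2)_\infty=(-q;q^4)_\infty(-q^2;q^2)_\infty$, which rearranges to $1/(q,q^5,q^6;q^8)_\infty$ as required. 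For \eqref{14-2-3} one further trick is needed, and it is what the paper does: after the same factorization, split the factor $1+q^{2j+2}$ into two sums, shift $j\mapsto j-1$ in the second, and recombine into a single sum; the net factor is $1+q(1-q^{2j})/(1+q^{2j+1})=(1+q)/(1+q^{2j+1})$, and since $(-q^3;q^2)_j/(1+q^{2j+1})=(-q;q^2)_j/(1+q)$ the summand collapses to that of \eqref{S36}, finishing the proof. Your fallback of establishing the needed evaluations from scratch via a limiting ${}_3\phi_2$, the transformation \eqref{eq-3phi2} and the Bailey--Daum summation \eqref{eq-BD} is plausible in the style of Lemma \ref{lem-RR-id}, but it is not carried out; as written, two of the three identities remain unproved.
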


\begin{proof}
    We have
    \begin{align}
        &F(u,v)=F(u,v;q):=\sum_{i,j\ge 0}\frac{q^{\frac{1}{2}i^2-2ij+3j^2}u^iv^j}{(q;q)_i(q^4;q^4)_j}
        =\sum_{j\ge 0}\frac{q^{3j^2}v^j}{(q^4;q^4)_j}\sum_{i\ge 0}\frac{q^{\frac{1}{2}(i^2-i)}(uq^{\frac{1}{2}-2j})^i}{(q;q)_i}
    \nonumber    \\
        &=\sum_{j\ge 0}\frac{q^{3j^2}v^j(-uq^{\frac{1}{2}-2j};q)_\infty}{(q^4;q^4)_j}
        =(-uq^{\frac{1}{2}};q)_\infty\sum_{j\ge 0}\frac{q^{3j^2}v^j(-uq^{\frac{1}{2}-2j};q)_{2j}}{(q^4;q^4)_j} \nonumber \\
        &=(-uq^{\frac{1}{2}};q)_\infty\sum_{j\ge 0}\frac{q^{j^2}u^{2j}v^j(-u^{-1}q^\frac{1}{2};q)_{2j}}{(q^4;q^4)_j} .
        \label{F14-2}
    \end{align}    
    Setting $(u,v) =(q^{-\frac{1}{2}},q^2)$, we have by \eqref{F14-2} and \eqref{Go2.24} that
    \begin{align*}
        &F(q^{-\frac{1}{2}},q^{2})=(-1;q)_\infty\sum_{j\ge 0}\frac{q^{j^2+j}(-q;q)_{2j}}{(q^4;q^4)_j}
        =2(-q;q)_\infty\sum_{j\ge 0}\frac{q^{j(j+1)}(-q;q^2)_j(-q^2;q^2)_j}{(-q^2;q^2)_j(q^2;q^2)_j}\\
        &=2(-q;q)_\infty\sum_{j\ge 0}\frac{q^{j(j+1)}(-q;q^2)_j}{(q^2;q^2)_j}
        =2\frac{(-q;q)_\infty}{(q^2,q^3,q^7;q^8)_\infty} .
    \end{align*}
    This proves \eqref{14-2-1} .

    Setting $(u,v)=(q^{-\frac{3}{2}},q^4)$, we have by \eqref{F14-2} and \eqref{Lebesgue} that
    \begin{align*}
        &F(q^{-\frac{3}{2}},q^4)=(-q^{-1};q)_\infty\sum_{j\ge 0}\frac{q^{j^2+j}(-q^2;q)_{2j}}{(q^4;q^4)_j}
        \\
        &=
        (-q^{-1};q)_\infty\sum_{j\ge 0}\frac{q^{j^2+j}(-q^2;q^2)_j(-q^3;q^2)_j}{(-q^2;q^2)_j(q^2;q^2)_j}
        =(-q^{-1};q)_\infty\sum_{j\ge 0}\frac{q^{j^2+j}(-q^3;q^2)_j}{(q^2;q^2)_j}\\
        &=(-q^{-1};q)_\infty(-q^5;q^4)_\infty(-q^2;q^2)_\infty
        =2q^{-1}\frac{(-q;q)_\infty}{(q,q^5,q^6;q^8)_\infty} .
    \end{align*}
    This proves \eqref{14-2-2} .

    Setting $(u,v)=(q^{-\frac{5}{2}},q^5)$, we have
    \begin{align*}
        &F(q^{-\frac{5}{2}},q^5)= (-q^{-2};q)_\infty\sum_{j\ge 0}\frac{q^{j^2}(-q^3;q)_{2j}}{(q^4;q^4)_j}
        =q^{-2}(-q^{-1};q)_\infty\sum_{j\ge 0}\frac{q^{j^2}(-q^2;q)_{2j+1}}{(q^4;q^4)_j} \nonumber \\
        &=q^{-2}(-q^{-1};q)_\infty\sum_{j\ge 0}\frac{(1+q^{2j+2})q^{j^2}(-q^3;q^2)_j}{(q^2;q^2)_j}
        \\
    &=q^{-2}(-q^{-1};q)_\infty\left(\sum_{j=0}^\infty \frac{q^{j^2}(-q^3;q^2)_j}{(q^2;q^2)_j}+q\sum_{j=0}^\infty \frac{q^{(j+1)^2}(-q^3;q^2)_j}{(q^2;q^2)_j} \right)\nonumber \\
    &=q^{-2}(-q^{-1};q)_\infty\left(\sum_{j=0}^\infty \frac{q^{j^2}(-q^3;q^2)_j}{(q^2;q^2)_j}+q\sum_{j=1}^\infty \frac{q^{j^2}(-q^3;q^2)_{j-1}}{(q^2;q^2)_{j-1}}\right) \nonumber \\
    &=q^{-2}(-q^{-1};q)_\infty\sum_{j=0}^\infty  \frac{q^{j^2}(-q^3;q^2)_j}{(q^2;q^2)_j} \left(1+\frac{q(1-q^{2j})}{1+q^{2j+1}}\right) \nonumber \\
    &=2q^{-3}(1+q)(-q;q)_\infty\sum_{j=0}^\infty \frac{q^{j^2}(-q;q^2)_j}{(q^2;q^2)_j}      =2q^{-3}(1+q)\frac{(-q;q)_\infty}{(q,q^4,q^7;q^8)_\infty}.  \quad \text{(by \eqref{S36})}
    \end{align*}
This proves \eqref{14-2-3}.
\end{proof}

\subsection*{Acknowledgements}
This work was supported by the National Natural Science Foundation of China (12171375).

\end{document}